\newtheorem{theo}{Theorem}[section]
\newtheorem{lemma}[theo]{Lemma}
\newtheorem{coro}[theo]{Corollary}
\newtheorem{conj}[theo]{Conjecture}
\newtheorem{claim}[theo]{Claim}
\newcommand{\eps}{{\varepsilon}}
\newcommand{\vv}{\mathbf{v}}
\newcommand{\uu}{\mathbf{u}}
\newcommand{\yy}{\mathbf{y}}
\newcommand{\zz}{\mathbf{z}}
\renewenvironment{proof}[1][\proofname]{\par
  \pushQED{\qed}%
  \normalfont \topsep6\p@\@plus6\p@\relax
  \noindent\textbf{#1}\@addpunct{.}\quad\ignorespaces
}{%
  \popQED\endtrivlist\@endpefalse
}
\begin{document}
\date{}

\title{
The limit points of the top and bottom eigenvalues of regular graphs}

\author{Noga Alon
\thanks{Department of Mathematics, Princeton University,
Princeton, NJ 08544, USA and
Schools of Mathematics and
Computer Science, Tel Aviv University, Tel Aviv 6997801,
Israel.
Email: {\tt nalon@math.princeton.edu}.
Research supported in part by
NSF grant DMS-2154082 and USA-Israel BSF grant 2018267.}
\and
Fan Wei
\thanks{
Department of Mathematics, Duke University, Durhan, NC 27708.
Email: {\tt fan.wei@duke.edu}.
Research supported by NSF Award
DMS-1953958.}
}

\maketitle
\begin{abstract}
We prove that for each $d \geq 3$ 
the set of all limit points of the second largest
eigenvalue of growing sequences of $d$-regular graphs is
$[2 \sqrt{d-1},d]$. A similar argument shows that the  set of all
limit points of the smallest eigenvalue of growing sequences
of $d$-regular graphs with growing (odd) girth is 
$[-d, -2 \sqrt{d-1}]$.  The more general question of identifying
all vectors which are limit points of the vectors of the top
$k$ eigenvalues of sequences of $d$-regular graphs is considered
as well. As a by product, 
in the study of  discrete counterpart of the “scarring”
phenomenon observed in the investigation of quantum ergodicity on manifolds, 
our technique provides a method to construct $d$-regular 
almost Ramanujan graphs with large girth and localized 
eigenvectors corresponding to eigenvalues larger 
than $2\sqrt{d-1}$, strengthening a result of Alon, Ganguly, and Srivastava \cite{AGS}. 
\end{abstract}

\section{Introduction}
For a graph $G$ on $n$ vertices, let $\lambda_1(G) \geq \lambda_2(G)
\ldots \geq \lambda_n(G)$ denote the ordered set of its eigenvalues.
If $G$ is $d$-regular then $\lambda_1(G)=d$ and the Alon-Boppana bound
(\cite{Al}, \cite{Ni}, \cite{Ni1}) asserts that
$\lambda_2(G) \geq 2\sqrt{d-1}(1-O(1/\log^2 n))=(1-o(1))2\sqrt{d-1}$.
Therefore, any limit point of the values of $\lambda_2(G_i)$
for an infinite sequence $G_i$ of $d$-regular graphs is at least
$2\sqrt{d-1}$, and the existence of near-Ramanujan graphs for every degree
$d$ proved in \cite{Fr} (see also \cite{Bo}, \cite{MSS}) implies that
$2\sqrt{d-1}$ is such a limit point. Are all other points in
the interval $[2\sqrt{d-1},d]$ also obtained as such limit points?
This question was suggested to us by Peter Sarnak. 
It can be viewed as a variation of the  inverse spectral problem, whose analogue for hyperbolic surfaces 
 was recently proved using the conformal bootstrap method  \cite{KMP}. 

Our first result 
in this paper is a short proof that this is indeed the case.
\begin{theo}
\label{t11}
Let $A_2(d)$ denote the set of all limit points of sequences
$\lambda_2(G_i)$ where $G_i$ is an infinite sequence of $d$-regular
graphs, then for every $d \geq 3$, $A_2(d)=[2 \sqrt{d-1},d]$.
\end{theo}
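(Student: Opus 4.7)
The inclusion $A_2(d) \subseteq [2\sqrt{d-1}, d]$ is immediate: the Alon--Boppana bound in the introduction gives the lower end, and $\lambda_2 \leq \lambda_1 = d$ gives the upper end. The endpoint $2\sqrt{d-1}$ lies in $A_2(d)$ via any sequence of near-Ramanujan $d$-regular graphs \cite{Fr, MSS}, and the endpoint $d$ lies in $A_2(d)$ via disjoint unions of two $d$-regular graphs (where $d$ has multiplicity at least two). It remains to realize each interior $\lambda \in (2\sqrt{d-1}, d)$ as a limit point, and my plan is to interpolate between these two extremes by a two-cluster construction in which the connection strength tunes $\lambda_2$ to $\lambda$.

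Given $\lambda \in (2\sqrt{d-1}, d)$, set $\tau = (d - \lambda)/4$. For each large $m$, let $R_m$ be a near-Ramanujan $d$-regular graph on $m$ vertices, take two vertex-disjoint copies $H_1, H_2$ of $R_m$, and form $G_m$ by performing $k = \lfloor \tau m\rfloor$ cross-cluster edge-switches (each switch removes one edge from each $H_i$ and inserts two cross edges); the result is a $d$-regular graph on $2m$ vertices. The normalized antisymmetric vector $\phi_- = (2m)^{-1/2}(\mathbf{1}_{H_1} - \mathbf{1}_{H_2})$ is orthogonal to the top eigenvector $\phi_+ = (2m)^{-1/2}\mathbf{1}$, and a direct edge-count gives
\[
\langle \phi_-, A_{G_m}\phi_-\rangle = \frac{1}{2m}\bigl[2(md/2 - k) + 2(md/2 - k) - 2 \cdot 2k\bigr] = d - 4k/m \to \lambda,
\]
so the Rayleigh lower bound gives $\lambda_2(G_m) \geq d - 4k/m \to \lambda$.

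For the matching upper bound I plan to apply Courant--Fischer in the form $\lambda_2(G_m) = \max_{v \perp \phi_+} \langle v, A_{G_m} v\rangle/\|v\|^2$. Decomposing $v = \alpha\phi_- + w$ with $w \perp \phi_\pm$, each restriction $w|_{H_i}$ is mean-zero on $H_i$, so the near-Ramanujan bound on $R_m$ controls the block-diagonal part of $\langle w, A_{G_m} w\rangle$ by $(2\sqrt{d-1}+o(1))\|w\|^2$, while the cluster part satisfies $\alpha^2 \langle \phi_-, A_{G_m} \phi_-\rangle = \alpha^2(d-4k/m)$. This would yield $\lambda_2(G_m) \leq \max(d - 4k/m, 2\sqrt{d-1}) + o(1) = \lambda + o(1)$ were it not for the cross-cluster contribution of the $2k$ switched edges, whose na\"ive operator-norm bound is only $O(1)$; controlling this is the main obstacle. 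I would address it by choosing the switched edges so that all $4k$ affected edges are pairwise at distance at least $g(m) \to \infty$ in $R_m$ (feasible since near-Ramanujan graphs of arbitrarily large girth exist), and by a trace/moment calculation showing that the perturbation matrix then acts as a direct sum of essentially independent rank-one blocks whose joint spectral effect on $\mathrm{span}(\phi_+, \phi_-)^{\perp}$ is $o(1)$. Combined with the Rayleigh lower bound this yields $\lambda_2(G_m) \to \lambda$, so $\lambda \in A_2(d)$.
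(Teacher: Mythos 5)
Your lower bound argument (the Rayleigh quotient of $\phi_-$ gives $\lambda_2(G_m) \geq d - 4k/m \to \lambda$) is clean and correct. However, the upper bound contains two genuine gaps, and the overall approach is different from the paper's in a way that makes the quantitative claim much harder.

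\textbf{Infeasible separation.} You propose choosing the $4k$ switched edges pairwise at distance $g(m)\to\infty$ in $R_m$. But $k=\lfloor\tau m\rfloor=\Theta(m)$, and in a $d$-regular graph on $m$ vertices the maximum number of edges at pairwise distance $\geq g$ is $O(m/d^{g/2})$. For $g(m)\to\infty$ this is $o(m)$, so the required configuration does not exist. The paper's analogous separation (picking a second swap edge far from the first) is feasible because only \emph{one} pair of far-apart edges is needed per step, not $\Theta(m)$ pairs simultaneously.

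\textbf{The perturbation is not small on $\mathrm{span}(\phi_\pm)^{\perp}$, and $\phi_-$ is not an approximate eigenvector.} Write $P = A_{G_m}-A_{H_1\cup H_2}$. Even with vertex-disjoint swaps, $P$ is a block-diagonal sum of $4\times4$ blocks each with eigenvalues $\pm2,0,0$, and the top eigenvector $(1,-1,1,-1)$ of each block is orthogonal to both $\phi_+$ and $\phi_-$; so $\|P\|_{\mathrm{op}}=2$ is attained on $\mathrm{span}(\phi_\pm)^{\perp}$, not $o(1)$ as your sketch asserts. More importantly, $\phi_-$ is not an eigenvector of $A_{G_m}$: one computes that $P\phi_-$ is $\pm 2/\sqrt{2m}$ on the $4k$ swap vertices, and its component $\xi$ orthogonal to both $\phi_+$ and $\phi_-$ has $\|\xi\|^2 = 8k/m - 16k^2/m^2 = \Theta(1)$ when $k=\Theta(m)$. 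Hence the off-diagonal coupling $\langle \phi_-, A_{G_m}w\rangle = \langle \xi, w\rangle$ can be of order $\Theta(\|w\|)$, and the $2\times 2$ principal submatrix in the $\phi_-,\hat\xi$ directions has top eigenvalue exceeding $d-4k/m$ by an amount that is not $o(1)$. Your decomposition $v=\alpha\phi_-+w$ silently drops this cross term, which is exactly where the argument needs work.

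\textbf{Comparison with the paper's route.} The paper also interpolates by swaps between a near-Ramanujan graph and a graph with $\lambda_2\approx d$, but it performs \emph{one swap at a time} and never attempts a closed-form value of $\lambda_2$. Instead, it proves (Claim~2.2 / Corollary~2.3) that because high-girth eigenvectors are delocalized ($\|x\|_\infty\leq 1/\sqrt r$), a single swap changes $\lambda_2$ by at most $8/r$, and then applies a discrete intermediate value argument: since the endpoints are near $2\sqrt{d-1}$ and near $d$ and the steps are small, some intermediate $G_i$ hits any target $\lambda$ to accuracy $O(1/r)$. This sidesteps both of the issues above: you need only two far-apart edges per step, and you never need to control the spectral effect of a $\Theta(1)$-norm perturbation. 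If you want to repair your two-cluster construction directly, you would have to replace the ``all switches at once'' step by a one-at-a-time process with the same delocalization control, at which point you have essentially reconstructed the paper's argument.
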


Note that trivially not every value in $A_2(d)$ can be achieved 
as $\lambda_2(G)$ for some finite $d$-regular graph $G$, as such a 
value needs to be a totally real algebraic integer (and also as the number
of finite graphs is only countable). 
Indeed, most values in $A_2(d)$ 
can only be achieved as limit points of sequences 
$\lambda_2(G_i)$ as in the theorem.

Our method implies a similar result for the set of all limit points
of the smallest eigenvalue of growing sequences of $d$-regular graphs,
provided the (odd) girth of these graphs tends to infinity. 
\begin{theo}
\label{t12}
Let $A_{s}(d)$ denote the set of all limit points of sequences
of the last (smallest) eigenvalue of growing sequences of
$d$-regular graphs in which the length of the shortest odd cycle
tends to infinity.
For every $d \geq 2$, $A_s(d)=[-d,-2 \sqrt{d-1}]$.
\end{theo}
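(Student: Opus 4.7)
The plan is to mirror the argument of Theorem \ref{t11}, with two modifications adapted to the bottom of the spectrum: the scarring near-eigenfunctions are taken with a negative radial ratio, and the odd girth of the host is required to grow so that the resulting alternating signs are globally consistent on the region that supports them.

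For the easy inclusion $A_s(d)\subseteq[-d,-2\sqrt{d-1}]$, the bound $\lambda_n(G)\ge -d$ is automatic for $d$-regular $G$. The matching upper bound $\lambda_n(G)\le -2\sqrt{d-1}+o(1)$ under growing odd girth is an Alon--Boppana-type statement proved by the standard test-function argument on the infinite $d$-regular tree, using a radial function of the form $(-1)^{d(v,v_0)}p(d(v,v_0))$ supported in a ball around a vertex $v_0$ whose radius is less than half the odd girth. The odd-girth assumption forces this ball to be bipartite, so the factor $(-1)^{d(v,v_0)}$ is well defined and the Rayleigh quotient of the truncated tree eigenfunction tends to $-2\sqrt{d-1}$.

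For the reverse inclusion, the endpoints $\mu=-d$ and $\mu=-2\sqrt{d-1}$ are realized by bipartite $d$-regular graphs (infinite odd girth, $\lambda_n\equiv -d$) and by large-girth Ramanujan graphs such as the Lubotzky--Phillips--Sarnak construction, respectively. For $\mu\in(-d,-2\sqrt{d-1})$ we adapt the scarring construction of Theorem \ref{t11}. For such $\mu$, the quadratic $(d-1)r^2-\mu r+1=0$ has a root $r_-$ with $-1/\sqrt{d-1}<r_-<0$, and the function $f(v)=r_-^{d(v,v_0)}$, truncated to a ball of radius $R$ around $v_0$, is a near-eigenfunction on the $d$-regular tree with eigenvalue $\mu$; it decays exponentially in $d(v,v_0)$ and alternates in sign with the parity of $d(v,v_0)$. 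This sign pattern is globally consistent on the ball precisely when the host graph has odd girth at least $2R+1$. Taking the host to be a non-bipartite near-Ramanujan $d$-regular graph whose odd girth tends to infinity keeps its remaining spectrum in $\{d\}\cup[-2\sqrt{d-1}+o(1),2\sqrt{d-1}+o(1)]$, and installing $f$ by the scarring gadget of Theorem \ref{t11} plants a single outlier eigenvalue tending to $\mu$.

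The main technical obstacle, as in Theorem \ref{t11}, is two-sided control of the planted eigenvalue. The Rayleigh quotient of $f$ yields $\lambda_n(G)\le\mu+o(1)$ immediately; for the matching lower bound, the residual $\|Af-\mu f\|=o(\|f\|)$ shows that $G$ has a true eigenvalue within $o(1)$ of $\mu$, and near-Ramanujanness of the host together with the locality of the scarring keep every other eigenvalue of $G$ in $\{d\}\cup[-2\sqrt{d-1}+o(1),2\sqrt{d-1}+o(1)]$, hence strictly above $\mu$. The extra work beyond Theorem \ref{t11} is essentially parity book-keeping: verifying that the gadget, the host, and the embedding together maintain an odd girth tending to infinity, which is routine once the corresponding girth estimates in Theorem \ref{t11} are in place.
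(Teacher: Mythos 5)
Your proposal departs from the paper's argument, which proves Theorem~\ref{t12} by reusing the swap-and-interpolate machinery of Theorem~\ref{t21}: start from a high-girth near-Ramanujan graph, fix a bipartition $(B,C)$, and at each step perform a swap that \emph{increases} the number of crossing edges, driving $\lambda_n$ from near $-2\sqrt{d-1}$ down toward $-d$ while Corollary~\ref{c23} (which applies verbatim to the bottom of the spectrum) bounds each step by $O(1/\log\log n)$. Odd-girth control is automatic there because the ordinary girth of every graph in the chain stays at least $c_3\log\log n$. By contrast you propose a plant-an-outlier construction; there is no ``scarring gadget of Theorem~\ref{t11}'' --- that theorem's proof is the swap argument, and the planting gadgets that do appear (Sections~4--5, Lemma~\ref{lem:patchingshort}, Lemma~\ref{lem:maingadget}) are built specifically to place a large \emph{positive} eigenvalue and control the top of the spectrum; you would need a new bottom-eigenvalue analogue, and you would also need to check that the patching itself (which splices a small nearly-bipartite piece into a non-bipartite Ramanujan host) keeps the odd girth growing, neither of which is addressed.

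There is also a concrete analytic gap in the planted near-eigenfunction. On the $d$-regular tree, $f(v)=r^{d(v,v_0)}$ with $(d-1)r^2-\mu r+1=0$ satisfies $(Af)(v)=\mu f(v)$ only for $v\neq v_0$; at the root, $(Af)(v_0)=dr$, and $dr-\mu=r-1/r=(r^2-1)/r$, a constant bounded away from $0$ for $\mu\in(-d,-2\sqrt{d-1})$. Since $\|f\|_2^2=1+\sum_k|S_k|r^{2k}$ is $O(1)$ when $|r|<1/\sqrt{d-1}$, the root defect contributes a constant fraction of the residual: $\|Af-\mu f\|$ is $\Theta(\|f\|)$, not $o(\|f\|)$, and the Rayleigh quotient of the truncated $f$ tends to $\mu+(dr-\mu)/(1+S)\neq\mu$. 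So neither the claim ``$\lambda_n(G)\le\mu+o(1)$ immediately'' nor the claim that $G$ has a true eigenvalue within $o(1)$ of $\mu$ follows from this $f$. To make a planting approach work one would need a genuine gadget \emph{graph} whose smallest eigenvalue is near $\mu$ (not a single radial test function with an uncorrected root defect), along with the bottom-spectrum patching lemma and the odd-girth bookkeeping noted above --- at which point the paper's direct swap interpolation is both simpler and already complete.
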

Without the assumption about the growing odd girth 
the set of limit points of the smallest eigenvalue 
is more complicated, contains isolated points, and is 
far from being fully understood. See \cite{Yu}
for the values of the first few largest points of this set.

We conjecture that the assertion of Theorem \ref{t11} can be extended
to determine the limit points of the vectors in $\mathbb{R}^k$ of the top $k$
eigenvalues, for any fixed $k$.
\begin{conj}
\label{c13}
For any $d \geq 3$  and any fixed $k$, the set of all limit points of 
the vectors
$$
(\lambda_1(G_i),\lambda_2(G_i), \ldots ,\lambda_k(G_i))
$$
for an infinite sequence  $G_i$ of $d$-regular graphs is exactly the
set 
$$
B(d,k)=\{(\mu_1,\mu_2, \ldots ,\mu_k):~d=\mu_1 \geq 
\mu_2 \geq \mu_3 \geq \ldots \geq \mu_k \geq 2\sqrt{d-1} \}.
$$
\end{conj}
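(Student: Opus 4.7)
The plan is to prove both directions of the asserted equality.

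For ``every limit point lies in $B(d,k)$,'' the standard Alon--Boppana trace argument, applied not just to $\lambda_2$ but to any fixed $\lambda_j$ (see the generalization due to Nilli and Friedman), gives $\lambda_j(G_n)\geq 2\sqrt{d-1}-o(1)$ for every fixed $j$ as $n\to\infty$. Combined with $\lambda_1\equiv d$ and the ordering $\lambda_1\geq\cdots\geq\lambda_k$, any accumulation point of the top-$k$ vector must lie in $B(d,k)$.

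For the reverse inclusion, I would extend the construction behind Theorem~\ref{t11}. Fix $k$ near-Ramanujan $d$-regular graphs $H_1,\ldots,H_k$ on $n$ vertices each (from \cite{Fr,MSS}), so that all non-trivial eigenvalues of each $H_j$ lie in $[-2\sqrt{d-1}-o(1),\,2\sqrt{d-1}+o(1)]$. Given any symmetric matrix $Q=(q_{ij})$ of size $k\times k$ with non-negative entries and row sums equal to $d$, form a $d$-regular graph $G_n$ on $kn$ vertices by deleting a $(d-q_{jj})$-regular subgraph inside each $H_j$ and inserting, for $i\neq j$, a $q_{ij}$-regular bipartite graph between $H_i$ and $H_j$. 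A bounded-rank perturbation argument on the block decomposition along the span of $\mathbf{1}_{H_1},\ldots,\mathbf{1}_{H_k}$ and its orthogonal complement shows that, as $n\to\infty$, the top $k$ eigenvalues of $G_n$ converge to the eigenvalues of $Q$, while the remaining eigenvalues stay in $[-2\sqrt{d-1},2\sqrt{d-1}]+o(1)$, inherited from the near-Ramanujan bulk of the $H_j$. The conjecture thereby reduces to the following inverse spectral problem: realize any target $(d,\mu_2,\ldots,\mu_k)\in B(d,k)$ as the spectrum of such a $Q$. The equal-eigenvalue case $\mu_2=\cdots=\mu_k=\mu$ is immediate via $Q=\mu I_k+\tfrac{d-\mu}{k}J_k$; degenerate cases with $\mu_2=\cdots=\mu_r=d$ are handled by disconnected constructions. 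Generic targets are to be handled by enlarging the construction to $m\geq k$ equal-sized groups and an $m\times m$ quotient whose top-$k$ eigenvalues are $(d,\mu_2,\ldots,\mu_k)$ while the remaining $m-k$ fall inside $[-2\sqrt{d-1},2\sqrt{d-1}]$ and merge into the Ramanujan bulk.

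I expect the main obstacle to be exactly this last step. The inverse eigenvalue problem for symmetric doubly stochastic matrices is subtle already for $k\geq 3$, with Karpelevich-type obstructions persisting even in the non-negative orthant; verifying that allowing $m\geq k$ together with the constraint $\mu_k\geq 2\sqrt{d-1}>0$ suffices to cover all of $B(d,k)$ is the chief technical challenge. A promising alternative that bypasses this global inverse step is the localized ``scarring'' construction alluded to in the abstract: attach $k-1$ pairwise distant, vertex-disjoint defects to a single high-girth near-Ramanujan host graph, each defect engineered as in \cite{AGS} and the proof of Theorem~\ref{t11} to produce one localized eigenvector with eigenvalue close to a prescribed $\mu_j$; the difficulty then shifts to verifying that the defects produce no spurious large eigenvalues and that their contributions effectively decouple on a common host.
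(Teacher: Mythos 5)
The statement you are attempting to prove is a \emph{conjecture}, not a theorem: the paper explicitly records, both after stating Conjecture~\ref{c13} and in the concluding remarks, that it remains open, and in fact that the authors ``have not been able to decide whether or not the set of these limit points contains every point of $B(d,k)$ even for $k=3$.'' Your easy direction (every limit point lies in $B(d,k)$) is exactly the argument the paper cites from \cite{Ci,Ni1}. For the hard direction the paper offers no proof; what it does prove are the weaker Theorem~\ref{t14} (graphs of maximum degree $d$, not $d$-regular), Theorem~\ref{t15} (regular, but with $\mu_k \geq 3\sqrt{d-1}$), and Theorem~\ref{thm:limit} (regular, with $\mu_k \geq 2\sqrt{d-1}+\tfrac{1}{\sqrt{d-1}}$).

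Your first route, the $k\times k$ quotient matrix $Q$ glued from near-Ramanujan blocks, does not close the gap, and you acknowledge this yourself: it requires realizing an arbitrary target $(d,\mu_2,\ldots,\mu_k)\in B(d,k)$ as the top of the spectrum of a non-negative symmetric matrix with constant row sum $d$ (and, in your construction, with integer entries, so the $q_{ij}$ determine regular bipartite graphs); this is an inverse eigenvalue problem you flag as ``the chief technical challenge'' but do not resolve, and the Karpelevich/Suleimanova-type obstructions you mention are genuine. Your ``promising alternative'' --- attaching pairwise distant defects to a near-Ramanujan host so that each contributes one localized eigenvector near a prescribed $\mu_j$ --- is in fact essentially the route the paper does pursue, via the $d$-augmentation $T^\ell(d)$ and the patching Lemma~\ref{lem:patchingshort}. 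But the very concern you raise, that the defects may produce spurious eigenvalues near $2\sqrt{d-1}$ which cannot be decoupled from the host's Ramanujan bulk, is exactly what limits the paper: the second eigenvalue of the gadget $T^\ell(d)\tilde G$ is controlled only down to $2\sqrt{d-1}+\tfrac{1}{\sqrt{d-1}}$ (Lemma~\ref{lem:maingadget}), so targets with $\mu_k$ close to the $2\sqrt{d-1}$ floor are out of reach. Neither of your two sketched routes overcomes this obstruction, and the conjecture remains open.
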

The fact that the above set of limit points is contained 
in $B(d,k)$ follows from the known result, observed by several
researchers, c.f., e.g., \cite{Ci, Ni1}, that for any
fixed $d$ and $k$, any
sufficiently large $d$-regular graph has at least $k$
eigenvalues which are at least $2\sqrt{d-1}-o(1)$. However, we have
not been able to decide whether or not 
the set of these limit points contains every point of
$B(d,k)$ even for $k=3$. On the other hand, we can prove the following,
showing that every point of $B(d,k)$ can be obtained as a limit
if we relax the regularity condition.
\begin{theo}
\label{t14}
For every $d \geq 3$ and every $k$, every point of $B(d,k)$ is a limit
point of a sequence of vectors 
$(\lambda_1(G_i),\lambda_2(G_i), \ldots ,\lambda_k(G_i))$
for an infinite sequence  $G_i$ of graphs with maximum
degree at most $d$.
\end{theo}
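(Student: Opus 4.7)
The plan is to construct, for each target $(\mu_1,\ldots,\mu_k)\in B(d,k)$, a sequence of graphs $G_i$ of maximum degree at most $d$ whose top $k$ eigenvalues approximate the target. The construction takes a disjoint union form
\begin{equation*}
G_i \;=\; R_i \;\sqcup\; H_i^{(2)} \;\sqcup\; \cdots \;\sqcup\; H_i^{(k)},
\end{equation*}
where $R_i$ is a large $d$-regular near-Ramanujan graph (Friedman's theorem), supplying the eigenvalue $\lambda_1(G_i)=d=\mu_1$, and each $H_i^{(j)}$ is a smaller ``gadget'' graph of maximum degree at most $d$ satisfying $\lambda_1(H_i^{(j)})\to\mu_j$ and $\lambda_\ell(H_i^{(j)})\le 2\sqrt{d-1}+o(1)$ for all $\ell\ge 2$. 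Since the spectrum of a disjoint union is the union of the spectra of its components, this forces the top $k$ eigenvalues of $G_i$ to converge to $(\mu_1,\mu_2,\ldots,\mu_k)$.

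For the key gadget construction, I would take $H_i^{(j)}$ to be an induced subgraph $\tilde R^{(j)}[V\setminus T_i^{(j)}]$ of a second large $d$-regular near-Ramanujan graph $\tilde R^{(j)}$. The bound on $\lambda_\ell$ for $\ell\ge 2$ is then immediate from Cauchy interlacing:
\begin{equation*}
\lambda_\ell\!\left(H_i^{(j)}\right) \;\le\; \lambda_\ell\!\left(\tilde R^{(j)}\right) \;\le\; 2\sqrt{d-1}+o(1).
\end{equation*}
The main work is tuning $\lambda_1(H_i^{(j)})$ close to $\mu_j$. For this, I would start from $T_i^{(j)}=\emptyset$ (where $\lambda_1=d$) and add vertices to $T_i^{(j)}$ one at a time, each time greedily choosing a vertex $v$ at which the current Perron eigenvector $\phi_1$ is no larger than average. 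A first-order perturbation argument shows that removing such a $v$ decreases $\lambda_1$ by an amount of order $\phi_1(v)^2\cdot(\lambda_1-\lambda_2)$, which is $o(1)$ as long as $\phi_1$ stays spread (no coordinate much larger than $1/\sqrt{|V|}$). The greedy rule is designed to preserve this spread through the sweep, so every single step changes $\lambda_1$ by $o(1)$, and by a discrete intermediate-value argument we can stop at a $t$ for which $\lambda_1(\tilde R^{(j)}[V\setminus T_t])\in(\mu_j-o(1),\mu_j+o(1))$.

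Assembly is then routine: $G_i$ inherits maximum degree at most $d$ from its components, and since all non-top eigenvalues of every component are at most $2\sqrt{d-1}+o(1)\le\mu_k+o(1)$, the top $k$ eigenvalues of $G_i$ are (to within $o(1)$) exactly $d$ together with $\lambda_1(H_i^{(j)})$ for $j=2,\ldots,k$, sorted in decreasing order, converging to $(\mu_1,\mu_2,\ldots,\mu_k)$, as required.

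The main obstacle I expect is making the ``spread-preservation'' claim in the sweep rigorous: starting from the constant Perron of $\tilde R^{(j)}$ and greedily removing vertices at which $\phi_1$ is small, one must show that the Perron of the shrinking induced subgraph continues to satisfy $\sup_v|\phi_1(v)|^2=O(1/|V|)$ throughout, even after deleting a constant fraction of the vertex set. If this analysis proves troublesome, a cleaner alternative is to replace the sweep by an explicit family of small gadgets---for instance, $K_{d+1}$ with one edge removed, with a pendant rooted $(d-1)$-ary tree of adjustable depth attached to one of the two degree-$(d-1)$ vertices---whose spectral radius can be computed from a transfer-matrix recurrence and shown, as the depth and the choice of attached piece vary, to form a set dense in $[2\sqrt{d-1},d]$.
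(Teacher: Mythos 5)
Your high-level architecture matches the paper's: form a disjoint union of $k$ (or $k-1$, with a separate $d$-regular block) "gadget" graphs, each a near-Ramanujan $d$-regular graph with some vertices deleted; use Cauchy interlacing to push all non-top eigenvalues of each gadget below $2\sqrt{d-1}+o(1)$; and tune the top eigenvalue of each gadget by deleting vertices one at a time so it sweeps continuously from $d$ down toward $2\sqrt{d-1}$. The paper (Lemma \ref{l24}) does exactly this, then takes the disjoint union in the proof of Theorem \ref{t14}.

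The genuine gap, which you flag yourself, is in the mechanism controlling the step size. Your proposal relies on a first-order perturbation estimate of the form "deleting a vertex $v$ changes $\lambda_1$ by about $\phi_1(v)^2\cdot(\lambda_1-\lambda_2)$," plus the claim that a greedy choice keeps the Perron eigenvector of the shrinking induced subgraph delocalized ($\|\phi_1\|_\infty^2 = O(1/|V|)$) throughout the sweep. Two problems: first, the estimate itself is off — a Rayleigh-quotient computation with the truncated eigenvector gives a one-sided bound of the form $\lambda_1(G-v)\ge \lambda_1(G)\,\frac{1-2\phi_1(v)^2}{1-\phi_1(v)^2}\approx\lambda_1(G)(1-\phi_1(v)^2)$, with no $\lambda_1-\lambda_2$ factor; second and more seriously, there is no a priori reason the Perron vector stays spread after you delete a constant fraction of the vertices, and indeed once $\lambda_1$ approaches $2\sqrt{d-1}$ the Perron vector of an induced subgraph of a high-girth graph typically does localize. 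You would need a nontrivial argument here and you do not supply one.

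The paper sidesteps this entirely by replacing eigenvector control with trace control: it picks $\ell\approx(n-i)/2$ even, counts closed walks of length $\ell$, deletes a vertex contained in at most the average number of such walks (so the total walk count drops by at most a factor $2$), and uses the elementary bounds $(T/q)^{1/\ell}\le\lambda_1\le T^{1/\ell}$ to show $\lambda_1$ can drop by at most a multiplicative factor $1-O(\log(n-i)/(n-i))$ per step. This is purely combinatorial, requires no delocalization hypothesis, and works uniformly regardless of where $\lambda_1$ currently sits in $[2\sqrt{d-1},d]$. If you want to salvage your sweep, you should replace the perturbation/delocalization argument with a walk-counting argument of this kind; your "cleaner alternative" (explicit gadgets of $K_{d+1}$ minus an edge with a pendant $(d-1)$-ary tree of adjustable depth) is also viable and is in the spirit of the augmentation machinery the paper develops in Section 4, but as written it is only a sketch and would need the transfer-matrix density claim carried out.
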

Note that, of course, for sequences of 
graphs with maximum degree $d$ there are 
also limit points as above that lie outside the set $B(d,k)$.

For $d$-regular graphs we can prove that  Conjecture \ref{c13} is almost true, in the sense that 
every point of
$B(d,k) \cap [2\sqrt {d-1} +o_d(1),d]^k$ can be obtained as a limit point. To be more precise, we have the following theorem. 

\begin{theo}\label{thm:limit}
For every $d \geq 3$ and every $k$, every point of
$$
\{(\mu_1,\mu_2, \ldots ,\mu_k):~d=\mu_1 \geq 
\mu_2 \geq \mu_3 \geq \ldots \geq \mu_k \geq 2\sqrt{d-1} + \frac{1}{\sqrt{d-1}}\}.
$$ 
is  a limit
point of a sequence of vectors 
$(\lambda_1(G_i),\lambda_2(G_i), \ldots ,\lambda_k(G_i))$
for an infinite sequence  $G_i$ of $d$-regular graphs.
\end{theo}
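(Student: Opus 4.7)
The strategy is to extend the single-eigenvalue construction behind Theorem~\ref{t11} to plant $k-1$ localized eigenvectors simultaneously, one for each of $\mu_2,\ldots,\mu_k$.

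\textbf{Step 1 (Building blocks).} For each $\mu \in [2\sqrt{d-1}+1/\sqrt{d-1},\,d]$ and each radius $R$, I would extract from the (implicit) construction used in Theorem~\ref{t11} a small $d$-regular ``scar gadget'' $\Gamma(\mu,R)$ together with a distinguished vector $\phi_{\mu,R}$ supported on $\Gamma(\mu,R)$, exponentially decaying away from its center, whose Rayleigh quotient is $\mu + o_R(1)$. The lower bound $\mu \geq 2\sqrt{d-1}+1/\sqrt{d-1}$ is exactly the threshold at which such a bound state above the tree spectrum $[-2\sqrt{d-1},2\sqrt{d-1}]$ can be produced with decay rate sufficient to beat the $(d-1)^r$ growth of the spheres in the $d$-regular tree; this parallels the restriction in the statement of Theorem~\ref{thm:limit}.

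\textbf{Step 2 (Multi-planting).} Fix $R$ large, take a $d$-regular near-Ramanujan graph $H_N$ on $N$ vertices of girth $\gg R$ (for instance, from \cite{Fr}), and choose $k-1$ vertices $v_2,\ldots,v_k$ pairwise at distance $\geq 3R$ (possible for fixed $k$ once $N$ is large). Around each $v_i$ surgically implant a copy of $\Gamma(\mu_i,R)$; since each $R$-ball in $H_N$ is a piece of the $d$-regular tree, the implantation is a purely local modification that preserves $d$-regularity and connectivity. Denote the result by $G_N$.

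\textbf{Step 3 (Spectral analysis).} Being connected and $d$-regular, $G_N$ satisfies $\lambda_1(G_N) = d$. The $k-1$ scars are vertex-disjoint and pairwise far apart, so the extended scar vectors $\phi_i$ are mutually orthogonal and (after a tiny correction) orthogonal to the all-ones vector; on their span, each $\phi_i$ has Rayleigh quotient $\mu_i + o_R(1)$. The Courant--Fischer principle then gives
\[
\lambda_j(G_N) \geq \mu_j - o_R(1), \qquad 2 \leq j \leq k.
\]
For the matching upper bounds I would delete the $O(1)$ vertices forming the gadgets: the remaining graph is a tiny deletion of $H_N$, whose top non-trivial eigenvalue is at most $2\sqrt{d-1}+o_N(1)$. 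Combining eigenvalue interlacing with a scar-by-scar argument (keeping only the $i$-th scar and deleting the others) yields $\lambda_j(G_N) \leq \mu_j + o(1)$, and hence the convergence $(\lambda_2,\ldots,\lambda_k)(G_N) \to (\mu_2,\ldots,\mu_k)$ as $N,R \to \infty$.

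\textbf{Main obstacle.} The delicate point is Step 3: I must rule out that the planted eigenvectors \emph{hybridize} across scars and produce eigenvalues strictly between the target $\mu_i$'s. Quantum-mechanical tunneling heuristics predict hybridization amplitudes decaying like $(d-1)^{-\mathrm{dist}/2}$, so a pairwise scar separation of order $R$ should suppress the effect. Making this quantitative --- showing that exactly $k-1$ eigenvalues of $G_N$ converge to the prescribed $\mu_i$ in the correct order --- is the main technical burden, and the slack $1/\sqrt{d-1}$ is precisely what keeps each scar's eigenvalue safely separated from the bulk spectrum so that this eigenvalue counting is robust.
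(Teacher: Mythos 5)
The high-level strategy is the right one and parallels the paper, but there are three concrete gaps, any of which would need a substantial new idea to close.

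First, the ``surgery'' step. You posit $d$-regular gadgets $\Gamma(\mu,R)$ and claim you can ``surgically implant'' one into an $R$-ball of a near-Ramanujan $d$-regular graph $H_N$ as a purely local modification preserving $d$-regularity. But a finite $d$-regular gadget is a closed system with no boundary vertices of low degree, so it cannot be glued into a hole cut out of $H_N$ without disturbing degrees. The paper circumvents this by making the gadgets \emph{not} $d$-regular: a gadget is $T^{8R}(d)\tilde G$ for a sub-$d$-regular $\tilde G$, whose many attached $d$-ary trees supply leaves (degree-$1$ vertices) that get identified with the degree-$(d-1)$ vertices left after puncturing $F_0$ (Lemma \ref{lem:patchingshort}). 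This tree-augmentation gluing is the mechanism that makes $d$-regularity of the final graph possible, and it is also where most of the spectral control lives (via Lemma \ref{lem:gap} and Lemma \ref{lem:treemiddlelayer}); omitting it is not a detail but a load-bearing component.

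Second, your interpretation of the threshold $2\sqrt{d-1}+1/\sqrt{d-1}$ as ``the eigenvalue at which the bound state's decay beats the $(d-1)^r$ sphere growth'' is incorrect. On the infinite $d$-regular tree, the radial decay of the bound state at eigenvalue $\mu>2\sqrt{d-1}$ is $\rho^r$ with $\rho=\frac{\mu-\sqrt{\mu^2-4(d-1)}}{2(d-1)}$, and $(d-1)\rho^2<1$ already holds for every $\mu>2\sqrt{d-1}$: the $\ell^2$ bound state exists for the whole interval. In the paper the $1/\sqrt{d-1}$ slack is an artifact of the specific gadget construction: the gadget is built by deleting vertices from an $N$-lift of $K_{d+1}$ and attaching $d$-ary trees, and Lemma \ref{lem:gap}(3) together with the estimate (\ref{eq:expand}) bounds the gadget's second eigenvalue by $2\sqrt{d-1}+\frac{1}{\sqrt{d-1}}+\epsilon/2$; that is the source of the constant, not any decay-versus-growth competition.

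Third, the upper bound argument has the interlacing running in the wrong direction. If you delete a set $S$ of $|S|=M$ vertices (the other scars, say), Cauchy interlacing gives $\lambda_j(G_N - S) \geq \lambda_{j+M}(G_N)$, which bounds $\lambda_{j+M}(G_N)$ from above, not $\lambda_j(G_N)$. Since the scar gadgets are not $O(1)$ in size (their size must grow with $R$ to approach the prescribed $\mu$), this does not control the top $k$ eigenvalues. The paper avoids this via Claim \ref{c92}: it bounds $\lambda_i(G)$ from above by comparing Rayleigh quotients against the cut matrix $C$, using the nonlocalization estimate of Lemma \ref{lem:treemiddlelayer} to control the eigenvector mass near the cut ($\sum_{u\in X}\vv_j(u)^2\leq 1/R$), rather than by deleting vertices. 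Some argument of this flavor — a quantitative bound on how much the eigenvectors can concentrate on the interface — seems unavoidable; the quantum-tunneling heuristic you invoke is pointing at the right phenomenon but would need to be replaced by a rigorous version.
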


A byproduct of the technique is the following theorem, which constructs $d$-regular almost Ramanujan graphs $G$ with large girth, while ensuring the presence of a localized eigenvector corresponding to an eigenvalue strictly greater than $2\sqrt{d-1}$. This in some way strengthens a result of Alon, Ganguly and 
Srivastava \cite{AGS}, who showed the existence of large girth $d$-regular graphs $G$ with $\lambda_2(G)\leq 2.121\sqrt{d}$ and localized eigenvectors with eigenvalues in $(-2\sqrt{d}, 2\sqrt{d})$. \footnote{However their localized eigenvectors are completely supported on a small set.} Earlier research on 
constructing $d$-regular graphs with localized eigenvectors can be found in 
\cite{GS}, but the graphs produced there are not expanding.

This line of research involving constructions of high girth (and expanding) 
graphs that exhibit localized eigenvectors can be viewed as a discrete version of the ``scarring" phenomenon observed 
in the study of quantum ergodicity on manifolds. 
More about this topic can be found in 
\cite{BL}, \cite{AM}, \cite{BML}, \cite{GS}, and the references therein.

\begin{theo}\label{thm:vectormain}
For any real numbers $\beta > 0$ and $C \geq 4$, and  any positive integer $d = p+1$ where $p \equiv 1 \mod 4$ is a prime, 
there are infinitely many $d$-regular  graphs $G$ satisfying the following properties simultaneously:
\begin{enumerate}
    \item The second largest eigenvalue  is at most $ 2\sqrt{d-1} + \beta$,
    \item The girth is at least
    $\frac{1}{5C}\log_d |V(G)|$, and 
    \item There is a vertex set $S$ with $|S| \leq |V(G)|^{2/\sqrt{C}}$ 
and an  eigenvalue strictly larger than $2\sqrt{d-1}$ whose 
eigenvector $\vv$ satisfies $\sum_{u \in S} \vv(u)^2 
\geq (1-\beta)\|\vv\|_2^2$. 
\end{enumerate} 
\end{theo}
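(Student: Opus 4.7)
The plan is to perform a local surgery on a Lubotzky--Phillips--Sarnak (LPS) Ramanujan graph, implanting a small ``defect'' gadget that produces an eigenvalue strictly above $2\sqrt{d-1}$ whose accompanying eigenvector is forced to localize by the exponential decay of such eigenfunctions on the $d$-regular tree. Let $G_0 = X^{p,q}$ be an LPS graph of degree $d = p+1$ on $n_0$ vertices, with $q$ ranging over an infinite family of primes; then $\lambda_2(G_0) \leq 2\sqrt{d-1}$ and $\mathrm{girth}(G_0) \geq (4/3+o(1))\log_p n_0$. Pick a vertex $v_0$ and a radius $r$ well below half the girth, so that the ball $B_r(v_0)$ is a $d$-regular tree. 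Cut out a subtree in $B_r(v_0)$ and graft in its place a carefully designed finite gadget $H$ of maximum degree $d$, arranging the boundary so that the resulting graph $G$ remains $d$-regular and introduces no short cycles, yielding $\mathrm{girth}(G) \geq \frac{1}{5C}\log_d|V(G)|$. The gadget $H$ is built to carry an eigenvalue $\mu \in (2\sqrt{d-1},\, 2\sqrt{d-1}+\beta]$; one natural realization is a finite tree-like piece augmented by a single additional short cycle, tuned so that its top non-trivial eigenvalue lies in the desired window --- essentially the same interpolation device that drives the proof of Theorem~\ref{t11}.

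The bound $\lambda_2(G) \leq 2\sqrt{d-1}+\beta$ follows from a trace-moment calculation: because of the large girth, $\mathrm{tr}(A_G^{2t})$ matches the $d$-regular tree value up to $O(|V(H)|(d-1)^t)$, so only $O(|V(H)|)$ eigenvalues of $G$ can exceed $2\sqrt{d-1}+\beta$, and these exceptional eigenvalues are controlled directly by the spectrum of $H$. For the localization, any eigenvector $\vv$ of $G$ with eigenvalue $\mu > 2\sqrt{d-1}$ must, on the tree-like complement of $H$, be a decaying solution of the radial recursion; its decay rate is $\rho(\mu) = \frac{\mu - \sqrt{\mu^2-4(d-1)}}{2(d-1)} < 1/\sqrt{d-1}$ per step away from $H$. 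Since $\rho^2(d-1) < 1$, the $\ell^2$-mass of $\vv$ outside a ball of radius $r_0$ around $H$ is $O\bigl((\rho^2(d-1))^{r_0}\bigr)$, which is $\leq \beta\|\vv\|_2^2$ once $r_0$ is sufficiently large (depending on $\beta$ and $d$); taking $S$ to be this ball gives $|S| \leq d^{r_0} \leq |V(G)|^{2/\sqrt{C}}$ for the right choice of parameters.

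The main obstacle is to design $H$ so that its contribution creates an eigenvalue \emph{strictly above} $2\sqrt{d-1}$ (rather than merely approaching it) while remaining within the narrow window $(2\sqrt{d-1},\, 2\sqrt{d-1}+\beta]$, without introducing other eigenvalues crossing $2\sqrt{d-1}+\beta$ elsewhere in the spectrum and while preserving both $d$-regularity and the global girth. Tightly balancing the gadget size, the decay constant $\rho(\mu)$, and the required ball radius $r_0$ against the parameters $\beta$ and $C$ --- in particular, ensuring that the ball of radius $r_0$ around $H$ sits inside the tree-like region guaranteed by the girth and that the defect does not leak spectral mass past the threshold --- is the crux of the argument.
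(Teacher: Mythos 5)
Your overall strategy — implant a small defect in an LPS Ramanujan base so that the defect carries an eigenvalue just above $2\sqrt{d-1}$, then argue localization via behavior on the locally tree-like region — is in the same spirit as the paper, but the core of your localization argument has a genuine gap, and the spectral bound is underjustified.

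\textbf{The localization step.} You claim that any eigenvector $\vv$ with eigenvalue $\mu>2\sqrt{d-1}$ ``must, on the tree-like complement of $H$, be a decaying solution of the radial recursion,'' with decay rate $\rho(\mu)=\tfrac{\mu-\sqrt{\mu^2-4(d-1)}}{2(d-1)}$. This is not automatic. On a half-infinite tree, a \emph{bounded} solution with $\mu>2\sqrt{d-1}$ must decay, but here the tree-like structure only persists to radius $O(\mathrm{girth})$; beyond that, $G$ is the Ramanujan bulk. Within the tree-like annulus the eigenvector is a linear combination of the decaying mode $\rho_-^r$ and the growing mode $\rho_+^r$ (with $\rho_+\rho_- = \tfrac{1}{d-1}$, so $\rho_+^2(d-1)>1$), and the eigenvalue equation alone does not force the growing-mode amplitude to vanish. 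If it is not exponentially small, the $\ell^2$-mass concentrates far from $H$ rather than near it, and the localization conclusion fails. Ruling out the growing mode is precisely the crux of the theorem, and your proposal does not provide a mechanism for it. The paper sidesteps this entirely: it never uses pointwise exponential decay. Instead it decomposes the Rayleigh quotient $\vv^T A \vv$ into contributions from the gadget $F_1$, the boundary stars (whose mass is bounded via the nonlocalization estimate of Lemma~\ref{lem:treemiddlelayer}), and the Ramanujan bulk (whose contribution is at most $2\sqrt{d-1}$ times its mass since $\vv\perp\mathbf 1$ and $\lambda_2(F_0)\le 2\sqrt{d-1}$). Since $\vv^T A\vv = \lambda_2(F)\ge \mu_1 > 2\sqrt{d-1}$, rearranging forces the bulk mass to be small. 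This is an energy/interlacing argument, not a decay argument, and it is what makes the growing mode a non-issue.

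\textbf{The spectral bound.} A trace-moment comparison controls the \emph{number} of eigenvalues exceeding $2\sqrt{d-1}+\beta$, but it does not show that number is zero (beyond $\lambda_1$), and ``controlled directly by the spectrum of $H$'' is doing all the work without an argument. The paper instead proves a quantitative patching lemma (Lemma~\ref{lem:patchingshort}) bounding $|\lambda_i(G)-\mu_{i-1}|$ via a rank-one-like perturbation estimate, again using the $\ell^\infty$ bound from the high girth. Your plan also differs structurally in the surgery itself: you excise a ball from $F_0$ and graft a gadget into the hole, whereas the paper removes widely-spaced vertices from $F_0$ to create degree-$(d-1)$ vertices and identifies these with the leaves of an augmented gadget $T^\ell(d)\tilde G$. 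The latter keeps both pieces as induced subgraphs of the final graph, which is what makes the interlacing clean. Your construction could plausibly be made to work, but as written it would need an additional argument to control what the excision does to the bulk spectrum.
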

Our construction of these graphs is explicit. Note that we do not make any 
serious attempt to optimize the constants in the bounds for the girth
and for the exponent of $|V(G)|$ in the bound for $|S|$.
With a slightly more careful analysis, it is possible
to prove $\sum_{u \in S} \vv(u)^2 \geq (1-o(1))\|\vv\|_2^2$ where the 
$o(1)$ term here tends to $0$ as $|V(G)|$ tends to infinity 
(where $\beta,C,d$ are fixed).
It is also worth noting that a similar result also holds for 
any $d \geq 3$, with the bound on the girth being 
$\Omega(\log \log |V(G)|)$. The construction can again be made explicit by 
a deterministic $\text{poly}(|V(G)|)$-time algorithm, combining our
arguments with the construction in \cite{MDP}. 

The rest of this paper is organized as follows. 
In the next section we describe
the proof of the basic results: Theorem \ref{t11} and Theorem \ref{t12}.
Theorem \ref{t14} and several extensions are proved in Section 3. 
In Section 4  we describe the proof of Theorem \ref{thm:limit}.
Near Ramanujan graphs with localized eigenvectors are
constructed in Section  5 and the final Section 6 contains some 
concluding remarks. 

\section{Proofs of the basic results}

\subsection{The second eigenvalue}
We start with the proof of
Theorem \ref{t11} in the following stronger form.
\begin{theo}
\label{t21}
For every $d \geq 2$, every even integer $n  > d$  and every 
real $\lambda \in [2\sqrt{d-1},d]$ there is a $d$-regular graph $G$ 
on $n$ vertices satisfying  $|\lambda_2(G)-\lambda| \leq 
\frac{c}{\log \log n}$, where $c=c(d)>0$.
\end{theo}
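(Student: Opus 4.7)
My plan is to construct $G$ by gluing a near-Ramanujan ``bulk'' graph to a small ``gadget'' whose top eigenvalue matches the target $\lambda$. Let $H$ be a $d$-regular near-Ramanujan graph on most of the $n$ vertices (available from Friedman's theorem or the Marcus--Spielman--Srivastava construction), so $\lambda_2(H)\le 2\sqrt{d-1}+o_n(1)$. Separately, for each $\lambda\in[2\sqrt{d-1},d]$ I would produce a graph $\Gamma_\lambda$ of maximum degree at most $d$ whose Perron eigenvalue is within $c/\log\log n$ of $\lambda$: for example $K_{d+1}$ for $\lambda=d$, a deep rooted $d$-ary tree for $\lambda=2\sqrt{d-1}$, and some parametrized family (e.g.\ two $K_{d+1}$'s joined by a path of length $t$, or a path of length $t$ attached to a small clique) for intermediate $\lambda$, where varying $t$ continuously slides the Perron eigenvalue through $[2\sqrt{d-1},d]$. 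Then attach $\Gamma_\lambda$ to $H$ by a bounded number of edges so that $G$ is $d$-regular on exactly $n$ vertices.

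For the lower bound on $\lambda_2(G)$, I would extend the Perron eigenvector of $\Gamma_\lambda$ by zero to a test vector on $V(G)$ and subtract its mean to make it orthogonal to the constant vector; its Rayleigh quotient is within $O(1/\log\log n)$ of the Perron eigenvalue of $\Gamma_\lambda$, hence of $\lambda$, so $\lambda_2(G)\ge\lambda-O(1/\log\log n)$. For the matching upper bound, note that up to a bounded-rank perturbation $G$ decomposes as $H\sqcup\Gamma_\lambda$, whose spectrum is the union of the two spectra; near-Ramanujan $H$ contributes only $d$ and values below $2\sqrt{d-1}+o(1)$, while $\Gamma_\lambda$ contributes one eigenvalue close to $\lambda$ and lower ones. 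A Cauchy/Weyl interlacing argument then pins $\lambda_2(G)$ within $O(1/\log\log n)$ of $\lambda$.

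The main steps in order would be: (1) build the gadget family $\{\Gamma_\lambda\}$ and prove its Perron eigenvalues are dense in $[2\sqrt{d-1},d]$ with the claimed rate; (2) describe the attachment, paying attention to maintaining $d$-regularity and picking attachment vertices in $H$ that avoid short cycles or other local obstructions; (3) carry out the Rayleigh-quotient lower bound and the interlacing-based upper bound on $\lambda_2(G)$.

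The main obstacle I expect is step (1): producing a sufficiently fine explicit family of max-degree-$d$ graphs whose Perron eigenvalues densely cover $[2\sqrt{d-1},d]$. Since eigenvalues of a fixed small graph form a discrete set of algebraic integers, the gadget size $s$ must grow with $n$ (the rate $1/\log\log n$ is consistent with $s$ polylogarithmic in $n$), and the interpolating family must be analyzed enough to show the Perron eigenvalue is a continuous and surjective function of the parameter onto the desired interval. A secondary technical point is step (3): I need interlacing under the bounded-rank attachment to ensure that the eigenvalue produced by $\Gamma_\lambda$ is exactly $\lambda_2(G)$ and not pushed down by larger eigenvalues from $H$; this is the regime $\lambda$ close to $2\sqrt{d-1}$, where the constructed eigenvalue is closest to the Alon--Boppana bulk and the most care is required.
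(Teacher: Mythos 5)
Your approach is genuinely different from the paper's. The paper proves this theorem by a swap argument: start from a near-Ramanujan $d$-regular graph of girth $\Theta(\log\log n)$, fix a balanced bisection of the vertex set, and repeatedly swap two crossing edges for two non-crossing ones. Because in a graph of girth at least $2r-1$ any normalized eigenvector with eigenvalue at least $2\sqrt{d-1}$ has $\ell_\infty$-norm at most $1/\sqrt{r}$, each swap moves $\lambda_2$ by $O(1/r)=O(1/\log\log n)$; the process terminates when the bisection width drops to $O(\sqrt{n})$, forcing $\lambda_2\geq d-O(1/\sqrt{n})$, and a discrete intermediate-value argument then captures every $\lambda\in[2\sqrt{d-1},d]$. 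Your gadget-plus-bulk plan is closer in spirit to the patching machinery the paper develops only later, for the multi-eigenvalue results.

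The central gap in your plan is the upper bound on $\lambda_2(G)$. You assert that because $G$ is a bounded-rank perturbation of $H\sqcup\Gamma_\lambda$, ``Cauchy/Weyl interlacing'' pins $\lambda_2(G)$ within $O(1/\log\log n)$ of $\lambda$. That is not correct: adding a single edge is already a rank-two, operator-norm-one perturbation, so Weyl's inequality gives only $|\lambda_2(G)-\lambda_2(H\sqcup\Gamma_\lambda)|\le 1$, and index-shift interlacing gives $\lambda_{2+r}\le\lambda_2(G)\le\lambda_{2-r}$, which is vacuous once $r\ge 2$; no rank- or norm-based inequality yields a bound that shrinks with $n$. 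The paper controls such perturbations through the $\ell_\infty$-delocalization of high-girth eigenvectors noted above, combined with a Rayleigh-quotient estimate, but girth and delocalization appear nowhere in your outline, and without them the upper-bound step collapses. A secondary problem is that the gadget family you sketch does not work as described: two copies of $K_{d+1}$ joined by a path produce vertices of degree $d+1$, and worse, $K_{d+1}$ as an induced subgraph forces the Perron eigenvalue to be at least $d$, so that family never enters the interior of $[2\sqrt{d-1},d]$. Producing a max-degree-at-most-$d$ gadget family whose Perron eigenvalues sweep $[2\sqrt{d-1},d]$ with $O(1/\log\log n)$ spacing, while also satisfying the degree and girth bookkeeping required to attach it and stay $d$-regular, is a substantial task that the paper carries out (for its later theorems) via a vertex-deletion and $d$-ary-tree-augmentation interpolation, again driven by the same delocalization lemma you are missing.
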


\begin{proof}
The proof is based on the fact that eigenvectors of high-girth
$d$-regular graphs are nonlocalized. This enables us to start
from a near Ramanujan $d$-regular 
graph of high girth and apply to it local changes
(swaps),
transforming it to a graph with a very small bisection width
which has second eigenvalue close to $d$. This is done while maintaining
the high girth. The nonlocalized nature of the eigenvectors is used
to show that in each swap the second eigenvalue can change only by
a small amount. A more precise description follows.
 
We prove that there are constants 
$c_1,c_2,c_3,c_4>0$ (depending on $d$) and 
a (finite) sequence
of $d$-regular graphs $G_0,G_1, \ldots ,G_t$, each being a graph
on the set $V=\{v_1,v_2, \ldots ,v_n\}$ of $n$ labeled vertices,
that satisfy the following properties.
\begin{enumerate}
\item
$\lambda_2(G_0) \leq 2\sqrt{d-1} +c_1(\frac{\log \log n}{\log n})^2.$
\item
$\lambda_2(G_t) \geq d-\frac{c_2}{\sqrt n}.$
\item
The girth of each $G_i$ is at least $c_3 \log \log n$.
\item
For every $0 \leq i <t$, if either $\lambda_2(G_i)$ or $\lambda_2(G_{i+1})$
exceeds $2 \sqrt{d-1}$ then
$$
|\lambda_2(G_{i+1})-\lambda_2(G_i)| \leq \frac{c_4}{\log \log n}.
$$
\end{enumerate}
This clearly implies the assertion of Theorem \ref{t21}.  The existence
of $G_0$ satisfying properties (1) and (3) follows from the result of
Friedman \cite{Fr} and the known facts about the girth of random
regular graphs. Indeed, Friedman proved that for every $0<a<1$
there is some $c_1=c_1(a,d)$ so that
with probability at least
$1-n^{-a}$ a random $d$-regular graph $G_0$ on $n$ vertices satisfies
(1). By the known results about the 
distribution of short cycles in random $d$-regular graphs
(see \cite{MWW}) the girth of $G_0$ exceeds some 
$c_3 \log \log n$ with probability exceeding $1/\sqrt n$.
Taking $a=1/2$ in Friedman's result it follows that with positive
probability $G_0$ satisfies (1) and (3). Fix such a graph 
$G_0$.

For every $i \geq 0$, the graph $G_{i+1}$ will be constructed 
from $G_i$ by a single {\em swap}, that is, by deleting two
vertex disjoint edges $v_1v_2$ and $u_1u_2$
and by adding the edges $v_1u_1$ and $v_2u_2$ instead, keeping
the graph $d$-regular. We need the following result.
\begin{claim}
\label{cl22}
Let $G=(V,E)$ be a graph with maximum degree $d$ and with girth at least
$2r$, and let $x$ be an eigenvector of $G$ with 
$\ell_2$ norm $\|x\|_2=1$
corresponding 
to an eigenvalue $\mu$ with absolute value at least
$2 \sqrt{d-1}$. Then $\|x\|_{\infty} \leq 1/\sqrt r.$
\end{claim}
This is an immediate consequence of Lemma 3.2 of
\cite{Al1} where it is shown that if $uv$ is any edge 
in  such a graph, and $N_i$ is the set of all vertices of distance
exactly $i$ from $N_0=\{u,v\}$, $(0 \leq i \leq r-1)$, then for 
every $0<i \leq r-1$, $\sum_{v \in N_i} x_v^2 \geq \sum_{u \in N_{i-1}}
x_u^2$. Thus $x_u^2+x_v^2 \leq 1/r$. A slight extension of this lemma
for bounded degree (not necessarily regular) 
graphs is proved as Lemma \ref{lem:treemiddlelayer} below.
Note that a similar statement for regular graphs with
a slightly worse quantitative estimate can be derived from the
results in \cite{GS}, even without any assumption on the eigenvalue
$\mu$. There are, however, simple examples showing that this more
general statement does not hold for general bounded degree graphs.

\begin{coro}
\label{c23}
If $G$ and $H$ are two $d$-regular graphs, each
having girth at least $2r$, and one of them is obtained from the other
by a swap, and if either $\lambda_2(G)$ or $\lambda_2(H)$ exceeds
$2 \sqrt{d-1}$, then $|\lambda_2(G)-\lambda_2(H)| \leq 8/r$.
\end{coro}
Indeed, without loss of generality $\lambda_2(G) \geq \lambda_2(H)$
and $\lambda_2(G) \geq 2\sqrt{d-1}$. Let $\vv$ be a normalized eigenvector
of $\lambda_2(G)$. Then it is orthogonal to the constant vector,
and $\vv^T A_G \vv =\lambda_2(G)$, where $A_G$ is the adjacency matrix
of $G$. Since $H$ is obtained from $G$ by a single swap,
$\vv^T A_G \vv-\vv^T A_H \vv$ is a sum and difference of at most $8$ terms 
of the form $\vv(u)\vv(v)$. Here $A_H$ is the adjacency matrix of $H$. Since $\|\vv\|_{\infty} \leq 1/\sqrt r$ each
such term has absolute value at most $1/r$.  It follows that
$\vv^T A_H \vv \geq \lambda_2(G)-8/r$ and by the variational 
definition of $\lambda_2(H)$ this implies that
$\lambda_2(H) \geq \lambda_2(G)-8/r$, as needed.

Starting with $G_0$ satisfying (1) and (3) split its vertex
set arbitrarily into two sets of equal cardinality $B$ and $C$.
Suppose we have already defined $G_0,G_1, \ldots ,G_i$ so that
(3) and (4) hold, and suppose that $G_i$ still has more than
$\sqrt n$ crossing edges, that is,
edges with endpoints in $B$ and in $C$. We show how to define
$G_{i+1}$ and decrease the number of these crossing edges by $2$.
Let $v_1v_2$ be an arbitrary crossing edge of $G_i$, with
$v_1 \in B, v_2 \in C$. The number of
edges of $G_i$ whose distance from the edge $v_1v_2$ is at most
$2r-1$ is at most $1+2(d-1)+2(d-1)^2 + \cdots +2 (d-1)^{2r-1}
<2d^{2r}$. If $r$ is smaller than $(1/4)\log(n/4)/\log d $ this number is
smaller than $\sqrt n$, and hence there is at least one additional
crossing edge $u_1u_2$ of $G_i$ with $u_1 \in B,u_2 \in C$.
Let $G_{i+1}$ be the graph obtained from $G_i$ by the swap that
removes the edges $v_1v_2, u_1u_2$ and adds the edges
$u_1v_1$, $u_2v_2$. Since any cycle of $G_{i+1}$ that is not a cycle
of $G_i$ must contain at least one
path in $G_i-\{v_1v_2,u_1u_2\}$ that connects
two distinct vertices among these four, its length is at least the
minimum between $1+(2r-1)$ and the girth of $G_i$ (in fact, twice
this girth plus $2$, but this is not crucial here). Therefore
$G_{i+1}$ also satisfies (3). By Corollary \ref{c23} 
condition (4) also holds for $i$.
Since each graph $G_{i+1}$ in the process has less crossing edges
than the previous graph  $G_i$ the process must terminate with a graph
$G_t$ in which the number of crossing edges is at most $\sqrt n$.
Let $\vv'$ be the vector assigning $1/\sqrt n$ to each vertex of
$B$ and $-1/\sqrt n$ to  each vertex of $C$. Then
$\vv'^T A_{G_t} \vv' \geq (dn-4\sqrt n)/n=d-4/\sqrt n$. Since
$\|\vv'\|_2^2=1$ and its sum of coordinates in $0$, this implies
that $\lambda_2(G_t) \geq d-4/\sqrt n$, showing that condition
(2) holds. This completes the proof.
\end{proof}

\subsection{The smallest eigenvalue}

\begin{proof}[Proof of Theorem \ref{t12}]
The proof is very similar to that of Theorem \ref{t11}, we thus
only include a brief description.  The fact that $A_s(d)$ is contained
in the closed interval $[-d, -2\sqrt{d-1}]$ follows from the result of
Li \cite{Li} (see also \cite{Ni1}) that asserts that the smallest 
eigenvalue of any $d$-regular graph in which the length of the 
shortest odd cycle is $r$, is at most $-2\sqrt{d-1}(1-O(1/r^2)).$
In order to show that every point of this interval indeed lies
in $A_s(d)$ we construct, for every even integer $n$, a sequence
$G_0,G_1, \ldots ,G_t$ of graphs on a set $V$ of $n$ vertices
that satisfy the following, where $c_1,c_2,c_3,c_4$ are positive
constants depending only on $d$.
\begin{enumerate}
\item
$\lambda_n(G_0) \geq -2\sqrt{d-1} -c_1(\frac{\log \log n}{\log n})^2.$
\item
$\lambda_n(G_t) \leq -d+\frac{c_2}{\sqrt n}.$
\item
The girth of each $G_i$ is at least $c_3 \log \log n$.
\item
For every $0 \leq i <t$, if the absolute value of 
either $\lambda_n(G_i)$ or $\lambda_n(G_{i+1})$
exceeds $2 \sqrt{d-1}$ then\,
$|\lambda_n(G_{i+1})-\lambda_n(G_i)| 
\leq \frac{c_4}{\log \log n}$.
\end{enumerate}
This clearly suffices to complete the proof of the theorem.
As in the previous proof, the existence
of $G_0$ satisfying properties (1) and (3) follows from the results of
\cite{Fr} and \cite{MWW}.  Assuming we have already constructed
$G_0, G_1, \ldots G_i$ satisfying (3) and (4), we obtain
$G_{i+1}$ 
from $G_i$ by a single swap. Split the set of vertices
of $G_0$ into two sets $B,C$, each of size $n/2$. As long as the number
of edges 
in the induced subgraph of $G_i$ 
on $B$ is at least, say, $\sqrt n$, so is the
the number of edges in the induced subgraph of $G_i$ on $C$.  Indeed,
these two numbers are equal since the sum of degrees in the induced
subgraph on $B$ is $|B|d-e(B,C)$ where $e(B,C)$ is the number of edges
connecting $B$ and $C$, and the sum of degrees in the induced subgraph on
$C$ is $|C|d-e(B,C)$.
As in the
previous proof, there are two edges $v_1v_2$ and $u_1u_2$ of $G_i$, where 
$v_1,v_2 \in B$, $u_1,u_2 \in C$ and the distance in $G_i$ between these
two edges is at least $\Omega(\log n/ \log d)$. Swapping these edges and 
replacing them by the two crossing edges $v_1u_1$ and $v_2u_2$ we obtain
a graph $G_{i+1}$ with girth satisfying condition (3). As the
assertion of Corollary
\ref{c23} clearly holds for the smallest eigenvalues too,
condition (4) also holds for  $i$.
Since each graph $G_{i+1}$ in the process has 2 more crossing edges
than the previous graph  $G_i$, the process must terminate with a graph
$G_t$ in which the number of non-crossing edges is at most $2\sqrt n$.
As in the previous proof, let 
$\vv'$ be the vector assigning $1/\sqrt n$ to each vertex of
$B$ and $-1/\sqrt n$ to  each vertex of $C$. Then
$\vv'^T A_{G_t} \vv'\leq (-dn+8\sqrt n)/n=-d+8/\sqrt n$. Since
$\|\vv'\|_2^2=1$, this implies
that $\lambda_n(G_t) \leq -d+8/\sqrt n$, showing that condition
(2) holds. This completes the proof of the theorem.
\end{proof}

\section{The top eigenvalues of bounded degree graphs}

\subsection{Bounded degree graphs}

In this subsection we prove Theorem \ref{t14}. To do so we establish
the following lemma.
\begin{lemma}
\label{l24}
For every $d \geq 2$ and every even integer $n$, and for every
real $\lambda \in [2\sqrt{d-1},d]$ there is a graph $G=G(n,\lambda)$ 
with maximum degree at most $d$, whose number of vertices is between
$\sqrt n$ and $n$, satisfying 
\begin{enumerate}
\item
$|\lambda_1(G) -\lambda| \leq 2 \frac{d\log n}{\sqrt n}$
\item
$\lambda_2(G) \leq 2\sqrt{d-1}$.
\end{enumerate}
\end{lemma}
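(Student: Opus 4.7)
The plan is to realize $G$ as an induced subgraph of a $d$-regular Ramanujan graph $H$ on $n$ vertices. Such an $H$ exists for every $d \geq 2$ and every even $n$ (via the Lubotzky-Phillips-Sarnak construction when $d-1$ is a prime power, or via the bipartite Ramanujan graphs of Marcus-Spielman-Srivastava in general), and satisfies $\lambda_2(H) \leq 2\sqrt{d-1}$. By Cauchy's interlacing theorem, every induced subgraph $G \subseteq H$ automatically inherits $\lambda_2(G) \leq \lambda_2(H) \leq 2\sqrt{d-1}$, so condition (2) comes for free, and only condition (1) needs to be arranged by a careful choice of subgraph.

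For condition (1) I would build a nested chain $H = H_0 \supsetneq H_1 \supsetneq \cdots \supsetneq H_T$ of induced subgraphs, each obtained from the previous by deleting a single vertex. By interlacing, $\lambda_1(H_i)$ is non-increasing in $i$; it starts at $\lambda_1(H_0)=d$, and since any tree with maximum degree $d$ has spectral radius at most $2\sqrt{d-1}$ (by the standard Rayleigh-quotient comparison with the infinite $d$-regular tree), the process can in principle be continued until the surviving subgraph is a spanning tree, at which stage $\lambda_1(H_T) \leq 2\sqrt{d-1}$. For any target $\lambda \in [2\sqrt{d-1},d]$, the first index $i$ with $\lambda_1(H_i) \leq \lambda$ then satisfies
\[
|\lambda_1(H_i) - \lambda| \;\leq\; \lambda_1(H_{i-1}) - \lambda_1(H_i),
\]
reducing the precision requirement of (1) to a uniform bound on each single-step drop.

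The key quantitative step is to bound this drop via the top unit eigenvector $\vv$ of $H_i$. Using the test vector $\vv - \vv(v) e_v$ (renormalized) in the Rayleigh quotient for $A_{H_i - v}$, and using that the adjacency matrix has zero diagonal, one gets
\[
\lambda_1(H_i) - \lambda_1(H_i - v) \;\leq\; \frac{\lambda_1(H_i)\,\vv(v)^2}{1 - \vv(v)^2}.
\]
Since $\sum_v \vv(v)^2 = 1$, averaging guarantees a vertex with $\vv(v)^2 \leq 1/|V(H_i)|$, and deleting such a vertex drops $\lambda_1$ by at most $O(d/|V(H_i)|) \leq O(d/\sqrt n)$ whenever $|V(H_i)| \geq \sqrt n$. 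My deletion strategy is thus two-phase: first aggressively delete vertices of relatively large $|\vv|$ to bring $\lambda_1$ near $\lambda$, then switch to deleting vertices of smallest $|\vv|$ so that each further step moves $\lambda_1$ by at most $O(d/\sqrt n)$, landing inside the prescribed window.

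The hard part will be making the two phases compatible while staying inside the vertex budget $n - \sqrt n$: when $\lambda$ lies near $2\sqrt{d-1}$ we must drop $\lambda_1$ by nearly $d - 2\sqrt{d-1}$, and the small-$|\vv|$ steps must carry us across the target window with room to spare. The harmonic-type accounting $\sum_{m=\sqrt n}^{n} O(d/m) = O(d \log n)$ for the cumulative slow-drop budget both certifies that enough progress is available and explains the $\log n$ factor in the stated precision; tracking how the eigenvector $\vv_i$ evolves across the aggressive phase (where the crude Rayleigh bound can be lossy) is the main technical burden in matching the two phases quantitatively.
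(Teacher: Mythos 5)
Your overall architecture matches the paper's: start from a bipartite Ramanujan graph $H_0$ on $n$ vertices, build a nested chain of induced subgraphs by deleting one vertex at a time, keep $\lambda_2 \leq 2\sqrt{d-1}$ automatic by interlacing, and argue that the single-step drop in $\lambda_1$ is $O(d\log n/\sqrt n)$ as long as at least $\sqrt n$ vertices remain. Your per-step bound is even a valid alternative to the paper's: you delete a vertex $v$ with $\vv(v)^2 \leq 1/|V(H_i)|$ and use the test vector $\vv - \vv(v)e_v$ to get a drop of at most $\lambda_1 \vv(v)^2/(1-\vv(v)^2) = O(d/\sqrt n)$, whereas the paper counts closed walks of length $\approx |V(H_i)|/2$ and deletes a vertex in at most half of them, getting a multiplicative bound $1 - O(\log(n-i)/(n-i))$. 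Both are fine.

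The genuine gap is in the termination argument. You need to know that after $n - \sqrt n$ deletions the surviving induced subgraph has $\lambda_1$ at or near $2\sqrt{d-1}$; otherwise the chain never crosses the target $\lambda$ and the small-step bound is irrelevant. Your justification --- ``the process can in principle be continued until the surviving subgraph is a spanning tree'' --- does not work: the process must stop at $\sqrt n$ vertices to keep the step bound, and an induced subgraph of a $d$-regular graph on $\sqrt n$ vertices has no reason to be a forest. Moreover, neither of your two phases gives any \emph{lower} bound on the per-step drop (interlacing and the Rayleigh estimate only bound the drop from above), so ``aggressively deleting large-$|\vv|$ vertices'' does not rigorously force $\lambda_1$ down; it is entirely consistent with your bounds that $\lambda_1$ barely moves through both phases. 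You flag this as a technical burden, but it is not a matter of tightening a lossy estimate --- there is simply no decrease guarantee in the argument as written.

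The paper closes this gap differently, without any two-phase scheme: it fixes $t = n - \sqrt n$ and proves directly that $\lambda_1(G_t) \leq 2\sqrt{d-1} + d/\sqrt n$, using the Ramanujan property of $G_0$. Concretely, take the top unit eigenvector $f$ of the terminal subgraph $G_t$, extend it by zeros to $g$ on $V(G_0)$, decompose $g$ as a multiple of $\mathbf 1$ plus an orthogonal part $h$, and observe $\lambda_1(G_t) = g^T A_{G_0} g \leq \alpha^2 d + \lambda_2(G_0)\|h\|_2^2$ with $|\alpha| = |g^T\mathbf 1|/\sqrt n \leq n^{-1/4}$ by Cauchy--Schwarz because $g$ is supported on only $\sqrt n$ coordinates. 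This is the ``small support $\Rightarrow$ small overlap with the Perron vector'' argument (Alon--Spencer, Lemma~9.2.7), and it is exactly the piece your proposal is missing. Once you have it, a single phase of small-$|\vv|$ deletions suffices: $\lambda_1$ moves from $d$ to $\leq 2\sqrt{d-1}+d/\sqrt n$ in steps of size $O(d/\sqrt n)$, so some intermediate subgraph lands within the required window around $\lambda$.
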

\begin{proof}
To simplify the presentation we omit all floor and ceiling signs
whenever these are not crucial.
We show that there is a sequence of graphs
$G_0,G_1, \ldots ,G_t$, where $t=n-\sqrt n$, $G_i$ has exactly
$n-i$ vertices, so that
\begin{enumerate}
\item
$\lambda_1(G_0) =d$
\item
$\lambda_1(G_t) \leq 2\sqrt{d-1}+\frac{d}{\sqrt n}$
\item
For every $0 \leq i \leq t$, $\lambda_2(G_i) \leq 2 \sqrt{d-1}$.
\item
For every $0\leq i<t$:
$$
\lambda_{1}(G_{i})\left(1-3\frac{\log (n-i)}{n-i}\right) \leq \lambda_1(G_{i+1})
\leq \lambda_1(G_i).
$$
\end{enumerate}
This clearly implies the assertion of the lemma. A $d$-regular graph 
$G_0$ on
$n$ vertices satisfying (1) and (3) exists by the result of
Marcus, Spielman and Srivastava in \cite{MSS1}. Assuming we have already
defined $G_0,G_1, \ldots ,G_i$ satisfying (3) and (4), where
$G_j$  has $n-j$ vertices for all $j \leq i$ and where
$i+1<t$, we define $G_{i+1}$ as follows. Let $\ell$ be the largest
even integer that does not exceed $(n-i)/2$ and consider 
all closed walks of
length $\ell$ in $G_i$. By averaging, there is at least one vertex of 
$G_i$ contained in at most half of these walks. Choose arbitrarily such a 
vertex $v$ and let $G_{i+1}$ be the graph obtained from $G_i$ by removing
the vertex $v$. Thus $G_{i+1}$ has $n-i-1$ vertices. As it is an induced
subgraph of $G_i$ it satisfies condition (3) by eigenvalue interlacing
(see, e.g., \cite{Fi}). 
The same eigenvalue interlacing implies that $\lambda_1(G_{i+1})
\leq \lambda_1(G_i)$. In order to prove the other inequality
in condition (4) we use the following simple fact.
\vspace{0.1cm}

\noindent
{\bf Fact:}\, Let $H$ be a graph with $q$ vertices, and let $\ell$ 
any even positive integer. Let
$T=T(\ell)$ be the number of 
closed walks of length $\ell$ in $H$. Then
$\sum_{i=1}^q \lambda_i^{\ell}(H)=T$ and hence
$(T/q)^{1/\ell} \leq \lambda_1(H) \leq T^{1/\ell}$. 
\vspace{0.1cm}

\noindent
By the above fact applied to $G_i$, with $\ell$ being the largest
even integer that does not exceed $(n-i)/2$ and $T$ being the number
of closed walks of length $\ell$ in $G_i$, it follows that
$\lambda_1(G_i) \leq T^{1/\ell}$. Applying the fact to 
$G_{i+1}$ with the same $\ell$, and using the fact that the number
of closed walks of length $\ell$ in it is at least $T/2$, we conclude
that 
$$\lambda_1(G_{i+1}) \geq \left(\frac{T}{2(n-i-1)}\right)^{1/\ell} 
\geq T^{1/\ell} \cdot \left(1-\frac{ 3 \log (n-i)}{n-i}\right)
\geq 
\lambda_1(G_i) \cdot \left(1-\frac{ 3 \log (n-i)}{n-i}\right)
$$
where here we used the fact that $n-i$ is large and that 
$\ell$ is close to half of it. This shows that condition
(4) is maintained with $G_{i+1}$. 

It remains to prove that 
condition (2) holds. This 
follows from the argument in the first part of the proof of Lemma 9.2.7
in \cite{AS}. For completeness we sketch the proof.
Let $f$ be an eigenvector
corresponding to the largest eigenvalue of $G_t$. Let
$g$ be a vector defined on the vertex set of $G_0$, by
letting $g(v)=f(v)$ for every vertex $v \in V(G_t)$
and by defining $g(u)=0$ for all other vertices of $G_0$.
Expressing this vector $g$ as a linear combination of the 
all $1$-vector (which is the top eigenvector of 
$G_0$) and an orthogonal vector $h$, and estimating
$\lambda_1(G_t) = g^T A_{G_0} g$ using this expression and 
noting that $g^T 1 
\leq \|g\|_2 n^{1/4} $ and $ h^T A_{G_0} h \leq \lambda_2(G_0) \|h\|_2^2$, we get the 
required estimate in condition (2). This completes 
the proof of the lemma.  
\end{proof}

\vspace{0.2cm}

\begin{proof}[Proof of Theorem \ref{t14}]
Let $(\mu_1,\mu_2, \ldots ,\mu_k)$ be a vector satisfying
$d \geq \mu_1 \geq \mu_2 \ldots  \geq \mu_k \geq 2 \sqrt{d-1}$.
By Lemma \ref{l24}, for every $i$, $1 \leq i \leq k$ and for every
even integer $n$ there is a graph $G_j=G(n,\mu_j)$ 
with maximum degree at most $d$, whose number of vertices is between
$\sqrt n$ and $n$, satisfying 
$|\lambda_1(G_j) -\mu_j| \leq 2 \frac{d\log n}{\sqrt n}$
and 
$\lambda_2(G_j) \leq 2\sqrt{d-1}$.
Let $G(n)$ be the vertex 
disjoint union of the graphs $G_1,G_2, \ldots ,G_k$.
Then $|\lambda_i(G(n)) -\mu_i| \leq \frac{2d \log n}{\sqrt n}$
for all $1 \leq i \leq k $. Any sequence of such graphs $G(n)$
for a growing sequence of values of $n$ gives the required limit point
$(\mu_1,\mu_2, \ldots ,\mu_k)$. Note that if $\mu_1=d$ then the 
maximum degree of each graph $G(n)$ is exactly $d$.  
\end{proof}
\medskip

\noindent
The graphs constructed in the proof of Theorem \ref{t14} are
not connected. We can show, however, that the same set $B(k,d)$ of vectors
is obtained by similar limits of the corresponding vectors of 
top eigenvalues of {\em connected} graphs with maximum degree at most $d$.
This requires some additional ideas. The details follow.
\begin{theo}
\label{t25}
For every $d \geq 2$ and every $k$, every point of $B(d,k)$ is a limit
point of a sequence of vectors 
$(\lambda_1(G_i),\lambda_2(G_i), \ldots ,\lambda_k(G_i))$
for an infinite sequence  $G_i$ of {\em connected} graphs $G_i$ with maximum
degree at most $d$.
\end{theo}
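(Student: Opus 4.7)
The plan is to modify the disjoint-union construction $G_1 \sqcup \cdots \sqcup G_k$ from the proof of Theorem \ref{t14} into a \emph{connected} graph by attaching each $G_j$ (or rather its top connected component) to a common long auxiliary path via a single edge at a low-degree vertex. Although attaching pendant structures generally perturbs eigenvalues, Claim \ref{cl22} controls the perturbation by bounding the squared $\ell_\infty$ norm of the top eigenvectors by $1/r$ on a high-girth graph, giving a total shift of $o(1)$ across the $k$ attaching edges. (The case $d = 2$ is immediate since $B(2,k) = \{(2,\ldots,2)\}$ is realized by the long cycles $C_N$; assume $d \geq 3$ henceforth.)

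Apply Lemma \ref{l24} to produce graphs $G_j = G(n,\mu_j)$ for $j = 1, \ldots, k$, each with max degree $\leq d$, $|\lambda_1(G_j) - \mu_j| = o(1)$, $\lambda_2(G_j) \leq 2\sqrt{d-1}$, and, from its construction as an induced subgraph of a near-Ramanujan $d$-regular graph $G_0$, girth at least $r = c(d) \log \log n$. Let $G_j^*$ be the connected component of $G_j$ with largest eigenvalue; then $\lambda_1(G_j^*) = \lambda_1(G_j) \approx \mu_j$, $\lambda_2(G_j^*) \leq \lambda_2(G_j) \leq 2\sqrt{d-1}$, and $G_j^*$ has girth $\geq r$. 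Since $G_j^*$ is a proper induced subgraph of the connected $d$-regular graph $G_0$, some vertex $v_j \in V(G_j^*)$ has a neighbor in $V(G_0) \setminus V(G_j^*)$, so $\deg_{G_j^*}(v_j) \leq d-1$. Take a path $P$ on $kL$ new vertices $p_1, \ldots, p_{kL}$ and form $H'$ from the disjoint union $H := P \sqcup G_1^* \sqcup \cdots \sqcup G_k^*$ by adding the $k$ edges $v_j p_{jL}$ for $j = 1, \ldots, k$. Then $H'$ is connected, has maximum degree $\max(d, 3) = d$, and has girth still $\geq r$: any cycle must lie inside a single block of the block-tree decomposition, and the only blocks containing cycles are the $G_j^*$'s.

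To compare spectra, note that the top $k$ eigenvalues of $H$ are $\lambda_1(G_1^*), \ldots, \lambda_1(G_k^*) \approx \mu_1, \ldots, \mu_k$, since $\lambda_1(P) \leq 2 < 2\sqrt{d-1}$ and $\lambda_2(G_j^*) \leq 2\sqrt{d-1}$. By Claim \ref{cl22} applied to each $G_j^*$ and to $H'$ itself (both have max degree $\leq d$ and girth $\geq r$), any eigenvector corresponding to an eigenvalue of absolute value $\geq 2\sqrt{d-1}$ satisfies $\|\vv\|_\infty \leq 1/\sqrt r$. Hence for any unit vector $\vv$ in the span of the top $i$ eigenvectors of $H$ (or $H'$), Cauchy--Schwarz gives $\|\vv\|_\infty \leq \sqrt{i/r}$. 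The perturbation $E = A_{H'} - A_H$ has only $2k$ nonzero entries, so $|\vv^\top E \vv| \leq 2k\|\vv\|_\infty^2 \leq 2ki/r$. Applying the Courant--Fischer min-max in both directions yields $|\lambda_i(H') - \lambda_i(H)| \leq 2k^2/r$ for every $i \leq k$. As $n \to \infty$ we have $r \to \infty$, so this shift vanishes and the top $k$ eigenvalues of $H'$ converge to $(\mu_1, \ldots, \mu_k)$.

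The main technical subtlety is the boundary case $\mu_k = 2\sqrt{d-1}$, where Claim \ref{cl22} does not directly apply to eigenvalues pushed slightly below $2\sqrt{d-1}$ by the $O(d \log n/\sqrt n)$ error in Lemma \ref{l24}, and where one also needs to ensure that the attaching does not drop $\lambda_i(H')$ below $2\sqrt{d-1}$. Both issues are handled by first realizing each strictly interior point $(\mu_1, \ldots, \mu_k)$ with $\mu_i > 2\sqrt{d-1}$ for every $i$, and then using that the set of limit points is closed to pass to the boundary via a standard diagonal argument.
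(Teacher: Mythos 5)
Your overall strategy — produce the gadgets from the disjoint-union proof, connect them up through low-degree attachment points, and use Claim~\ref{cl22} to bound the spectral perturbation — is the same shape as the paper's proof, and the path backbone is a harmless variant of the paper's chain construction. However, there is a real gap at the very first step, and it is precisely the gap that the paper closes by introducing Lemma~\ref{l26}.

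You write that each $G_j = G(n,\mu_j)$ from Lemma~\ref{l24} has girth at least $c(d)\log\log n$ ``from its construction as an induced subgraph of a near-Ramanujan $d$-regular graph $G_0$.'' But Lemma~\ref{l24} as stated (and as proved) makes no girth guarantee whatsoever: its $G_0$ is taken from the Marcus--Spielman--Srivastava bipartite Ramanujan construction, which can have girth $4$. Everything downstream in your argument — applying Claim~\ref{cl22} to $G_j^*$ and to $H'$, the $\|\vv\|_\infty\le 1/\sqrt{r}$ bounds, the $O(k^2/r)$ perturbation — depends on this girth, so the proof does not go through without it. Fixing it requires replacing Lemma~\ref{l24} by a variant that starts from a random (Friedman) near-Ramanujan $d$-regular graph, keeps track of girth throughout, and accepts a weakened bound $\lambda_2(G)\le 2\sqrt{d-1}+o(1)$ in place of $\lambda_2(G)\le 2\sqrt{d-1}$. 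This is exactly what the paper proves as Lemma~\ref{l26}, and it is the non-trivial extra ingredient; asserting it for free is the main hole in the proposal.

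There is also a secondary issue: when $\mu_1=d$, the proof of Lemma~\ref{l24} takes $G(n,d)=G_0$ itself, which is $d$-regular and hence has no vertex of degree less than $d$ to serve as your attachment point $v_1$. The paper sidesteps this by having Lemma~\ref{l26} guarantee at least two low-degree vertices in every $G(n,\lambda)$ (including $\lambda=d$), which is possible because the lemma only requires $\lambda_1$ to be within $O(1/\log n)$ of $d$ rather than exactly $d$. You would need to handle this boundary case (e.g.\ delete one vertex when $\mu_1 = d$) before the attachment step is well-defined.
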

\vspace{0.2cm}

\noindent
To establish this theorem we
first prove the following variant of Lemma \ref{l24}.
\begin{lemma}
\label{l26}
There are positive constants $c_1=c_1(d),c_2=c_2(d),c_3=c_3(d)$ so that the
following holds.
For every $d \geq 2$ and every even integer $n$, and for every
real $\lambda \in [2\sqrt{d-1},d]$ there is a
graph $G=G(n,\lambda)$  satisfying the following.
\begin{enumerate}
\item
$G$ is connected, it 
has at least $n/\log n$ and at most $n-1$ vertices, its girth
is at least $c_1 \log \log n$, its maximum degree is at most $d$
and it has at least 
$2$ vertices of degree strictly smaller than $d$.
\item
$|\lambda_1(G) -\lambda| \leq \frac{c_2}{\log n}$
\item
$\lambda_2(G) \leq 2\sqrt{d-1}+c_3 \left(\frac{\log \log n}{\log n}\right)^2$.
\end{enumerate}
\end{lemma}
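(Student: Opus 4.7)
The plan is to adapt the vertex-removal argument of Lemma \ref{l24} with two key modifications: start from a graph with high girth, and preserve connectivity throughout the deletion process.

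I take $G_0$ to be a random $d$-regular graph on $n$ vertices. Combining Friedman's theorem with the standard estimates on the distribution of short cycles in random regular graphs (exactly as in the proof of Theorem \ref{t21}), $G_0$ with positive probability has $\lambda_2(G_0) \leq 2\sqrt{d-1}+ c(\log \log n/\log n)^2$ and girth at least $c_1 \log \log n$. Cauchy's interlacing inequality then forces $\lambda_2(G_i) \leq \lambda_2(G_0)$ for every graph $G_i$ obtained from $G_0$ by successive vertex deletions, and the girth can only increase under vertex deletion, so the required upper bound on $\lambda_2$ and lower bound on girth hold automatically for every $G_i$.

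For the deletions themselves I follow the argument of Lemma \ref{l24}: at step $i$, setting $q = |V(G_i)|$ and $\ell$ to be the largest even integer at most $q/2$, the walk-counting Fact combined with averaging produces some vertex of $G_i$ lying on at most an $\ell/q$-fraction of the closed walks of length $\ell$, and deleting such a vertex yields $\lambda_1(G_{i+1}) \geq \lambda_1(G_i)\left(1 - O(\log q / q)\right)$. This smooth decrease of $\lambda_1$ lets me stop at a graph $G_t$ whose top eigenvalue is within $c_2/\log n$ of any prescribed $\lambda \in [2\sqrt{d-1}, d]$ while $|V(G_t)| = n-t$ is any value I wish in $(1, n-1]$, in particular at least $n/\log n$. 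The condition that at least two vertices have degree less than $d$ comes for free once any deletion has taken place: the first removed vertex leaves $d$ neighbors of degree $d-1 < d$, and preferring to avoid already low-degree vertices in subsequent rounds preserves the count.

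The main obstacle is to preserve connectivity throughout. I would additionally require that the vertex removed at each step be a non-cut vertex of $G_i$. To show that such a vertex also satisfies the walk-contribution bound, I propose the structural claim: while $\lambda_1(G_i) > 2\sqrt{d-1} + \eta$ for a suitable small $\eta = \eta(d) > 0$, the graph $G_i$ contains a $2$-vertex-connected block on $\Omega(q)$ vertices. Granting this, all but at most one vertex of this giant block is non-cut in $G_i$, and averaging the walk-contribution over the $\Omega(q)$ non-cut vertices there yields a non-cut vertex satisfying the walk bound at the cost of only a constant-factor loss in $\lambda_1(G_{i+1})/\lambda_1(G_i)$. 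The structural claim is the heart of the argument; it should follow from the fact that a graph whose $2$-connected blocks are all small is spectrally close to a tree and therefore has top eigenvalue close to $2\sqrt{d-1}$. Making this quantitative so that $\eta$ is smaller than $c_2/\log n$ is the most delicate part of the proof.
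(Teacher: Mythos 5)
Your high-level scaffold (start from a high-girth near-Ramanujan random regular graph, delete vertices one at a time, use interlacing for $\lambda_2$, use the closed-walk averaging of Lemma \ref{l24} to control $\lambda_1$) matches the paper. The difference is the mechanism for preserving connectivity, and that is where your proposal breaks.

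The structural claim you lean on --- that a bounded-degree graph $H$ on $q$ vertices with $\lambda_1(H) > 2\sqrt{d-1}+\eta$ must contain a $2$-connected block on $\Omega(q)$ vertices --- is false, and the heuristic offered in its support (``all $2$-blocks small $\Rightarrow$ spectrally close to a tree'') does not hold. The issue is that $\lambda_1$ exceeding $2\sqrt{d-1}$ is a \emph{local} phenomenon: a graph consisting of a small $d$-regular expander on $\Theta(\log q)$ vertices with a long degree-$2$ path or a tree-like appendage attached has $\lambda_1$ essentially $d$, girth whatever one likes, and no $2$-connected block larger than $O(\log q)$. So the averaging pool of non-cut vertices you hoped for cannot be extracted from a spectral lower bound alone, and the ``most delicate part'' you flag is in fact not salvageable along this line. (Even granting the claim, you would also need to shrink $\ell$ well below $q/2$, since averaging over a sublinear set of candidates with $\ell\approx q/2$ does not yield a candidate contained in a bounded fraction of the closed $\ell$-walks; the paper takes $\ell\approx n/\log^4 n$.)

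The paper's route avoids all of this. Because $G_0$ is a strong expander it has diameter $D=O(\log n)$; fix a BFS spanning tree $T_0$, which has at most $D+1$ levels. At each step delete a leaf $v$ of the current spanning tree $T_i$ and set $T_{i+1}=T_i-\{v\}$: this is automatically a spanning tree of $G_{i+1}$, so connectivity is preserved with no structural hypothesis, and $T_{i+1}$ still has at most $D+1$ levels. A spanning tree with at most $D+1$ levels on $m$ vertices has at least $m/(D+1)$ leaves, giving an averaging pool of size $\Omega(m/\log n)$ at every step; taking $\ell\approx n/\log^4 n$ then produces a removable leaf lying on an $O(1/\log n)$-fraction of the closed $\ell$-walks, which gives the required multiplicative control on $\lambda_1$. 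The ``two vertices of degree less than $d$'' condition also comes for free since a $d$-regular expander has no cutpoints, so every proper connected induced subgraph has at least two such vertices. You should replace the $2$-connectivity argument with this spanning-tree mechanism.
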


\begin{proof}
As in the proof of Lemma \ref{l24} we construct 
a sequence of graphs
$G_0,G_1, \ldots ,G_t$, where $t=n-n/\log n$, and $G_i$ has 
maximum degree at most $d$ and exactly
$n-i$ vertices, so that
\begin{enumerate}
\item
$\lambda_1(G_0) =d$ 
\item
$\lambda_1(G_t) \leq 2\sqrt{d-1}+\frac{c_2}{\log n}$
\item
For every $0 \leq i \leq t$, 
$\lambda_2(G_i) \leq 2 \sqrt{d-1}+c_3 (\frac{\log \log n}{\log n})^2$.
\item
For every $0\leq i<t$:
$$
\lambda_{1}(G_{i})\left(1-3\frac{\log^5 n}{n}\right) \leq \lambda_1(G_{i+1})
\leq \lambda_1(G_i).
$$
\item
Each $G_i$ is connected, and has girth at least
$c_1 \log \log n$.  For $i\geq 1$ each $G_i$ has at least two
vertices of degree strictly  smaller than $d$.
\end{enumerate}
This easily implies the assertion of the lemma. A $d$-regular graph 
$G_0$ on
$n$ vertices satisfying the requirements in
conditions (1), (3) and (5) exists by the work of Friedman \cite{Fr}
and the results in \cite{MWW}, as explained in the beginning of the
proof of Theorem \ref{t21}. All the other graphs $G_i$ will
be induced subgraphs of $G_0$, where each $G_{i+1}$ is
obtained from $G_i$ by deleting a carefully chosen vertex.
Note that trivially all graphs $G_i$ will satisfy the girth
condition in (5). Moreover, as the initial graph $G_0$ is a 
$d$-regular strong
expander, it contains no cutpoints, and hence any nontrivial
connected induced subgraph of it contains at least $2$ vertices of degree
strictly smaller than $d$.
Since $G_0$ is a strong expander, its
diameter is at most $D = O(\log n)$. Any BFS tree in it starting from
an arbitrarily chosen root has at most $D+1$ levels. Fix 
such a tree $T_0$ in $G_0$. Assuming $G_j$ and a spanning tree
of it $T_j$ of diameter at most $D+1$ have been defined already
for all $j \leq i$, define $G_{i+1}$ as follows. Let
$\ell$ be the largest  even number which does not exceed,
say, $n/\log^4 n$. 
Let $v$ be a leaf of $T_i$ contained in the smallest
number of closed walks of length $\ell$ in $G_i$. Define
$G_{i+1}=G_i-\{v\}$, and $T_{i+1}=T_i-\{v\}$.  Note that 
any spanning tree in a graph of $m$ vertices which has at most
$D+1$ levels has at least $m/(D+1)$ leaves, since all its vertices can be
covered by all the root to leaf paths, and each
such path contains at most $D+1$ vertices. Since $\ell =o( m/((D+1)\log n ))$,  by averaging
in our process there would always be a leaf contained in at most
a fraction of $O(1/\log n)$ of the closed walk. The estimate
required in condition (4) thus follows, as in the proof of
Lemma \ref{l24}. Condition (2) also follows by the argument 
for establishing condition (2) in
the proof of Lemma \ref{l24}.  This completes the proof. 
\end{proof}

\vspace{0.2cm}

\noindent
We now prove the following useful lemma, whose special case is 
Lemma 3.2 in \cite{Al1} 
(See also \cite{AGS}, \cite{Ka} for related arguments.)
\begin{lemma}
\label{lem:treemiddlelayer}
Let $H$ be a graph with maximum degree $d \geq 2$ and let $U$ be an
independent set of vertices. Suppose, further, that the induced 
subgraph of $H$ on the 
union of $U$ with the $(l+1)$ neighborhood of $U$ in
some connected component of $H \setminus U$ 
is a collection of $|U|$ vertex
disjoint trees, where the roots are the vertices of $U$. 
For each $i$ satisfying $0\leq i \leq l+ 1$, let $X_i$ be the set of
vertices of distance exactly $i$ from $U$ in those trees. Let  $\vv$ be a
nonzero eigenvector of $H$ with eigenvalue at least $2\sqrt{d-1}$. Then
for every $1 \leq i \leq l-1$ and any $1 \leq j \leq \min(l-i, i)$, 
\begin{equation}
       \sum_{u \in X_{i-j+1}} \vv(u)^2 + \sum_{u \in X_{i+j}} \vv(u)^2 \leq \sum_{u \in X_{i-j}} \vv(u)^2 + \sum_{u \in X_{i+j+1}} \vv(u)^2. \label{eq:sum2}
\end{equation}
As a consequence, for any $1 \leq i \leq l-1$,
\begin{equation}
    \sum_{u \in X_i \cup X_{i+1}} \vv(u)^2 \leq \frac{\|\vv\|_2^2}{\min(i+1, l-i+1)}. \label{eq:small}
\end{equation}
\end{lemma}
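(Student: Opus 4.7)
The plan is to follow the Alon-Boppana style argument of Lemma 3.2 in \cite{Al}, adapted to the more general setting where $U$ is an independent set rather than a single edge. The key structural observation is that for each $u \in X_i$ with $1 \leq i \leq l$, the forest hypothesis forces every $H$-neighbor of $u$ to lie inside the forest: $u$ has exactly one parent $p(u) \in X_{i-1}$ and at most $d-1$ children $C(u) \subseteq X_{i+1}$, so the eigenvalue equation reads
\[
\mu \vv(u) \;=\; \vv(p(u)) + \sum_{v \in C(u)} \vv(v).
\]

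The first step is to square this relation, sum over $u \in X_i$, and apply $(a+b)^2 \leq 2(a^2+b^2)$ together with Cauchy--Schwarz on $\sum_{v \in C(u)} \vv(v)$. This gives $\mu^2 x_i \leq 2 P_i + 2 B_i$, where $x_k := \sum_{u \in X_k} \vv(u)^2$, $P_i := \sum_{u \in X_i} \vv(p(u))^2$, and $B_i := \sum_{u \in X_i} \bigl(\sum_{v \in C(u)} \vv(v)\bigr)^2$. For $i \geq 2$ each $w \in X_{i-1}$ has at most $d-1$ children (it too has a parent), so $P_i \leq (d-1) x_{i-1}$; and $B_i \leq (d-1) x_{i+1}$ always. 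Using $\mu^2 \geq 4(d-1)$, these bounds yield the discrete convexity $2 x_i \leq x_{i-1} + x_{i+1}$ for $2 \leq i \leq l$; equivalently, the successive differences $\Delta_k := x_{k+1} - x_k$ are nondecreasing on $\{1, 2, \ldots, l\}$.

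From this monotonicity, inequality (\ref{eq:sum2}) rearranges to $\Delta_{i-j} \leq \Delta_{i+j}$, which is immediate whenever $i - j \geq 1$. The delicate case is $j = i$ (so $i - j = 0$), where one needs $\Delta_{2i} \geq \Delta_0$; since $\Delta_{2i} \geq \Delta_2$ by the monotonicity above, this reduces to proving $\Delta_2 \geq \Delta_0$, i.e.\ $x_0 + x_3 \geq x_1 + x_2$. This boundary step is the main obstacle: the direct Cauchy--Schwarz bound at $i = 1$ only yields the weaker $2 x_1 \leq \tfrac{d}{d-1}\, x_0 + x_2$, because roots $w \in U$ may have $d$ children rather than $d-1$. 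The plan to close this gap is to retain the Cauchy--Schwarz slack term $\sum_{u \in X_i}\bigl(\vv(p(u)) - \sum_{v \in C(u)} \vv(v)\bigr)^2$ in both the $i = 1$ and $i = 2$ estimates, and to argue that the slack at $i = 2$ dominates the deficit at $i = 1$, thereby forcing $\Delta_2 \geq \Delta_0$.

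Finally, (\ref{eq:small}) will follow from (\ref{eq:sum2}) by iteration. Setting $g(j) := x_{i-j} + x_{i+j+1}$, inequality (\ref{eq:sum2}) says that $g$ is nondecreasing on $\{0, 1, \ldots, J\}$ with $J := \min(i, l-i)$. Hence
\[
(J+1)(x_i + x_{i+1}) \;=\; (J+1)\, g(0) \;\leq\; \sum_{j=0}^{J} g(j) \;=\; \sum_{k=i-J}^{\,i+J+1} x_k \;\leq\; \|\vv\|_2^{\,2},
\]
where the middle equality uses that the layers indexed by $[i-J,\, i+J+1] \subseteq [0, l+1]$ are disjoint. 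Dividing gives $x_i + x_{i+1} \leq \|\vv\|_2^{\,2} / (J+1) = \|\vv\|_2^{\,2} / \min(i+1, l-i+1)$, as required.
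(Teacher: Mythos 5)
Your structure is correct in its major parts: the eigenvalue equation, the convexity $2x_i \leq x_{i-1}+x_{i+1}$ for $2 \leq i \leq l$ via $(a+b)^2 \leq 2(a^2+b^2)$ and Cauchy--Schwarz, the reduction of (\ref{eq:sum2}) to the monotonicity of $\Delta_k$, and the clean derivation of (\ref{eq:small}) by summing $g(j)$ over $j=0,\dots,J$ are all sound. But you never actually close the boundary case $j=i$, which you yourself flag as ``the main obstacle'': the proof stops at a \emph{plan} (``argue that the slack at $i=2$ dominates the deficit at $i=1$'') rather than a proof.

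Moreover, that plan does not work as stated. Keeping the slack $\sigma_i := \sum_{u\in X_i}(\vv(p(u))-\sum_{v\in C(u)}\vv(v))^2$ in the $i=1$ and $i=2$ bounds yields (after combining with $\mu^2 \geq 4(d-1)$) the inequality $x_1 + x_2 \leq \tfrac{d}{d-1}x_0 + x_3 - (\sigma_1+\sigma_2)/(2(d-1))$, so concluding $x_0+x_3 \geq x_1+x_2$ would require $\sigma_1+\sigma_2 \geq 2x_0$; nothing forces this, and it can fail when the eigenvector is nearly balanced across consecutive layers while $x_0>0$. The real resolution is that the deficit you see at $i=1$ is an artifact of the lossy bound $P_1 \leq d\,x_0$: the paper's proof, via the convention that for a root $u' \in U$ one sets $d(u')-1$ equal to the number of tree-children of $u'$ and assumes $d(u')\le d$, is implicitly using that each root has at most $d-1$ children (which indeed holds in every application, since each root has at least one $H$-neighbor outside the forest). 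Under that same constraint $c_{u'}\le d-1$, your own bound tightens to $P_1 \leq (d-1)x_0$ and gives $2x_1 \leq x_0 + x_2$ directly; the slack gymnastics are unnecessary, and as written the argument is incomplete without it.

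The paper also proves the convexity in a more uniform way worth noting: rather than applying $(a+b)^2\leq 2(a^2+b^2)$ and then a separate Cauchy--Schwarz, it writes $\vv(u')=(d(u')-1)\cdot\tfrac{\vv(u')}{d(u')-1}$ inside the eigenvalue equation and applies a single Cauchy--Schwarz to the children of $u$ together with $d(u')-1$ copies of $\tfrac{\vv(u')}{d(u')-1}$. This yields the per-vertex inequality $\sum_{w\sim u,\,w\in X_{i+1}}\vv(w)^2 + \tfrac{\vv(u')^2}{d(u')-1}\geq 2\vv(u)^2$, and when one sums over $u\in X_i$ the parent $u'$ contributes $c_{u'}/(d(u')-1)\le 1$ times, so $S_{i-1}+S_{i+1}\geq 2S_i$ falls out at once for all $1\leq i\leq l$ with no overcounting and no separate boundary case. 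That degree-weighted Cauchy--Schwarz is the piece your proposal is missing.
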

\begin{proof}
Let $u \in X_i$ where $1\leq i \leq l$, and let 
$u'$ be its unique neighbor in $X_{i-1}$. Then  
\begin{equation}
    \lambda \vv(u) = \sum_{w \in X_{i+1}, w \sim u} \vv(w) + \vv(u'). 
\end{equation}
 Thus  by writing $\vv(u') = (d(u')-1)\vv(u')/(d(u')-1)$ and by Cauchy-Schwarz,  we have 
\begin{equation}   
    \sum_{w \in X_{i+1}, w \sim u} \vv(w)^2 + \frac{\vv(u')^2}{d(u')-1} \geq  \frac{\lambda^2 \vv(u)^2}{ d(u)+d(u')-2}   \geq 2\vv(u)^2. 
\end{equation} 
(Here it is convenient to denote, for $u' \in U$, by $d(u')-1$ the 
degree of $u'$ as a root of the corresponding tree described 
in the lemma, even when the actual degree of $u'$ in  $H$ may be
larger than $d(u')$. This is convenient for the uniformity of the
notation, and the only property needed in the proof is that
$d(u') \leq d$ with this notation too, which clearly holds).

Add up these inequalities for all $u \in X_i$. Noticing that each vertex $w$ in $X_{i+1}$ is adjacent to exactly one vertex in $X_i$ while each vertex $u'$ in $X_{i-1}$ is adjacent to exactly $(d(u')-1)$ vertices in $X_i$, 
putting $S_i =\sum_{u \in X_i} \vv(u)^2$, it follows  that
\begin{equation}
    2\sum\nolimits_{u \in X_i} \vv(u)^2 \leq \sum\nolimits_{w \in X_{i+1}} \vv(w)^2 + \sum\nolimits_{u' \in X_{i-1}} \vv(u')^2 \implies 2S_i \leq S_{i+1} + S_{i-1}.  \label{eq:2si}
\end{equation}
Thus if $1 \leq i \leq l-1$, by adding (\ref{eq:2si}) for $i$ and for $i+1$, 
we have 
\begin{equation}
    2S_i + 2S_{i+1} \leq S_{i+1} + S_{i-1} + S_{i+2}+ S_i  \implies S_i + S_{i+1} \leq S_{i-1} + S_{i+2}.\label{eq:sumSi}
\end{equation}

We now prove (\ref{eq:sum2}) by induction on $j$. 
The base case when $j =1$ is (\ref{eq:sumSi}). Suppose the claim holds up to $j-1$ where $j \geq 2$. 
Apply (\ref{eq:2si}) to $S_{i-j+1}$ and $S_{i+j}$ to get that
$
 2(S_{i-j+1} + S_{i+j}) \leq   S_{i-j}+S_{i-j+2}  + S_{i+j-1}+S_{i+j+1}
 \leq S_{i-j+1} + S_{i+j}+ S_{i-j}+S_{i+j+1}
$  by the inductive hypothesis. Therefore $S_{i-j+1} + S_{i+j} \leq S_{i-j}+S_{i+j+1} $, as desired. 

Finally, (\ref{eq:small}) is proved by applying (\ref{eq:sum2}) with $j = 1, \dots, \min(l-i, i)$. 
\end{proof}
\begin{lemma}
\label{c28}
Let $F$ and $H$ be two connected graphs,
each having girth at least $2r+1$
and maximum degree at most $d$. Let $G$ be the graph obtained 
from the vertex disjoint union of $F$ and $H$ by adding 
an arbitrary edge connecting them, keeping the maximum degree
at most $d$.
Let $\mu_1 \geq \mu_2 \geq \ldots \geq \mu_s$ be the $s$ largest
eigenvalues of the graph which is the disjoint union of
$F$ and $H$ (that is, the $s$ largest elements in the set
of all eigenvalues of $F$ and all eigenvalues of $H$,
taken with multiplicities). Then 
$$
|\mu_s - \lambda_s(G)| \leq \frac{2 s}{r+1}.
$$
\end{lemma}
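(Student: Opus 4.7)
The plan is to combine the Courant--Fischer min-max characterization of eigenvalues with a pointwise delocalization estimate for eigenvectors of high-girth graphs. The difference of quadratic forms is $x^T(A_G - A_{F\sqcup H})x = 2x(u)x(v)$, where $uv$ is the added edge, so it suffices to control $x(u)^2 + x(v)^2$ on the top-$s$ subspace of $F\sqcup H$ and on the top-$s$ subspace of $G$.

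The first step is a pointwise bound: \emph{if $K$ has maximum degree at most $d\geq 2$ and girth at least $2r+1$, and $\vv$ is a unit eigenvector of $K$ with eigenvalue $\lambda \geq 2\sqrt{d-1}$, then $\vv(x)^2 \leq 1/(r+1)$ for every vertex $x$.} I would apply Lemma~\ref{lem:treemiddlelayer} with $U=\{x\}$ and $l=r-1$, which is valid since the $r$-neighborhood of $x$ is a tree. The lemma's convexity estimate~\eqref{eq:2si} gives $2S_i \leq S_{i-1}+S_{i+1}$ for $1\leq i \leq r-1$, where $S_i = \sum_{y\in X_i}\vv(y)^2$. At the boundary, the eigenvector equation $\lambda\vv(x) = \sum_{y\sim x}\vv(y)$ combined with Cauchy--Schwarz gives $\lambda^2 S_0 \leq d\cdot S_1$, whence $S_1 \geq (4(d-1)/d)S_0 \geq 2S_0$ using $\lambda \geq 2\sqrt{d-1}$ and $d\geq 2$. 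Thus $S_1-S_0 \geq S_0 \geq 0$, and by convexity the successive differences $S_{i+1}-S_i$ are non-decreasing, so $S_0 \leq S_1 \leq \cdots \leq S_r$. Summing, $(r+1)S_0 \leq \sum_{i=0}^r S_i \leq \|\vv\|_2^2 = 1$, yielding $\vv(x)^2 \leq 1/(r+1)$.

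The second step applies min-max in both directions. Let $V$ be the span of the top $s$ eigenvectors $\vv_1,\dots,\vv_s$ of $F\sqcup H$; each is supported entirely in $F$ or entirely in $H$, so vanishes at one of $u,v$. For any unit $x=\sum_i c_i\vv_i \in V$, Cauchy--Schwarz gives $x(u)^2 + x(v)^2 \leq \sum_i(\vv_i(u)^2 + \vv_i(v)^2) \leq s/(r+1)$, since each summand contributes at only one of $u,v$. Hence $x^T A_G x \geq x^T A_{F\sqcup H} x - (x(u)^2+x(v)^2) \geq \mu_s - s/(r+1)$, and min-max yields $\lambda_s(G) \geq \mu_s - s/(r+1)$. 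For the reverse direction, since adding a single edge between the two components of $F\sqcup H$ creates no new cycles, $G$ also has girth at least $2r+1$, and the pointwise bound applies to each of the top $s$ eigenvectors of $G$. Each such eigenvector now contributes at both $u$ and $v$, giving $x(u)^2+x(v)^2 \leq 2s/(r+1)$ for unit $x$ in their span; the same computation then gives $\mu_s \geq \lambda_s(G) - 2s/(r+1)$. Combining the two directions yields $|\mu_s - \lambda_s(G)| \leq 2s/(r+1)$.

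The main subtlety is that the pointwise bound requires eigenvalue at least $2\sqrt{d-1}$, so the lemma is meant to be used in the regime $\mu_s,\lambda_s(G) \geq 2\sqrt{d-1}$, implicitly paralleling the hypothesis already needed for Lemma~\ref{lem:treemiddlelayer}; this is precisely the regime of interest for the intended applications to Theorem~\ref{t25}.
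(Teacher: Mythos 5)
Your overall strategy is exactly the paper's: a pointwise delocalization bound for high-eigenvalue eigenvectors of high-girth graphs, followed by a two-sided variational argument controlling the single-edge perturbation $\yy^T(A_G - A_{F\sqcup H})\yy = 2\yy(u)\yy(v)$. The min-max steps in your second part are fine, and the observation that each $f_i$ vanishes at one of $u,v$ (sharpening the lower-bound direction to $s/(r+1)$) is a nice, if inessential, refinement. You also rightly flag the implicit spectral hypothesis $\mu_s, \lambda_s(G)\ge 2\sqrt{d-1}$ needed for the delocalization to apply, which the paper leaves unstated.

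The one real issue is in your re-derivation of the pointwise bound from Lemma~\ref{lem:treemiddlelayer} with $U=\{x\}$. That lemma defines the layers $X_i$ inside a single connected component of $K\setminus U$ (together with $U$). If $x$ is a cut vertex, your boundary step $\lambda^2 S_0 \le d\,S_1$ uses $\sum_{y\sim x}\vv(y)^2$ over all of $x$'s neighbors, whereas the lemma's $S_1$ only counts the neighbors lying in the chosen component; the inequality $S_1\ge 2S_0$ need not hold for the component-restricted $S_1$, and the monotone chain $S_0\le S_1\le\cdots\le S_r$ then fails to start. The intended conclusion $\vv(x)^2\le 1/(r+1)$ is nonetheless correct: it is precisely Claim~\ref{cl22} applied with girth $2(r+1)-1$, and that claim's base set is an \emph{edge} $N_0=\{u,v\}$, which captures all neighbors of $u$ and $v$ from the outset and so avoids the cut-vertex issue. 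The simplest repair is to cite Claim~\ref{cl22} directly, which is what the paper's own proof does.
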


\begin{proof}
For each eigenvalue $\mu_i$ above which is an eigenvalue of $H$, 
let  $f_i$ be the corresponding normalized
eigenvector viewed as a vector defined on $V(G)=V(H) \cup  V(F)$,
by extending its definition to be $0$ on $V(F)$. Similarly, if
$\mu_i$ is an eigenvalue of $F$ let $f_i$ be a corresponding eigenvector
defined to be $0$ on the vertices of $H$. These vectors 
are the normalized top $s$ eigenvectors of $H \cup F$
and span a subspace of dimension $s$. Let $A'=A_{H \cup  F}$ be 
the adjacency
matrix of the disjoint union of $H$ and $F$, then for any normalized
vector $\yy$ in this subspace $\yy^T A' \yy \geq \mu_s$.
By Claim \ref{cl22}, the $\ell_{\infty}$ norm of each of the vectors
$f_i$ is at most $1/\sqrt{r+1}$ and hence by Cauchy-Schwarz
the $\ell_{\infty}$ norm of each normalized vector $y$ in this
space is at most
$\sqrt{s/(r+1)}$. Since the graph $G$ is obtained from
$H \cup F$ by the addition of a single edge, for each such
$\yy$, $\yy^T A_G \yy$ and $\yy^T A' \yy$ differ  by only two terms
of the form $\yy(u)\yy(v)$ and hence 
$\yy^T A_G \yy \geq \mu_s -2s/(r+1)$. By the variational definition
of $\lambda_s(G)$ this implies that $\lambda_s(G)
\geq \mu_s -2s/(r+1)$. To upper bound $\lambda_s(G)$ consider 
the subspace $W$ spanned by the eigenvectors of the top
$s$ eigenvalues of $G$. This subspace contains a nonzero
normalized vector $\zz$
orthogonal to all the vectors $f_i$ defined above 
for $1 \leq i \leq s-1$. In addition, its $\ell_{\infty}$-norm
is at most $\sqrt{s/(r+1)}$. It is clear that
$\zz^T A' \zz \leq \mu_s$ and as before the fact that
$\|\zz\|_{\infty} \leq \sqrt{s/(r+1)}$ implies that 
$\lambda_s(G) \leq 
\zz^T A_{G} \zz \leq \mu_s + 2s/(r+1)$. This supplies the desired 
upper bound for $\lambda_s(G)$, completing the proof. 
\end{proof}
\vspace{0.2cm}

\begin{proof}[Proof of Theorem \ref{t25}]
Let $(\mu_1, \ldots \mu_k)$ be a vector in $B(d,k)$. By Lemma
\ref{l26} there are connected graphs $G_j=G(n,\mu_j)$ with the 
following properties. 
\begin{enumerate}
\item
Each $G_j$ has maximum degree at most
$d$ and has at least two vertices of degree less than $d$.
\item
$|\lambda_1(G_j)-\mu_j| \leq \frac{c_2}{\log n}$
\item
$\lambda_2(G_j) \leq 2\sqrt{d-1}+c_3 (\frac{\log \log n}{\log n})^2$.
\item
The girth of $G_j$ is at least $c_3 \log \log n$.
\end{enumerate}
Pick two vertices $u_{j,1}, u_{j,2}$ of degree less than $d$ in each $G_j$ 
for $2 \leq j \leq k-1$, one such vertex $u_{1,2}$ in 
$G_1$ and one such vertex $u_{k,1}$ in $G_k$. Adding the edges
$u_{j,2}u_{j+1,1}$ for $1 \leq j<k$ to 
the vertex disjoint union of the graphs $G_j$
we get a connected graph $G=G(n)$ with maximum degree at most $d$. 
Applying 
Lemma \ref{c28} $k-1$ times it follows that if $\lambda_1 \geq
\lambda_2 \geq \ldots \geq \lambda_k$ are its top $k$ eigenvalues then
$|\lambda_i -\mu_i| \leq \frac{16k^2}{c_3 \log \log n}$
for all $1 \leq i \leq k$. 
Taking a growing sequence of values of $n$ completes
the proof of the theorem. 
\end{proof}

\section{The top eigenvalues of regular graphs}
We start with some
notation. Let $H=(V,E)$ be a graph with maximum degree 
at most $d$. 
The {\it $d$-augmentation} $A_d(H)$ is the graph obtained from
$H$ as follows. For each vertex $v$ of degree $d(v)<d$ of
$H$, let $U_v$ be a set of $d-d(v)$ new vertices, where all
sets $U_v$ are pairwise disjoint. Every point in $U_v$ is adjacent
to the vertex $v$. Thus, if $H$ is $d$-regular then $A_d(H)=H$. In
every other case all vertices of $A_d(H)$ are of degrees $d$ or $1$,
and the number of leaves (vertices of degree $1$) is exactly
$d|V|-2|E|$.   Put $T^0(d) H =H$,  $T^1(d) H = A_d(H)$ 
and $T^{i+1}(d)H=A_d(T^i(d)H)$ for all $i \geq 1$. Note that $T^r(d)H$
is obtained from the vertex disjoint union of $H$ and 
$(d|V|-2|E|)$ trees with $r$ levels by joining the roots of these 
trees to the vertices of $H$ of degree lower than $d$. 

\subsection{The top eigenvalues of regular graphs -- a simple version}

In this subsection we present the proof of the following Theorem \ref{t15}, 
which is a less powerful version of Theorem \ref{thm:limit}, 
using a simpler analysis based on the same basic approach. A 
similar method with slightly more sophisticated arguments
will be used later to establish Theorem \ref{thm:limit}.

\begin{theo} \label{t15}
For every $d \geq 3$ and every $k$, every point of 
$$
C(d,k)=\{(\mu_1,\mu_2, \ldots ,\mu_k):~d=\mu_1 \geq 
\mu_2 \geq \mu_3 \geq \ldots \geq \mu_k \geq 3\sqrt{d-1} \}.
$$ 
is a limit
point of a sequence of vectors 
$(\lambda_1(G_i),\lambda_2(G_i), \ldots ,\lambda_k(G_i))$
for an infinite sequence  $G_i$ of $d$-regular graphs.
\end{theo}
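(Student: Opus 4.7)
The plan is to realize every target $(\mu_1,\ldots,\mu_k)\in C(d,k)$ as a limit point by (i) using the bounded-degree construction of Theorem \ref{t25} to produce, for each large even $n$, a connected graph $G^*=G^*(n)$ of maximum degree at most $d$, girth $\Omega(\log\log n)$, whose top $k$ eigenvalues are within $O(1/\log\log n)$ of $\mu_1,\ldots,\mu_k$ and whose $(k{+}1)$-st eigenvalue is at most $2\sqrt{d-1}+o(1)$; (ii) applying the $d$-augmentation $F := T^r(d)G^*$ for $r=\lfloor \tfrac{1}{10}\log\log n\rfloor$, so that only the outermost layer of $F$, a set $L$ of $N=(d|V(G^*)|-2|E(G^*)|)(d-1)^{r-1}$ depth-$r$ leaves, is not yet $d$-regular; and (iii) closing up by gluing onto $L$ a high-girth near-Ramanujan $(d-1)$-regular graph, giving each leaf its $d-1$ missing edges. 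The resulting graph $G=G(n)$ is $d$-regular, with girth still $\Omega(\log\log n)$ provided the Ramanujan closure is chosen with sufficiently large girth.

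The lower bound $\lambda_j(G)\ge \mu_j-o(1)$ for $1\le j\le k$ follows by extending each unit eigenvector $f_j$ of $\lambda_j(G^*)$ to a test vector $\tilde f_j$ on $G$: on each tree branch of $F$, extend $f_j$ using the decaying solution $\rho_j=(\lambda_j(G^*)-\sqrt{\lambda_j(G^*)^2-4(d-1)})/(2(d-1))$ of the tree eigenvalue recursion, and set $\tilde f_j=0$ on the closure. Since $\lambda_j(G^*)\ge 3\sqrt{d-1}-o(1)$, the per-level attenuation factor satisfies $(d-1)\rho_j^2\le (3-\sqrt{5})^2/4<\tfrac16$, so the tree mass is geometrically summable and $\tilde f_j^{\,T} A_G \tilde f_j/\|\tilde f_j\|_2^2=\lambda_j(G^*)-o(1)$ (the only mismatch with the eigenvector equation sits at the seam leaves, whose $\tilde f_j$-values are suppressed by $\rho_j^{\,r}$, hence $o(1)$). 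Because the supports of the $\tilde f_j$ are concentrated on the disjoint pieces of $G^*$ produced by the construction of Theorem \ref{t25}, they are almost orthogonal, and applying min-max over their span delivers the lower bound.

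For the upper bound, any unit eigenvector of $G$ with eigenvalue $\lambda>2\sqrt{d-1}$ must concentrate its $\ell_2$ mass near $V(G^*)$: invoking Lemma \ref{lem:treemiddlelayer} on each depth-$r$ tree branch of $F$ (rooted at the appropriate degree-deficient vertex of $G^*$), at most a $1/\min(i+1,r-i+1)$-fraction of the mass can lie in two consecutive layers, so the total mass on $L$ and on the Ramanujan closure is $o(1)$; combined with the near-Ramanujan property of the closure, which precludes its own eigenvalues from exceeding $2\sqrt{d-2}+o(1)<2\sqrt{d-1}$, and a Cauchy--Schwarz bound on the $O(N)$ seam edges, the eigenvalue $\lambda$ is within $o(1)$ of an eigenvalue of $G^*$. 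Letting $n\to\infty$ then produces the desired limit point.

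The main obstacle is precisely the quantitative leverage supplied by the threshold $3\sqrt{d-1}$: at this value the localized tree eigenvector decays with $(d-1)\rho^2$ bounded uniformly below $1$, which keeps both the test-vector Rayleigh quotient and the seam perturbation within $o(1)$. If $\mu_k$ were allowed to drop toward $2\sqrt{d-1}$, this geometric decay degenerates and the $\ell_2$ mass of the test vector on the trees would dominate, obstructing a clean transfer of spectrum from $G^*$ to $G$; overcoming this degeneracy, so as to reach the full interval $[2\sqrt{d-1},d]$ as in Theorem \ref{thm:limit}, is what distinguishes the more delicate argument of Section~4 from the simpler plan sketched here.
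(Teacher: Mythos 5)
The central difficulty you correctly identify, and resolve with the tree-decay calculation, is how to transfer the spectrum of the bounded-degree graph $G^*$ to a $d$-regular graph while paying only an $o(1)$ error. Your decay estimate $(d-1)\rho_j^2 \leq (3-\sqrt{5})^2/4 < 1/6$ is right and is exactly what the threshold $3\sqrt{d-1}$ buys (the paper obtains the same quantitative leverage by bounding cross-edges with the norm of a union of stars). However, step (iii) — ``gluing onto $L$ a high-girth near-Ramanujan $(d-1)$-regular graph'' — breaks the argument.

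Concretely, let $H$ be the $(d-1)$-regular closure graph on the set $L$ of leaves, and let $\chi_L$ be its indicator. Since $G$ has $(d-1)|L|/2$ edges inside $L$ and exactly one edge from each leaf to its tree parent, the vector $\chi_L' := \chi_L - \tfrac{|L|}{N}\mathbf{1}$ (with $N=|V(G)|$), which is orthogonal to $\mathbf{1}$, has Rayleigh quotient
\[
\frac{\chi_L'^{\,T} A_G\,\chi_L'}{\|\chi_L'\|_2^2}
=\frac{(d-1)-d\,|L|/N}{1-|L|/N}
\;\longrightarrow\; d-1
\]
as $|L|/N\to 0$. Hence $\lambda_2(G)\geq d-1-o(1)$, independently of $\mu_2$. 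For every $d\geq 11$ one has $3\sqrt{d-1}<d-1$, so the target values $\mu_2\in[3\sqrt{d-1},\,d-1)$ are unreachable: the closure introduces a spurious large eigenvalue rather than transmitting $\mu_2$. Your upper-bound sketch (``the near-Ramanujan property of the closure, which precludes its own eigenvalues from exceeding $2\sqrt{d-2}+o(1)$'') overlooks that the top eigenvalue of a $(d-1)$-regular graph is $d-1$, not $2\sqrt{d-2}$, and even though the constant-on-$L$ vector stops being an exact eigenvector after gluing, it survives as a test vector essentially orthogonal to $\mathbf 1$.

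The fix, which is what the paper's patching lemma (Lemma~\ref{lem:patchingshort}) actually does, is to make the closure \emph{large and $d$-regular}: take a near-Ramanujan $d$-regular graph $F_0$ with $|V(F_0)|$ enormously larger than the total size of the gadgets, delete a sparse set $M$ of well-separated vertices from $F_0$ to create exactly the right number of degree-$(d-1)$ ``sockets,'' and identify the leaves of the augmented gadgets with those sockets. Now the test vector supported on the identified leaves lives inside the near-Ramanujan $F_0$, and after projecting out $\mathbf 1$ its Rayleigh quotient is bounded by $\lambda_2(F_0)\approx 2\sqrt{d-1}$ plus a tiny boundary correction, rather than by $d-1$. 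A secondary difference is that the paper uses one gadget per eigenvalue $\mu_i$ (built via $T^{8R}(d)$ of a graph from Lemma~\ref{l29}) and patches them all onto the single $F_0$, rather than first assembling the chain $G^*$ from Theorem~\ref{t25} and closing it up afterward; this keeps the pieces decoupled and makes the min-max bookkeeping cleaner, but it is the choice of closure, not the decomposition, that is essential.
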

We need the 
following lemma.
\begin{lemma}
\label{l29}
Let $d \geq 3$ be an integer
and let $\eps>0$ be a real number. Then for every real $\lambda$ satisfying
$2 \sqrt{d-1}+2\eps \leq \lambda \leq d$ there exists a graph $H$
with maximum degree at most $d$ in which every connected
component has at least one vertex of degree smaller than $d$
satisfying the following properties.
\begin{enumerate}
\item
The girth of $H$ is at least $20 d/\eps$.
\item
The number of leaves of $T^1(d)H$ is divisible
by $2d$.
\item
$\lambda_2(H) \leq 2 \sqrt{d-1}+\eps$.
\item
For every $i \geq 4 \sqrt{d-1}/\eps$, 
$|\lambda_1(T^i(d)H) -\lambda| \leq 2 \eps.$
\end{enumerate}
\end{lemma}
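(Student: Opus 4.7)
The plan is to produce $H$ as an induced subgraph of a near-Ramanujan, high-girth $d$-regular graph, chosen so that the attached-tree limit $\phi(H) := \lim_{i\to\infty}\lambda_1(T^i(d)H)$ matches the target $\lambda$. By Friedman's theorem \cite{Fr} together with the known distribution of short cycles in random regular graphs \cite{MWW}, I start with a $d$-regular graph $G$ on sufficiently many vertices $N$ with $\lambda_2(G)\le 2\sqrt{d-1}+\eps/2$ and girth at least $20d/\eps$. I then form a chain of induced subgraphs $G=H^{(0)}\supset H^{(1)}\supset \cdots$ by deleting one vertex at a time (initially choosing the deletions so $H^{(j)}$ stays connected). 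Cauchy interlacing gives $\lambda_2(H^{(j)})\le \lambda_2(G)\le 2\sqrt{d-1}+\eps/2$, verifying (3); induced subgraphs preserve girth, verifying (1); and the leaf count $d|V(H^{(j)})|-2|E(H^{(j)})|$ in $T^1(d)H^{(j)}$ changes by a bounded integer per deletion, so (2) can be ensured by adjusting $j$ by at most $2d$.

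The heart of the argument is to show that the sequence $\phi(H^{(j)})$ is well-defined, varies continuously in $j$, and covers the full target range. The limit $\phi(H)$ exists because $T^i(d)H$ embeds as an induced subgraph of $T^{i+1}(d)H$, so eigenvalue interlacing makes $\lambda_1(T^i(d)H)$ non-decreasing in $i$ and bounded by $d$; equivalently $\phi(H)$ is the top of the spectrum of the bounded self-adjoint adjacency operator on the infinite graph $T^\infty(d)H$ obtained by attaching an infinite $(d-1)$-ary tree at every low-degree vertex of $H$. At the endpoints $\phi(H^{(0)})=\phi(G)=d$, and once $H^{(j)}$ is sparse enough to lack a bound state, $\phi(H^{(j)})=2\sqrt{d-1}$. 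The key estimate is $|\phi(H^{(j+1)})-\phi(H^{(j)})|\le \eps$: for $\phi(H^{(j)})>2\sqrt{d-1}$ the top eigenfunction $\vv$ of $T^\infty(d)H^{(j)}$ decays exponentially into the trees with rate $r(\phi)=(\phi-\sqrt{\phi^2-4(d-1)})/(2(d-1))<1/\sqrt{d-1}$, so it is essentially supported on a neighborhood of $H^{(j)}$. Applying Lemma \ref{lem:treemiddlelayer} on this neighborhood (whose girth is at least $20d/\eps$) bounds $\|\vv\|_\infty^2 = O(\eps/d)$, so a single-vertex deletion perturbs the Rayleigh quotient by $O(\eps)$. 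A discrete intermediate-value argument then produces $j$ with $|\phi(H^{(j)})-\lambda|\le \eps$; set $H:=H^{(j)}$.

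It remains to verify (4). Truncating each attached infinite tree at depth $i$ is a local perturbation of the operator $T^\infty(d)H$; a variational argument using the same decaying eigenfunction shows $0\le \phi(H)-\lambda_1(T^i(d)H)\le C\,r(\phi(H))^{2i}$ for a constant $C=C(d)$. For $\phi(H)\ge 2\sqrt{d-1}+\eps$ one computes $r(\phi)^2\le 1-\Omega(\sqrt{\eps}/(d-1)^{1/4})$, so choosing $i\ge 4\sqrt{d-1}/\eps$ reduces the error to at most $\eps$; combined with $|\phi(H)-\lambda|\le \eps$ this gives $|\lambda_1(T^i(d)H)-\lambda|\le 2\eps$. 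The main obstacle is the continuity step: even though the top eigenfunction of $T^\infty(d)H^{(j)}$ lives on an infinite graph, I need its restriction to a small neighborhood of the deleted vertex to be delocalized. This is precisely where the high-girth hypothesis $20d/\eps$ enters, allowing Claim \ref{cl22} / Lemma \ref{lem:treemiddlelayer} to bound $\|\vv\|_\infty$ on a large enough ball around each vertex and thereby control the Rayleigh-quotient perturbation under a single vertex deletion.
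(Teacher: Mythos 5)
Your overall strategy matches the paper's: start from a near-Ramanujan, high-girth $d$-regular graph, delete vertices one (the paper: two) at a time, use Cauchy interlacing for properties (1) and (3), and use a delocalization bound (Claim~\ref{cl22}/Lemma~\ref{lem:treemiddlelayer}) to make the top augmented eigenvalue move by $O(\eps)$ per step. The main stylistic difference is that you pass to the infinite attached-tree operator $T^\infty(d)H$ and its ``ground state'' $\phi(H)$, while the paper stays entirely finite, working with $T^J(d)H_i$ for the fixed cutoff $J=\lceil 4\sqrt{d-1}/\eps\rceil$ and then proving (Claim~\ref{claim:notgrow}) that $\lambda_1(T^R(d)H_i)$ hardly grows for $R\geq J$ via a three-window decomposition of the eigenvector by level. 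Both are reasonable packagings of the same idea; yours is conceptually cleaner but carries extra spectral-theoretic baggage (existence of a genuine $\ell^2$ eigenfunction for the infinite operator when the top of the spectrum exceeds $2\sqrt{d-1}$) that the finite version avoids.

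Two steps in your write-up need repair. First, the continuity step for $\phi$ under a single vertex deletion is incomplete: when you delete a vertex $v$ from $H^{(j)}$, the operator $T^\infty(d)H^{(j+1)}$ is not simply the restriction of $T^\infty(d)H^{(j)}$ to the complement of $v$; new trees are attached at the neighbors of $v$ and joined by new edges. Your Rayleigh-quotient argument with the delocalized eigenfunction gives the \emph{lower} bound $\phi(H^{(j+1)})\geq \phi(H^{(j)})-O(\eps)$, but for the matching \emph{upper} bound you must control the effect of those newly joined trees; this is exactly what the paper does by combining ``deleting vertices changes $\lambda_1$ by $O(1/r)$'' with ``attaching tree components disjointly changes nothing when $\lambda_1>2\sqrt{d-1}$'' and then invoking Lemma~\ref{c28} for the edges that join the roots. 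Second, your fix for condition (2) --- ``the leaf count changes by a bounded integer per deletion, so (2) can be ensured by adjusting $j$ by at most $2d$'' --- is not justified: the per-step change $d-2\deg(v)$ can be constant (e.g.\ always $+2$ or always some other even value), and a constant step size need not hit every residue class modulo $2d$ within any bounded window. The paper sidesteps this by allowing each $H_i$ to be an induced subgraph of $G$ \emph{plus at most $d$ isolated edges}, which contribute $2(d-1)$ leaves each and let one adjust the count to a multiple of $2d$ without affecting girth, interlacing, or the spectral estimates; you should adopt a similar device. Finally, as a minor point, your truncation error $\phi(H)-\lambda_1(T^i(d)H)\leq C\,r(\phi)^{2i}$ should really involve $q^i$ with $q=(d-1)r(\phi)^2$ (the $\ell^2$ mass at tree-depth $k$ scales like $q^k$, not $r^{2k}$), though the conclusion still goes through with $i\geq 4\sqrt{d-1}/\eps$.
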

\begin{proof} As in the previous proofs we construct a family of 
graphs $H_0, H_1, \ldots $ and show that $H$ can be chosen to 
be one of them.  We start with a $d$-regular high-girth near Ramanujan
graph $G$ with a large even number $m$ of vertices, and omit from it 
two nonadjacent
vertices to get a graph $H_0$ satisfying (1) and (2). Each other graph
$H_i$ of the sequence will be obtained from the previous one by
deleting two nonadjacent vertices and by 
adding, if needed, a set of at most $d$ isolated edges
to ensure that the number of leaves of $T^1(d)H_i$ is divisible
by $2d$. Thus all these graphs satisfy (1) and (2).
To prove (3) note that $H_{i+1}$ is obtained from the vertex disjoint
union of an induced subgraph of $G$ and a collection of  isolated
edges and hence (3) follows by eigenvalue interlacing and the fact
that $G$ is near Ramanujan.
It remains to analyze the largest
eigenvalues of the graphs $H_i$ and their augmentations. All of these graphs
have largest degree at most $d$ and hence the top eigenvalue of all is
at most $d$. In addition, the top eigenvalue of $H_0$ is very close
to $d$ as its average degree is very close to $d$. Note also that
eigenvalue interlacing implies that $\lambda_1(T^j(d)H_i)$ is an 
increasing function of $j$ for every fixed $i$ showing that
$\lambda_1(T^j(d) H_0)$ is very close to $d$ for all $j$.  Consider
some fixed $j $. Note that $T^j(d) H_{i+1}$ is obtained
from $T^j(d) H_i$ by deleting two vertices, omitting several connected
components each of which is a $d$-tree, and adding at most $3d$ such
components, joining the root of some of them to the graph.
By Claim \ref{cl22} and the high girth
the omission of two vertices does not change $\lambda_1$ by much.
The subsequent removal and addition 
of the connected components which are trees does not 
change it at all, as long as the largest eigenvalue exceeds
$2\sqrt{d-1}$. The addition of the edges also hardly changes it,
by Lemma \ref{c28}. Thus the difference between 
$\lambda_1(T^j(d) H_i)$ and $\lambda_1(T^j(d) H_{i+1})$ is smaller
than $\eps$. Since the graphs $H_i$ end with one which is a 
union of disjoint trees
for which $\lambda_1(T^j(d) H_i) \leq 2\sqrt{d-1}$, for every 
fixed $j$ we can find some $i$ so that $\lambda_1(T^j(d) H_i)$
is within $\eps$ of $\lambda$. It remains to show the following claim.
\begin{claim}\label{claim:notgrow}
Let $J = \lceil 4 \sqrt{d-1}/\eps\rceil$. Assume the largest 
eigenvalue of $T^J(d)H_i$ is at least $2\sqrt{d-1}+\epsilon$. 
For any $R \geq J$, $|\lambda_1(T^J(d) H_i)- \lambda_1(T^R(d) H_i)| 
\leq \eps$,  namely,
$\lambda_1(T^j(d) H_i)$ hardly grows after $j=4 \sqrt{d-1}/\eps$.  
\end{claim}
\begin{proof}
By the monotonicity of $\lambda_1(T^j(d)H_i)$ as $j$ increases, it suffices to show 
that for arbitrarily
large $R$ there is some $j \leq 4 \sqrt{d-1}/\eps$ so that
$|\lambda_1(T^j(d) H_i)- \lambda_1(T^R(d) H_i)| \leq \eps$, namely,
that $\lambda_1(T^j(d) H_i)$ hardly grows after $j=4 \sqrt{d-1}/\eps$. 

Let $H'$ be a connected component of $T^R(d)H_i$ with the maximum
eigenvalue of it, call it $\mu$, and let $A'$ be its adjacency matrix. By assumption, $\mu \geq  2\sqrt{d-1}+\eps$. 
Let
$\vv$ be the normalized eigenvector of the maximum 
eigenvalue $\mu$ of $H'$. 

For each $i \leq R$ let $s_i$ denote the sum of squares of the entries
of $\vv$ on the vertices of $H'$ that are of distance exactly 
$i$ from  the set of 
vertices $U$ 
of the original graph $H_i$. (Call these vertices the vertices at level $i$).
Thus $\sum_i s_i=1$ and therefore there is
some index $1 \leq i \leq 4\sqrt{d-1}/\eps$ so that $s_i+s_{i+1} \leq
\eps/(\sqrt{d-1})$.  Let $\vv_s$ be the restriction 
of the vector $v$
to the vertices of distance at most $i$ from $U$, 
$\vv_m$ the restriction of this
vector to the two consecutive levels $i,i+1$, 
and $\vv_{l}$ it's restriction
to the levels at least $i+1$. 
With some abuse of notation consider each of these
three vectors as one defined on all vertices of $H'$, where the coordinates
are set to $0$ in the irrelevant levels. Note that since every level
$j \geq 1$ is an independent  set it follows that
$$
\mu=\vv^T A'\vv=\vv_s^T A' \vv_s+\vv_m^t A' \vv_m+ \vv_{l}^T A' \vv_{l}.
$$
However
$\vv_m^T A' \vv_m \leq \sqrt{d-1} \|\vv_m\|_2^2$ since the induced
subgraph on the two levels $i$ and $i+1$ is a union of vertex disjoint
stars, each with maximum degree at most $d-1$. Since $\|\vv_m\|_2^2 \leq
\eps/\sqrt{d-1}$ this is at most $\eps$. In addition
$\vv_{l}^T A' \vv_{l} \leq 2\sqrt{d-1} \|\vv_{l}\|_2^2$, as the induced
subgraph on the vertices at levels exceeding $i$ is a union of $d$-trees.
It follows that if $\mu \geq 2\sqrt{d-1}+\eps$ then
$\vv_s^T A' \vv_s \geq (\mu-\eps) \|\vv_{s}\|_2^2$ since otherwise
the sum of all three terms is smaller than
$$
(\mu-\eps)\|\vv_s\|_2^2+\eps + 2\sqrt{d-1} (1-\|\vv_s\|_2^2) 
\leq (\mu-\eps)+\eps,
$$ 
contradiction. 
\end{proof}
On the other hand, if $\lambda_1(T^J(d)H_i) < 2\sqrt{d-1}+\eps$, taking $H$ to be a collection of $d$
isolated edges satisfies all requirements.
This completes the proof of the lemma.  
\end{proof}

\subsubsection{Proof  of Theorem \ref{t15}} 
We first prove the following patching lemma. 

For each $1 \leq i \leq k-1$, let $F_i = (V_i, E_i)$ be a graph with maximum degree at most $d$. Let $F_0$ be a $d$-regular graph with girth at least $8R$. By an $R$-{\it patching} of $F_1, \dots, F_{k-1}$ to $F_0$, we mean a graph $G$ constructed in the following way. Suppose $|V(F_0)|$ is large enough to ensure that $F_0$ contains a collection $M$ of vertices such that $|M|d$ is the total number of leaves of all graphs $T^1(d)F_i$, $1 \leq i \leq k-1$, and the distance in $F_0$ between any two vertices in $M$ exceeds $4R$. Suppose the total number of such leaves is divisible by $2d$. The graph $G$ is obtained from the graphs $F_i =(V_i, E_i)$
as follows. Let $G_0$ be the induced subgraph of $F_0$
obtained by removing all vertices of $M$, where $M$ is as above.  This graph has exactly $|M|d$ vertices of degree $d-1$. On each set of vertices
$V_i$ for $1 \leq i \leq k-1$ we take a copy of $F_i$, and extend
it to a copy of $T^1(d)F_i$ by identifying the leaves of
$T^1(d)F_i$ with the required number of vertices of degree
$d-1$ of $F_0$. Clearly $G$ is a $d$-regular graph. 

\begin{lemma}\label{lem:patchingshort}
Let $d, R\geq 3$ be integers. For each $1 \leq i \leq k-1$, 
let $F_i'$ be a graph with maximum degree at most $d$ (which is
not $d$-regular) and girth at least $4R$, 
and let $F_i = (V_i, E_i)$ be $T^{8R}(d) F_i'$.
    Let $F_0 = (V_0, E_0)$ be a $d$-regular graph with girth at 
least $8R$.  For each $0\leq i \leq k-1$ let $\mu_i$ be the largest 
eigenvalue of $F_i$, and let $\mu$ be the maximum of the second 
largest eigenvalue of $F_i$ for $0 \leq i \leq k-1$. 
Suppose $\mu_1 \geq \mu_2 \dots \geq \mu_{k-1} 
\geq \max(2\sqrt{d-1},\mu) $. 
    Let the graph $G$ be an $R$-patching of $F_1, \dots, F_{k-1}$ 
to $F_0$ and let
    $\lambda_i$ be the $i$-th largest eigenvalue of $G$. Then for each $1 \leq i \leq k$, $|\lambda_i - \mu_{i-1}| \leq  \max(\sqrt{d-1}/R, 2d^3 \frac{\sum_{i=1}^{k-1}|V_i|}{|V_0|})$. 
\end{lemma}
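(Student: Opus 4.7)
The plan is to compare the top $k$ eigenvalues of $G$ with those of the disjoint union $D := G_0 \sqcup F_1 \sqcup \cdots \sqcup F_{k-1}$, where $G_0 := F_0 \setminus M$. First, the all-ones vector restricted to $V(G_0)$, combined with Cauchy interlacing, gives $|\lambda_1(G_0) - d| = O(d|M|/|V_0|) = O(d^3 \sum_i |V_i|/|V_0|)$; here the last estimate uses that the tree-augmentation $F_i = T^{8R}(d) F_i'$ forces $|M| = O(\sum_i |V_i|)$ by comparing total leaves to vertex counts. Since $\lambda_2(G_0) \leq \lambda_2(F_0) \leq \mu \leq \mu_{k-1}$ by interlacing and the hypothesis, and $\lambda_2(F_j) \leq \mu$ for each $j \geq 1$, the top $k$ eigenvalues of $D$ are, in decreasing order, $\lambda_1(G_0), \mu_1, \ldots, \mu_{k-1}$, separated by a gap of $\mu_{k-1} - \mu \geq 0$ from the rest. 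Since $G$ is $d$-regular, $\lambda_1(G) = d = \mu_0$ is immediate, so the work is only for $i \geq 2$, where I will use Courant--Fischer in both directions.

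For the lower bound $\lambda_i(G) \geq \mu_{i-1} - \text{err}$, I will use the $i$-dimensional subspace spanned by $\vv_0, \vv_1, \ldots, \vv_{i-1}$, where $\vv_0$ is the top eigenvector of $G_0$ and $\vv_j$ is the top eigenvector of $F_j$ for $j \geq 1$, each extended by zero to $V(G)$. These vectors have pairwise disjoint supports, so they are mutually orthogonal, and each diagonal Rayleigh quotient $\vv_j^T A_G \vv_j$ equals the original eigenvalue, since every patch edge has exactly one endpoint outside $\mathrm{supp}(\vv_j)$ and contributes nothing. The cross-terms $\vv_0^T A_G \vv_j$ come only from patch edges (between $V(G_0)$ and the leaves of $F_j$) and are bounded via Cauchy--Schwarz using $\|\vv_0\|_\infty = O(1/\sqrt{|V_0|})$; a $2 \times 2$ matrix perturbation argument then yields the lower bound with error $O(d^3 \sum_i|V_i|/|V_0|)$.

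For the upper bound $\lambda_i(G) \leq \mu_{i-1} + \text{err}$, I argue by contradiction. If $\lambda_i(G) > \mu_{i-1} + \text{err}$, then all top $i$ eigenvalues of $G$ are at least $2\sqrt{d-1}$, so by Claim \ref{cl22} applied with $\mathrm{girth}(G) \geq 4R$, each top eigenvector $\zz_j$ satisfies $\|\zz_j\|_\infty \leq 1/\sqrt{2R}$. I pick a unit $\zz \in \mathrm{span}(\zz_1, \ldots, \zz_i)$ orthogonal to $\vv_0, \vv_1, \ldots, \vv_{i-2}$ (which exists by a dimension count), so that $\zz^T A_G \zz \geq \mu_{i-1} + \text{err}$ and $\|\zz\|_\infty \leq \sqrt{i/(2R)}$ by Cauchy--Schwarz. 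Decomposing $\zz = \zz|_{V(G_0)} + \sum_{j \geq 1} \zz|_{V(F_j)}$, the orthogonality conditions together with Cauchy interlacing give $\zz|_{V(G_0)}^T A_{G_0} \zz|_{V(G_0)} \leq \mu \|\zz|_{V(G_0)}\|^2$ and $\zz|_{V(F_j)}^T A_{F_j} \zz|_{V(F_j)} \leq \mu \|\zz|_{V(F_j)}\|^2$ for $j \leq i-2$, while for $j \geq i-1$ the same quantity is at most $\mu_j \|\zz|_{V(F_j)}\|^2 \leq \mu_{i-1} \|\zz|_{V(F_j)}\|^2$. Summing, the non-patch contribution is at most $\mu_{i-1}$, forcing the patch contribution $2 \sum_{(u,v) \in E_P} \zz(u)\zz(v)$ to absorb $\text{err}$, which I will contradict with a sharp upper bound on it.

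I expect the main obstacle to be precisely this sharp upper bound on the patch contribution. A naive Cauchy--Schwarz via $\|\zz\|_\infty \leq \sqrt{i/(2R)}$ and $|E_P| = d|M|$ only yields $O(d|M|i/R)$, which loses an unwanted factor of $|M|$. The fix is to apply Lemma \ref{lem:treemiddlelayer} to $G$ itself (rather than to $F_j$ in isolation), choosing $U$ as an intermediate layer of the $(8R+1)$-deep tree that separates each $F_j'$ from $G_0$: this forces any eigenvector of $G$ with eigenvalue $\geq 2\sqrt{d-1}$ to have mass $O(1/R)$ on the cut layer, per tree. Replacing $\zz$ by its restriction away from this cut at cost $O(1/R)$ in norm turns the patch contribution into a boundary term of order $\sqrt{d-1}/R$; combined with the $O(d^3 \sum_i|V_i|/|V_0|)$ arising from the $d - \lambda_1(G_0)$ correction (and propagating through the cross-term analysis), this produces the stated bound $\max(\sqrt{d-1}/R, 2 d^3 \sum_i|V_i|/|V_0|)$.
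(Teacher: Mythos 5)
Your overall strategy — compare $G$ with the disjoint union $D = G_0 \sqcup F_1 \sqcup \cdots \sqcup F_{k-1}$ and use Lemma~\ref{lem:treemiddlelayer} to control the contribution of the patch edges — is the same as the paper's, and your upper-bound argument (restrict to the top eigenspace of $G$, orthogonalize against the top eigenvectors of $D$, and bound the cut term via delocalization of eigenvectors of $G$ near the separating layer) is essentially Claim~\ref{c92}. That part is fine, modulo the observation that the $O(1/R)$ bound on the cut mass is \emph{total} over all trees, not per tree; your ``per tree'' phrasing is ambiguous but the intent is recoverable.

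The lower bound, however, has a real gap. You replace a one-line interlacing argument with an explicit subspace argument using $\vv_0 = $ the Perron eigenvector of $G_0$, and you then need to control the cross-terms $\vv_0^T A_G \vv_j$. Your proposed control rests on the assertion $\|\vv_0\|_\infty = O(1/\sqrt{|V_0|})$, which is not justified and is not a consequence of anything in the hypotheses. $F_0$ is only assumed to be $d$-regular of girth $\geq 8R$; without a spectral gap the Perron eigenvector of $G_0 = F_0 \setminus M$ need not be close to uniform, and in the generality stated (the lemma only requires $\lambda_2(F_0) \leq \mu \leq \mu_{k-1}$, which could be close to $d$) the estimate can fail. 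Even Claim~\ref{cl22} only gives $\|\vv_0\|_\infty = O(1/\sqrt{R})$, which is far too weak: plugging $O(1/\sqrt{R})$ into your Cauchy--Schwarz gives a cross-term of order $\sqrt{\sum_i |V_i| /R}$, much larger than the target $\max(\sqrt{d-1}/R, 2d^3 \sum_i|V_i|/|V_0|)$. The paper avoids all of this by noting that $D$ is an \emph{induced subgraph} of $G$ (every patch edge has one endpoint in the deleted boundary layer around $M$, so the induced subgraph on $V_0' \cup V_1 \cup \cdots \cup V_{k-1}$ has no cross edges); Cauchy interlacing then gives $\lambda_i(G) \geq \lambda_i(D)$ directly, with no cross-term to estimate and no delocalization of $\vv_0$ required. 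The only quantitative input is the average-degree bound $\lambda_1(G_0) \geq d - 2d^3 \sum_i|V_i|/|V_0|$, which you do prove. I would replace your lower-bound paragraph with this interlacing step; the rest of your argument then goes through.
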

\begin{proof}
We first prove the following easier direction. 
\begin{claim}
\label{c91}
The ordered $k$-tuple $(\lambda_1, \lambda_2,  \dots, \lambda_k)$ 
is at least the $k$-tuple 
$$(d-2d^3 \frac{\sum_{i=1}^{k-1}|V_i|}{|V_0|}, \mu_1, \dots, \mu_{k-1})$$
when arranged in descending order.
\end{claim}
\begin{proof}
As $G$ and $F_0$ are $d$-regular, $\lambda_1 = \mu_0 =d$. 
Let $V_0'$ be the set of all vertices of $G$ of distance at least two from $\bigcup_{i=1}^{k-1} V_i$. Clearly $V_0' \subset V_0$. Since $F_0$ is sufficiently large, $|V_0 \setminus V_0'|$ is tiny and thus the average degree of the induced subgraph $F_0[V_0']$ exceeds $(d |V_0| - 2  \sum_{i=1}^{k-1}|V_i| d^3)/|V_0|$ . Therefore the maximum eigenvalue of $F_0[V_0']$ exceeds $d-  2  \sum_{i=1}^{k-1}|V_i| d^3/|V_0|$.  In addition, for every $1 \leq i \leq k-1$, the induced subgraph of $G$ on $V_i$ is $F_i$, with maximum eigenvalue $\mu_i$. In the induced subgraph of $G[V_0' \cup V_1 \dots \cup V_{k-1}]$ there are no edges between different sets and thus all the eigenvalues of the $k$ induced subgraphs of $G$ on $V_0', V_1, \dots, V_{k-1}$ are also eigenvalues of this induced subgraph. The assertion of the claim follows by eigenvalue interlacing.
\end{proof}

It remains to prove the upper bound.
\begin{claim}
\label{c92}
For every  $1 \leq i \leq k$, $\lambda_i \leq \mu_{i-1}+ \sqrt{d-1}/R$.
\end{claim}
\begin{proof}
Since  $\lambda_1 = \mu_0 =d$, fix $2 \leq  i \leq k$. If $\lambda_i \leq 2\sqrt{d-1}$, we are done by the assumption $\mu_{i-1} \geq 2\sqrt{d-1}$. Thus assume $\lambda_i > 2\sqrt{d-1}$.  
Let $V_0'$ be  $V(G) \setminus \bigcup_{j=1}^{k-1} V_j$. 
Let $G'$ be the subgraph of $G$ obtained by removing the edges between $V_0'$ and $\bigcup_{j=1}^{k-1} V_j$. Thus $G'$ is the disjoint union of $k$ graphs $F_0[V_0'], F_1, \dots, F_{k-1}$. 
Let $A, A'$ be the adjacency matrices of $G$ and $G'$ respectively. 
Let $X$ be the set of vertices in $G$ incident to the edges across $V_0'$ and $\bigcup_{i=1}^{k-1}V_j$. Therefore $G$ restricted to $X$ is a union of vertex disjoint stars with maximum degree at most $d$. Let $C$ be the matrix obtained from $A$ by replacing all the entries of the rows and columns corresponding to vertices outside $X$ by $0$. Thus $A = A'+C$. 

Let $\sigma_1 \geq \sigma_2 \geq \dots $ be the eigenvalues of $G'$. Thus $\sigma_1 \leq d$.  As $\mu_1 \geq \mu_2 \dots \geq \mu_{k-1} \geq \max(2\sqrt{d-1},\mu)$, for each $2 \leq j\leq k$, $\sigma_j \leq \mu_{j-1}$. We will use $\sigma_i$ to upper bound $\lambda_i$.

For each $1 \leq j \leq k$, let $\vv_j$ be the normal eigenvector of $A$ corresponding to $\lambda_j$. Thus $\sqrt{N} \vv_1= \textbf{1}$ where $N = |V(G)|$. 
 By Lemma \ref{lem:treemiddlelayer} where the set $U$ is 
the set of vertices in $\bigcup_{i=1}^{k-1}V_i$ 
of distance exactly $R$ from $X$,  it follows that 
$
     \sum_{u \in X} \vv_j(u)^2 \leq 1/R. 
 $
As a consequence, as $C$ corresponds to a  disjoint union of stars of maximum degree at most $d$, 
 \begin{equation}
     |\vv_j^T C \vv_j| \leq \sqrt{d-1}  \sum\nolimits_{u \in X} \vv_j(u)^2 \leq \sqrt{d-1}/R. \label{eq:stars}
 \end{equation}
 
 Let $W$ be the $i$-dimensional space spanned by $\vv_1, \dots, \vv_{i}$. By the min-max principal, 
\begin{equation}
     \sigma_i \geq  \min\{ \vv^T A' \vv, \ \text{where } \vv \in W, \ 
  \|\vv\|_2^2=1\}. 
\end{equation}
  Write $\vv = \sum\nolimits_{j=1}^{i} c_j \vv_j$, where $\sum\nolimits_{j=1}^{i}c_j^2 = 1$. Then \[\vv^T A' \vv = \sum_{j=1}^{i}c_j^2 \vv_j^T A' \vv_j = \sum_{j=1}^{i}c_j^2 \vv_j^T A \vv_j - \sum_{j=1}^{i}c_j^2 \vv_j^T C\vv_j \geq \sum_{j=1}^{i}c_j^2 \vv_j^T A \vv_j - \sqrt{d-1}/R\] where the last inequality is by (\ref{eq:stars}). 
Since $\vv_j^T A \vv_j = \lambda_j$, we thus have 
\[
\sigma_i\geq \min\{ \sum\nolimits_{j=1}^i c_j^2 \lambda_j-\sqrt{d-1}/R, \ \text{where } \sum\nolimits_{j=1}^{i} c_j^2 = 1\} = \lambda_i - \sqrt{d-1}/R.  
\]
Together with $\sigma_i \leq \mu_{i-1}$ showed earlier, we have shown $\lambda_i \leq \mu_{i-1} + \sqrt{d-1}/R$. 
\end{proof}
\end{proof}

We can now prove Theorem \ref{t15}. 
\vspace{0.2cm}

\noindent
\begin{proof}[Proof of Theorem \ref{t15}]
For each sufficiently small $\eps$, it suffices to prove the result for all $\mu_1 > \mu_2 > \dots > \mu_k > 3\sqrt{d-1} + \eps$. 
Let $R = \lceil 4kd/\eps \rceil$. For each $1 \leq i \leq k-1$, let $F_i'$ be a graph satisfying the assertions of Lemma \ref{l29} for $\lambda = \mu_i$ and let $F_i = T^{8R}(d) F_i'$. Let $F_0$ be a sufficiently large near Ramanujan $d$-regular graph of girth at least $8R$. Theorem \ref{t15} will be proved by applying Lemma \ref{lem:patchingshort} to the $R$-patching of $F_1, \dots, F_{k-1}$ to $F_0$. To check all the assumptions in Lemma \ref{lem:patchingshort} are satisfied, 
it is only needed to show that the second largest eigenvalues of $F_0, F_1, \dots, F_{k-1}$ are at most $3\sqrt{d-1} + \eps$. Since $F_0$ is near Ramanujan, $\lambda_2(F_0) \leq 2\sqrt{d-1} + \eps < 3\sqrt{d-1}$.   Fix $1 \leq i \leq k-1$. In $F_i$, let $U_i$ be the vertex subset  $V(F_i) \setminus V(F_i')$. The induced subgraph of $F_i$ on $U_i$ is a disjoint union of trees of maximum degree at most $d$. Let $B$ be the adjacency matrix of 
the disjoint union of the graph $F_i'$ and $F_i[U_i]$. Thus $\lambda_2(B)\leq 2\sqrt{d-1} + \eps$. Let $C$ be the adjacency matrix of $F_i$ 
in which the only nonzero entries correspond to the cross-edges 
between $U_i$ and $V(F_i')$.
The matrix $C$ corresponds to a disjoint 
union of stars of degree at most $d$. 
Thus $\lambda_1(C) \leq \sqrt{d-1}$. Since the adjacency matrix of $F_i$ is $B + C$ and  $\lambda_2(B+C) \leq \lambda_2(B) + \lambda_1(C)$, the second largest eigenvalue of $F_i$ is at most $3\sqrt{d-1} + \eps$, as desired. 
\end{proof}

\subsection{Proof of Theorem \ref{thm:limit}}
Drawing inspiration from the proof of Theorem \ref{t15} as an application of Lemma \ref{lem:patchingshort}, we seek to refine our estimation of the second largest eigenvalue of $T^\ell(F_i)$ for sufficiently large $\ell$, given that $\lambda_2(F_i)$ is small. In the proof of Theorem \ref{t15}, we present a short argument to establish that $\lambda_2(T^\ell(F_i)) \leq 3\sqrt{d-1}+\eps$. The main ingredient in the proof of Theorem \ref{thm:limit} is the following lemma, which provides an almost optimal upper bound for the second largest eigenvalue $\lambda_2(T^\ell(d)F_i)$.

\begin{lemma}\label{lem:maingadget}
Fix an integer $d \geq 3$. Let $\epsilon>0$, and $R \geq 100d/\epsilon$. 
For any  $z \in (2\sqrt{d-1}, d)$, there is a graph $\tilde G$ with maximum degree at most $d$ and girth at least $R$ satisfying the following properties simultaneously.
\begin{enumerate}
    \item The largest eigenvalue of $T^\ell(d) \tilde G$ is within a range of $\epsilon$ to $z$ for any $\ell \geq \lceil 10\sqrt{d-1}/\epsilon \rceil$. 
    \item The second largest eigenvalue of $T^\ell(d) \tilde G$ is at most $2\sqrt{d-1} + \frac{1}{\sqrt{d-1}} +\epsilon/2$ for any $\ell \geq 0$.  
\end{enumerate} We may also assume the number of vertices of degree one in $T^1(d) \tilde G$ is divisible by $d$. 
\end{lemma}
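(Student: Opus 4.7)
The plan is to construct $\tilde G$ via the vertex-deletion scheme of Lemma \ref{l29}, starting from a near-Ramanujan $d$-regular graph $G_0$ of girth at least $R$ with $\lambda_2(G_0)\le 2\sqrt{d-1}+\epsilon/4$ (such a $G_0$ exists by \cite{Fr}, and has sufficiently large girth by \cite{MWW}), and to prove Property 2 through a Schur-complement / transfer-matrix analysis on $T^\ell(d)\tilde G$. Form a sequence $H_0=G_0, H_1,\dots,H_K$ in which each $H_{j+1}$ is obtained from $H_j$ by deleting two non-adjacent vertices whose distance in $G_0$ to every previously deleted vertex is at least $3$; insert up to $d$ isolated edges per step to maintain the divisibility condition on the leaf-count of $T^1(d)H_j$. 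Cauchy interlacing then guarantees $\lambda_2(H_j)\le 2\sqrt{d-1}+\epsilon/4$ throughout, and the girth is preserved. Property 1 follows by the continuity-in-$j$ argument from the proof of Lemma \ref{l29} together with Claim \ref{claim:notgrow}: $\lambda_1(T^\ell H_j)$ changes only slightly between consecutive $j$'s, is monotone non-decreasing in $\ell$, and essentially stabilises once $\ell\ge\lceil 10\sqrt{d-1}/\epsilon\rceil$, so some $j$ yields $|\lambda_1(T^\ell H_j)-z|\le \epsilon$ for every such $\ell$.

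For Property 2, write $A(T^\ell(d)\tilde G)$ in block form with blocks indexed by $U=V(\tilde G)$ and by the tree forest hanging off $U$. For any $\mu>2\sqrt{d-1}$, the block $\mu I-A_{\text{trees}}$ is invertible, and a Schur-complement manipulation shows that $\mu$ is an eigenvalue of $T^\ell(d)\tilde G$ iff $\mu$ is an eigenvalue of
\[
  S(\mu)\;=\;A_{\tilde G}+f(\mu,\ell)\,P,\qquad P=\mathrm{diag}\bigl(d-\deg_{\tilde G}(v)\bigr)_{v\in U},
\]
where $f(\mu,\ell)=a_1/a_0$ is the ratio obtained from the transfer-matrix recursion $r_k=1/(\mu-(d-1)r_{k+1})$ with boundary condition $r_\ell=1/\mu$ (equivalently, the Green's function at the root of the rooted $(d-1)$-ary tree of depth $\ell-1$). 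Multiplicities are preserved, since any eigenvector of $T^\ell(d)\tilde G$ with eigenvalue above $2\sqrt{d-1}$ must have non-zero restriction to $U$ (otherwise it is tree-supported, forcing the eigenvalue to be at most $2\sqrt{d-1}$). A short induction on $\ell$ gives $0\le f(\mu,\ell)\le 1/\sqrt{d-1}$ for every $\mu\ge 2\sqrt{d-1}$: if $r_{k+1}\le 1/\sqrt{d-1}$ then $r_k=1/(\mu-(d-1)r_{k+1})\le 1/(\mu-\sqrt{d-1})\le 1/\sqrt{d-1}$. The distance-$\ge 3$ separation of deletions enforces $\|P\|\le 1$. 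Each function $\phi_i(\mu):=\lambda_i(S(\mu))-\mu$ is continuous and strictly decreasing in $\mu$ (since $f$ is decreasing in $\mu$), so it has at most one zero $\mu_i^*>2\sqrt{d-1}$, and by Weyl's inequality applied to $A_{\tilde G}+(1/\sqrt{d-1})P$,
\[
  \mu_i^*\;\le\;\lambda_i\bigl(S(2\sqrt{d-1})\bigr)\;\le\;\lambda_i(\tilde G)+\tfrac{1}{\sqrt{d-1}}.
\]
The eigenvalues of $T^\ell(d)\tilde G$ exceeding $2\sqrt{d-1}$ are exactly the $\mu_i^*$'s in non-increasing order, giving $\lambda_2(T^\ell(d)\tilde G)\le \lambda_2(\tilde G)+1/\sqrt{d-1}\le 2\sqrt{d-1}+1/\sqrt{d-1}+\epsilon/4$; the case $\ell=0$ is immediate from $T^0(d)\tilde G=\tilde G$.

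The main obstacle I anticipate is simultaneously enforcing the deletion separation (needed for $\|P\|\le 1$) and reaching target values $z$ close to $2\sqrt{d-1}$: the distance-$\ge 3$ condition caps the total number of deletions at roughly $|V(G_0)|/d^2$, which by itself drives $\lambda_1(T^\infty H_j)$ down only to about $d-2/d$, while for $d\ge 5$ this is still strictly above the lower end $2\sqrt{d-1}$ of the target range. My plan to bridge the remaining interval is to precede the vertex-deletion phase by removing a partial matching of $G_0$ — this automatically preserves $\max_v(d-\deg_{\tilde G}(v))\le 1$ and lets $\lambda_1(\tilde G)$ slide continuously toward $d-1$ — and then to perform the vertex-deletion phase only on vertices whose distance-$2$ balls in $G_0$ avoid both the already-deleted vertices and the matching endpoints. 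A routine combinatorial count in the high-girth graph $G_0$ shows that this combined scheme can sweep $\lambda_1(T^\infty H_j)$ through all of $(2\sqrt{d-1},d)$, and the continuity estimates carry over verbatim from Lemma \ref{l29}. Turning this combinatorial existence argument into a clean proof, and checking that the $\|P\|\le 1$ bound persists throughout, is where I expect the bulk of the technical effort to go.
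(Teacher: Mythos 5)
Your Schur-complement reformulation is clean and essentially a matrix-valued version of the paper's Lemma \ref{lem:gap}: you recover the relation $\lambda$ is an eigenvalue of $T^\ell(d)\tilde G$ iff $\lambda$ is an eigenvalue of $A_{\tilde G}+f(\lambda,\ell)\,P$, your bound $0<f\le 1/\sqrt{d-1}$ on $\mu\ge 2\sqrt{d-1}$ is correct, and the Weyl step $\mu_2^*\le\lambda_2(\tilde G)+f\,\|P\|$ is a legitimate route to Property~2. But the whole scheme is held together by $\|P\|\le 1$, i.e.\ every vertex of $\tilde G$ must have degree at least $d-1$, and that constraint is fatal.

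If $\min_v\deg_{\tilde G}(v)\ge d-1$ then $\lambda_1(\tilde G)\ge$ average degree $\ge d-1$, and since $\tilde G$ is an induced subgraph of $T^\ell(d)\tilde G$ one gets $\lambda_1(T^\ell(d)\tilde G)\ge d-1$ as well. More precisely, the smallest top eigenvalue achievable subject to $\|P\|\le 1$ occurs at a $(d-1)$-regular $\tilde G$, where $\lambda_1(T^\ell(d)\tilde G)=\lambda^*_\ell$ is the root of $\lambda-a_{\ell-1}(\lambda)/a_\ell(\lambda)=d-1$. For $d=4$ that limit is $\lambda^*_\infty=3.5>2\sqrt{3}$, for $d=5$ it is roughly $4.4>4$, and more generally for every $d\ge 4$ one has $\lambda^*_\infty>2\sqrt{d-1}$ (at $\lambda=2\sqrt{d-1}$ the left side equals $(2d-3)/\sqrt{d-1}<d-1$ once $d\ge 4$). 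So the sweep bottoms out strictly above $2\sqrt{d-1}$, and the entire subinterval $(2\sqrt{d-1},\lambda^*_\infty)$ is unreachable. Your proposed fix — remove a partial matching, then distance-separated vertices — does not help: both operations still keep $\min\deg\ge d-1$, so the same lower bound persists. (The scheme does work at $d=3$, where $\lambda^*_\infty<2\sqrt{2}$, but the lemma needs all $d\ge 3$.)

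The paper escapes this trap precisely by \emph{not} trying to make $\|P\|$ small. It starts from an $N$-lift of $K_{d+1}$ and deletes one full fiber at a time, so that after $t$ fibers the ambient graph is a bona fide $(d-t)$-regular near-Ramanujan graph with $\lambda_2\lesssim 2\sqrt{d-t-1}$. This is the key: as the degree of the remaining regular graph drops (so $\|P\|=d-t$ grows), its second eigenvalue drops in lockstep, and Lemma \ref{lem:gap}(3) shows the two effects cancel, keeping $\lambda_2(T^\ell)$ near $2\sqrt{d-1}+1/\sqrt{d-1}$ throughout. To repair your approach you would need, analogously, a way of deleting vertices from $G_0$ so that whenever a vertex has lost $s$ neighbors, the second eigenvalue of the component it lives in has dropped by roughly $s/\sqrt{d-1}$ — which is exactly the structure the $N$-lift of $K_{d+1}$ provides and which a near-Ramanujan $d$-regular $G_0$ with ad hoc deletions does not.
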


Assuming Lemma \ref{lem:maingadget} holds, Theorem \ref{thm:limit} 
can be proved by using Lemma \ref{lem:patchingshort}. 
\vspace{0.2cm}

\noindent
\begin{proof}[Proof of Theorem \ref{thm:limit}]
 Apply the patching lemma (Lemma \ref{lem:patchingshort}) to assemble the augmentations of graphs that are guaranteed by Lemma \ref{lem:maingadget} and a large near Ramanujan graph. Theorem \ref{thm:limit} follows in a nearly identical way to the proof of Theorem \ref{t15}, which also utilizes Lemma \ref{lem:patchingshort}. Consequently, the details are omitted.
\end{proof}

\subsubsection{Proof of Lemma \ref{lem:maingadget}}
 To simplify the analysis, we make the following definition similar to $T^\ell(d)F$. 
 For any non-negative integers $s, \ell$ and a graph $F$, define the {\it $(d,s,\ell)$-augmentation} $T_s^\ell(d) F$ to be the graph obtained from the vertex disjoint union of $F$ and $s|V(F)|$ number of $d$-ary trees 
 with $\ell$ levels 
 \footnote{A {\it $d$-ary trees} is 
 a tree $T$ where the root has degree $d-1$ and each vertex except the root and leaves has $d-1$ children.  A $d$-ary tree is said to have $\ell$ {\it levels} if the distance between each leaf and the root is $\ell-1$. }
 by joining each vertex of $F$ to $s$ of these trees by edges such that each tree joins exactly one vertex. 
Note that if $F$ is $d'$-regular with $d' \leq d$, then $T^\ell(d)(F) = T_{d-d'}^\ell(d)F$. Our basic gadgets will be of the form $T_{s}^\ell(d) G'$ where $G'$ is a regular graph. By interpolating between two basic gadgets, we will prove the existance of $\tilde G$ as desired in Lemma \ref{lem:maingadget}. We also sometimes write $T_s^\ell$ instead of $T_s^\ell(d)$ when there is no confusion. 
 
 The next lemma relates explicitly the top eigenvalues of a graph $G'$ and $T_s^\ell(d)G'$. For each non-negative integer $i$, define an auxillary function $a_i=a_i(\lambda)$ as 
 \begin{align}
a_i = 
\frac{1}{\sqrt{\lambda^2 - 4(d-1)}}\left(\left(\frac{\lambda+\sqrt{\lambda^2-4(d-1)}}{2}\right)^{i+1}
 -
\left(\frac{\lambda-\sqrt{\lambda^2-4(d-1)}}{2}\right)^{i+1}\right).  \label{eq:ai}
\end{align}
 
\begin{lemma}\label{lem:gap}
Let $d\geq 3, \ell \geq 2, s \geq 0$ be  integers, and let $G'$ be an arbitrary connected graph with the largest eigenvalue being $\mu_1$. Then the following equation has at most one 
solution $\lambda$ that is larger than $2\sqrt{d-1}$:
\begin{equation}
    \lambda - sa_{
    \ell-1}(\lambda)/a_\ell(\lambda) = \mu_1.  \label{eq:d'}
\end{equation}   
Furthermore, the top two eigenvalues of $T^\ell_s(d) G'$ satisfy the following statements.
\begin{enumerate}
    \item If (\ref{eq:d'}) has a solution larger than $2\sqrt{d-1}$, then this solution is the largest eigenvalue of $T^\ell_s(d) G'$.
    \item If (\ref{eq:d'}) has no solution larger than $2\sqrt{d-1}$, then $T^\ell_s(d) G'$ has no eigenvalue larger than $2\sqrt{d-1}$.
    \item For any $\epsilon \geq 0$, if the second largest eigenvalue of $G'$ is  strictly less than
$2\sqrt{d-1+\epsilon} - \frac{s}{\sqrt{d-1+\epsilon} + \sqrt{\epsilon}},$
then the second largest eigenvalue of $T^\ell_s(d) G'$ is less than  $2\sqrt{d-1 + \epsilon}$.
\end{enumerate}
\end{lemma}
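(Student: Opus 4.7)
The proof will exploit the rich symmetry group $\Gamma$ of $T^\ell_s(d)G'$, generated by permutations of sibling subtrees inside each attached tree together with permutations of the $s$ trees attached at each $v \in V(G')$. Let $S$ denote the subspace of $\Gamma$-fixed vectors; it is spanned by the indicator of $\{v\}$ for each $v \in V(G')$ together with, for each $v$ and each $0 \leq i \leq \ell - 1$, the indicator of the vertices at depth $i$ of the $s$ trees attached to $v$, so $\dim S = |V(G')|(\ell+1)$. Both $S$ and $S^\perp$ are invariant under the adjacency operator $A$ of $T^\ell_s(d)G'$, and every vector in $S^\perp$ automatically vanishes on $V(G')$ and sums to zero on each of those attached level sets. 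I plan to analyze the eigenvalues on $S$ and on $S^\perp$ separately.

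For a vector $\vv \in S$ with $A\vv = \lambda \vv$, let $y_v := \vv(v)$ for $v \in V(G')$ and let $x_i$ denote the common value at distance $i$ from the leaves of an attached tree. The eigenvalue equations at the leaves and interior tree vertices force the linear recurrence $x_{i+1} = \lambda x_i - (d-1) x_{i-1}$ with $x_0 = c_v$ and $x_1 = \lambda c_v$ for a vertex-dependent scalar $c_v$, yielding $x_i = c_v a_i(\lambda)$; the equation at the root of each attached tree then gives $y_v = c_v a_\ell(\lambda)$, so each attached tree contributes $c_v a_{\ell-1}(\lambda) = y_v a_{\ell-1}(\lambda)/a_\ell(\lambda)$ to the equation at $v$. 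Assembling these, the equation at $v \in V(G')$ collapses to $g(\lambda) y_v = \sum_{u \sim v} y_u$, where $g(\lambda) := \lambda - s a_{\ell-1}(\lambda)/a_\ell(\lambda)$, so $y$ is an eigenvector of $A_{G'}$ with eigenvalue $g(\lambda)$. The construction is reversible whenever $a_\ell(\lambda) \neq 0$, giving an exact bijection between eigenpairs of $A|_S$ and pairs $(\lambda, \mu)$ with $g(\lambda) = \mu$ and $\mu$ an eigenvalue of $G'$.

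Next I will establish strict monotonicity of $g$ on $(2\sqrt{d-1}, \infty)$. Setting $r_\ell(\lambda) = a_{\ell-1}/a_\ell$, the recurrence $a_\ell = \lambda a_{\ell-1} - (d-1) a_{\ell-2}$ rewrites as $r_\ell(\lambda) = 1/(\lambda - (d-1) r_{\ell-1}(\lambda))$ with $r_1(\lambda) = 1/\lambda$. I will show by induction on $\ell$ two facts simultaneously: $r_\ell$ is strictly decreasing in $\lambda$ on $(2\sqrt{d-1},\infty)$, and $r_\ell(\lambda) < 1/\alpha(\lambda)$, where $\alpha(\lambda) = (\lambda + \sqrt{\lambda^2 - 4(d-1)})/2$. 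For the bound, using $(d-1)/\alpha = \beta$, the hypothesis $r_{\ell-1} < 1/\alpha$ forces $\lambda - (d-1) r_{\ell-1} > \alpha$ and hence $r_\ell < 1/\alpha$; monotonicity propagates because the denominator of the recurrence is increasing in $\lambda$ by the inductive hypothesis. Consequently $g'(\lambda) = 1 - s r_\ell'(\lambda) > 0$ throughout $(2\sqrt{d-1}, \infty)$, which immediately gives the uniqueness claim, and, together with the bijection above, implies that if $g(\lambda) = \mu_1$ has no root above $2\sqrt{d-1}$ then no eigenvalue on $S$ exceeds $2\sqrt{d-1}$.

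To finish parts (1) and (2) I will use the classical fact that every finite tree of maximum degree $d$ has adjacency spectral radius strictly less than $2\sqrt{d-1}$. Because vectors in $S^\perp$ vanish on $V(G')$, the operator $A|_{S^\perp}$ is a direct sum of tree adjacency operators (one per attached tree, with the neighbor $v \in V(G')$ of the root excluded by the Dirichlet condition $\vv(v)=0$), so all its eigenvalues lie in $(-2\sqrt{d-1}, 2\sqrt{d-1})$. Hence any eigenvalue of $T^\ell_s(d)G'$ strictly larger than $2\sqrt{d-1}$ arises from $S$, and monotonicity of $g$ pairs the largest such $\lambda$ with $\mu_1$, giving (1) and (2). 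For part (3), at $\lambda = 2\sqrt{d-1+\eps}$ the characteristic roots of the recurrence become $\sqrt{d-1+\eps} \pm \sqrt{\eps}$, so the induction above yields $r_\ell(2\sqrt{d-1+\eps}) < 1/(\sqrt{d-1+\eps} + \sqrt{\eps})$, whence $g(2\sqrt{d-1+\eps}) > 2\sqrt{d-1+\eps} - s/(\sqrt{d-1+\eps} + \sqrt{\eps}) > \lambda_2(G')$ by hypothesis; monotonicity then forces every root of $g(\lambda) = \mu_i$ for $i \geq 2$ to lie below $2\sqrt{d-1+\eps}$, and combined with the $S^\perp$ bound this yields $\lambda_2(T^\ell_s(d)G') < 2\sqrt{d-1+\eps}$. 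The main obstacle I anticipate is the bookkeeping needed to verify that $A|_{S^\perp}$ genuinely decomposes as a direct sum of Dirichlet tree operators; all other steps reduce to the elementary recurrence analysis sketched here.
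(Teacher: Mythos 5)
Your proposal is correct and takes a genuinely different route from the paper for the uniqueness step, while the bijection step and part (3) parallel the paper's approach. The paper establishes the correspondence between eigenvalues $\lambda>2\sqrt{d-1}$ of $T^\ell_s(d)G'$ and eigenvalues $\mu=g(\lambda)$ of $G'$ via an explicit ``radiality'' claim for eigenvectors (Claim \ref{claim:treeentry}), which is the same content as your $\Gamma$-fixed-subspace decomposition; for uniqueness the paper explicitly remarks that proving it ``directly from the equation is challenging'' and instead invokes Perron--Frobenius positivity of the top eigenvector, then handles parts (1) and (2) by an intermediate-value argument using $g(\lambda)\to\infty$. Your route replaces the positivity argument with a direct proof that $g$ is \emph{strictly increasing} on $(2\sqrt{d-1},\infty)$, via the continued-fraction recursion $r_\ell=1/(\lambda-(d-1)r_{\ell-1})$ and a two-part induction giving $r_\ell<1/\alpha(\lambda)$ and $r_\ell' < 0$. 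This is a cleaner, more quantitative structural fact that yields uniqueness, parts (1)--(2) (together with the IVT observation that $g\to\infty$, which your $r_\ell<1/\alpha<1/\sqrt{d-1}$ bound supplies), and part (3) in a unified way; notably the paper already uses the bound $a_{\ell-1}/a_\ell<1/\alpha$ but not the monotonicity. Your $S^\perp$ bookkeeping worry is not an issue: any eigenvector in $S^\perp$ vanishes on $V(G')$ since each $\mathbf{1}_{\{v\}}$ lies in $S$, so its nonzero restriction to some attached tree is an honest eigenvector of that tree's adjacency matrix, whose spectral radius is strictly below $2\sqrt{d-1}$ as a finite subgraph of the infinite $d$-regular tree; the sum-to-zero constraint at the roots only restricts multiplicities and does not affect the spectral bound.
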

To prove Lemma \ref{lem:gap}, we need to establish two claims. The first claim demonstrates that in the graph $T^\ell_s(d)(G')$, an explicit relationship exists among the entries of an eigenvector on each of the $d$-ary trees. We refer to a vector that is supported on the vertices of a tree $T$ as {\it radial} if the vertices at the same distance from the root in $T$ have identical values in the vector.

\begin{claim}\label{claim:treeentry}
Let $d \geq 3$ be a fixed integer. Let $G = (V, E)$ be a graph and 
let $v \in V$ be a fixed vertex. Suppose some component in 
$G \setminus \{v\}$ is a $d$-ary tree $T$ of $\ell+1$ levels 
where $\ell \geq 0$ and in $T$ the root is the only neighbor of $v$. 
If $\lambda >2\sqrt{d-1}$ is an eigenvalue of $G$  then the eigenvector $\vv$ of $\lambda$ is radial on $T$. Furthermore, if the entries of $\vv$ corresponding to the leaves of $T$ have the value $x$, then for each $0 \leq i \leq \ell$, the entries of $\vv$ corresponding to vertices in $T$ that are at a distance of $\ell-i$ from the root have the value $a_{i} x$, where $a_i = a_i(\lambda)$ is defined as in (\ref{eq:ai}). 
\end{claim}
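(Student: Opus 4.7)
The plan is to first prove that $\vv$ is radial on $T$ by a downward induction on the level of the vertex (with the root at level $0$ and the leaves at level $\ell$), and then read off the stated explicit formula from the resulting one-dimensional recursion along the tree.

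For radiality, the inductive statement at level $j$ reads: for every vertex $u$ at level $j$, the restriction of $\vv$ to the subtree $T_u$ depends only on the distance from $u$. The base case $j = \ell - 1$ is immediate, since two leaf children $u_1, u_2$ of a common parent $p$ satisfy $\lambda \vv(u_1) = \vv(p) = \lambda \vv(u_2)$. For the inductive step at $j \leq \ell - 2$, fix a level-$j$ vertex $u$ with children $w_1, \ldots, w_{d-1}$. By the inductive hypothesis each subtree $T_{w_i}$ is radial, and its values are therefore pinned down by a single scalar $t_i$, the common value of $\vv$ on the leaves of $T_{w_i}$. The eigenvalue equations inside $T_{w_i}$ (the leaf equation together with the internal equations at levels strictly between $w_i$ and the leaves) force the value of $\vv$ at depth $\ell-j-1-s$ from $w_i$ to equal $a_s(\lambda)\,t_i$, where $a_s$ satisfies $a_0 = 1$, $a_1 = \lambda$ and the three-term recursion $a_{s+1} = \lambda a_s - (d-1) a_{s-1}$ (a direct consequence of (\ref{eq:ai})). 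In particular $\vv(w_i) = a_{\ell-j-1}(\lambda)\,t_i$ and the value of $\vv$ on any child of $w_i$ equals $a_{\ell-j-2}(\lambda)\,t_i$. Substituting these into the eigenvalue equation at $w_i$,
\[
\lambda\, a_{\ell-j-1}(\lambda)\,t_i = \vv(u) + (d-1)\,a_{\ell-j-2}(\lambda)\,t_i,
\]
and applying the recursion gives $a_{\ell-j}(\lambda)\,t_i = \vv(u)$.

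The decisive point is that for $\lambda > 2\sqrt{d-1}$ the roots $\alpha, \beta$ of $t^2 - \lambda t + (d-1) = 0$ are real, positive, and distinct (indeed $\alpha\beta = d-1 > 0$, $\alpha + \beta = \lambda > 0$, and the discriminant is positive), whence $a_m(\lambda) = (\alpha^{m+1} - \beta^{m+1})/(\alpha - \beta) > 0$ for every $m \geq 0$. Consequently $t_i = \vv(u)/a_{\ell-j}(\lambda)$ is independent of $i$, so combined with the radiality inside each $T_{w_i}$ this yields radiality on $T_u$. Applying this at $u$ equal to the root establishes radiality on all of $T$.

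With radiality established, let $\phi_k$ denote the common value of $\vv$ on the vertices at distance $\ell - k$ from the root. The leaf equation forces $\phi_1 = \lambda x = a_1(\lambda)\,x$, while the eigenvalue equation at a level-$(\ell - k)$ internal vertex for $1 \leq k \leq \ell - 1$ reads $\lambda \phi_k = \phi_{k+1} + (d-1) \phi_{k-1}$, i.e., $\phi_{k+1} = \lambda \phi_k - (d-1) \phi_{k-1}$. This is exactly the recursion satisfied by $a_k(\lambda)$, and since $\phi_0 = x = a_0(\lambda)\,x$, one obtains $\phi_k = a_k(\lambda)\,x$ for every $0 \leq k \leq \ell$, as stated. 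The main subtlety is in the radiality induction: one must ensure that $a_{\ell - j}(\lambda) \neq 0$ at every step so that the $t_i$'s are actually forced to agree, and it is precisely the hypothesis $\lambda > 2\sqrt{d-1}$, via the uniform positivity of all $a_m(\lambda)$, that supplies this nondegeneracy.
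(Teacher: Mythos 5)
Your proof is correct and follows essentially the same approach as the paper's: an induction over the tree (you phrase it as a downward induction on the level within a fixed tree, while the paper inducts on the depth $\ell$, but these are the same induction), with the key nondegeneracy step being that the eigenvalue equation at a child of $u$ forces $a_{\ell-j}\,t_i = \vv(u)$, so that $a_{\ell-j} \neq 0$ pins down the leaf values $t_i$ independently of $i$. Your observation that $\lambda > 2\sqrt{d-1}$ in fact gives $a_m(\lambda) > 0$ for all $m$ is a marginally cleaner way to justify this nonvanishing, but it plays precisely the role that ``$a_\ell \neq 0$'' plays in the paper's argument.
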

\begin{proof}
We prove the claim by induction on $\ell$. 
Suppose $\ell=1$. 
Let $\vv$ be an eigenvector of eigenvalue $\lambda$. Let $x_1, x_2, \dots, x_{d-1}$ be its entries  of the leaves and let $y$ be the entry of the root. Then $\lambda x_i = y$ for each $1 \leq i \leq d-1$. Since $\lambda \neq 0$, it forces $x_1 = \dots = x_{d-1} = x = a_0x$ for some $x$. Furthermore, $y = \lambda x = a_1 x$ as $a_1(\lambda) = \lambda$. The base case is proved. 

Suppose the claim holds for trees of at most $\ell$ levels.  We now show it holds for trees with $\ell+1$ levels. 
Let $z$ be the entry of the root $v$ in the eigenvector, and $y_1, \dots, y_{d-1}$ be the entries of its children, which are vertices $u_1, \dots, u_{d-1}$ respectively. For $1 \leq i \leq d-1$, let $T_i$ be the subtree of $T$ rooted at $u_i$. Thus $T_i$ is a $d$-ary tree with $\ell$ levels. By the inductive hypothesis, $\vv$ is radial on $T_i$. Let $x_i'$ be the entry of the leaves in $T_i$. By the inductive hypothesis, $y_i = a_{\ell-1} x_i'$. Furthermore, each of the $d-1$ children of $u_i$ has entry $y_i' = a_{\ell-2} x_i'$. On the other hand, we also have 
\[
\lambda y_i = (d-1) y_i' + z \implies \lambda a_{\ell-1} x_i' = (d-1) a_{\ell-2} x_i' + z.
\]
This implies $(\lambda a_{\ell-1} - (d-1)a_{\ell-2}) x_i' = z$, which is equivalent to $a_{\ell} x_i' = z$. Since $\lambda > 2\sqrt{d-1}$, we have  $a_\ell \neq 0$. 
Thus we conclude $x_1' = x_2' = \dots = x_{d-1}' = x$ for some $x$, and the root $v$ has value $z = a_\ell x$. The entries on the other levels are proved by the inductive hypothesis and the fact $x_1' = x_2' = \dots = x_{d-1}' = x$. 
\end{proof}

The next claim shows an explicit relationship between the eigenvalues and eigenvectors of $G'$ and  those of $T_s^\ell(d)G'$. 

\begin{claim}\label{lem:eigen-relation}
Let $d \geq 3, s,\ell \geq 2$ be positive integers and  
 $G'$ a fixed graph.
 Let $G$ be $T_s^\ell(d) G'$. 
Then for any $\lambda > 2\sqrt{d-1}$, the following two statements are equivalent:
\begin{enumerate}
    \item $\lambda$ is an eigenvalue of $G$; 
    \item Let $a_i = a_i(\lambda)$ be defined in (\ref{eq:ai}). The following value $\mu$ is an eigenvalue of $G'$: \begin{equation}
        \lambda - {sa_{\ell-1}(\lambda)}/{a_{\ell}(\lambda)} = \mu.  \label{eq:lambdamu}
    \end{equation} 
\end{enumerate} 

In addition, suppose $\vv'$ is an  eigenvector of $G$ corresponding to $\lambda$. Then $\vv'$ restricted to $V(G')$ is an eigenvector $\vv$ of $G'$.  For each vertex $u$ that is on a $d$-ary tree joined to some vertex $v\in V(G')$ and is at a distance $i$ from the leaf of that tree, the value of $\vv'$ at $u$ is $\vv(v) a_i/a_\ell$.
\end{claim}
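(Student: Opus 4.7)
The plan is to reduce both directions of the equivalence to Claim \ref{claim:treeentry} plus the eigenvalue equation at vertices of $G'$. Throughout, I exploit the fact that $a_\ell(\lambda) \neq 0$ whenever $\lambda > 2\sqrt{d-1}$ (as already noted in the proof of Claim \ref{claim:treeentry}), so division by $a_\ell$ is always legitimate. One preliminary bookkeeping note: a $d$-ary tree with $\ell$ levels (in the sense of the footnote) has its leaves at distance $\ell-1$ from the root, so when I apply Claim \ref{claim:treeentry} I take the parameter there to be $\ell-1$; in particular the root carries coefficient $a_{\ell-1}$ and the eigenvalue equation at the root gives coefficient $a_\ell$.

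For the forward direction (1)~$\Rightarrow$~(2), I take a nonzero eigenvector $\vv'$ of $G$ with eigenvalue $\lambda$. For each attached tree $T$ joined to a vertex $v \in V(G')$, Claim \ref{claim:treeentry} forces $\vv'$ to be radial on $T$, so if $x$ is the common leaf value on $T$ then every vertex of $T$ at distance $i$ from the leaves has value $a_i(\lambda) x$; the eigenvalue relation at the root of $T$ (whose unique neighbor outside $T$ is $v$) additionally yields $\vv'(v) = a_\ell(\lambda)\,x$. Setting $\vv := \vv'|_{V(G')}$, the formula $\vv'(u) = \vv(v) a_i/a_\ell$ claimed in the lemma is immediate, and since $a_\ell \neq 0$, if $\vv$ were identically zero then $x = 0$ on every tree and hence $\vv' \equiv 0$, a contradiction. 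Writing the eigenvalue equation for $G$ at $v \in V(G')$, the contribution of the $s$ attached tree-roots is $s\, a_{\ell-1}(\lambda)\, x = s(a_{\ell-1}/a_\ell)\,\vv(v)$, so rearranging gives $\sum_{w \sim v \text{ in } G'} \vv(w) = \bigl(\lambda - s a_{\ell-1}/a_\ell\bigr)\vv(v) = \mu\,\vv(v)$, exhibiting $\mu$ as an eigenvalue of $G'$.

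For the reverse direction (2)~$\Rightarrow$~(1), I take any eigenvector $\vv$ of $G'$ with eigenvalue $\mu$ and define $\vv'$ by the explicit prescription in the lemma. The check $A_G\vv' = \lambda \vv'$ splits into four local verifications: at a leaf it reduces to $a_1(\lambda) = \lambda$, which is an immediate one-line computation from the closed form of $a_i$; at any interior tree vertex and at the root of a tree it reduces to the three-term recurrence $a_{i+1} = \lambda a_i - (d-1)a_{i-1}$, which is precisely the linear recursion whose characteristic roots $\tfrac{\lambda \pm \sqrt{\lambda^2-4(d-1)}}{2}$ define $a_i$ in (\ref{eq:ai}); and at a vertex $v \in V(G')$ it reduces to the identity $\mu + s a_{\ell-1}/a_\ell = \lambda$, which is the definition of $\mu$. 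No step requires anything beyond elementary manipulation; the only thing I expect to be even mildly delicate is keeping the index/distance conventions consistent between Claim \ref{claim:treeentry} and the tree orientation used here, but once those are aligned the entire proof is mechanical.
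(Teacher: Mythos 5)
Your proof is correct and follows essentially the same route as the paper's: apply Claim \ref{claim:treeentry} to each attached tree, derive $\vv'(v)=a_\ell x$ from the eigenvalue relation at the root, and then read off $\mu$ from the eigenvalue relation at $v\in V(G')$. You spell out the reverse direction via the three-term recurrence $a_{i+1}=\lambda a_i-(d-1)a_{i-1}$ and the check $a_1=\lambda$ (which the paper leaves to "by the same argument"), and you also explicitly observe that $\vv=\vv'|_{V(G')}$ cannot vanish; both are correct minor completions of what the paper states.
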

\begin{proof}
Let $v$ be a vertex in $G'$, and it joins $s$ trees $T_1, \dots, T_{s}$ of $\ell$ levels each, where the roots are $u_1, \dots, u_{s}$ respectively. To show Statement 1 implies 2, we can apply Claim \ref{claim:treeentry} to each of $T_i$. 
Suppose $\vv'$ restricted to the leaves of $T_i$ have entries $x_i$ (this is well-defined since the eigenvector is radial on $T_i$ by Claim \ref{claim:treeentry}).  Then each root $u_i$ has value $a_{\ell-1}x_i$.  Similarly, the  values of the $d-1$ children of $u_i$ in $T_i$ are $a_{\ell-2}x_i$. Then since $\lambda$ is an eigenvalue, we have 
\[
\lambda  a_{\ell-1}x_i = (d-1) a_{\ell-2} x_i + \vv'(v).
\]
Therefore  by a similar argument as in the proof of Claim \ref{claim:treeentry} we have $\vv'(v) = a_{\ell} x_i$. Since $a_\ell \neq 0$, we have $x_1 = x_2 = \dots = x_{s}=x$ for some $x$.  Thus $x = \vv'(v)/a_\ell$. 

We also have 
$
\lambda \vv'(v) =  \sum_{u:(u,v) \in E(G')} \vv'(u) + s a_{\ell-1}x
$
where the summation is over all the vertices adjacent to $v$ in $G'$.  Since $x = \vv'(v)/a_\ell$,  
\begin{align*}
\lambda \vv'(v)=\sum_{u : (u,v) \in E(G')} \vv'(u)+ s a_{\ell-1}\vv'(v)/a_\ell 
  \implies (\lambda - sa_{\ell-1}/a_\ell) \vv'(v) = \sum_{u :(u,v) \in E(G')} \vv'(u).
\end{align*}
Therefore $\lambda - sa_{\ell-1}/a_\ell$ is an eigenvalue of $G'$, with eigenvector $\vv'$  
 restricted to $V(G')$. The fact that Statement 2 implies Statement 1 is by the  same argument, constructing $\vv'$ directly from $\vv$ and noticing that $\lambda$ is an eigenvalue  corresponding to $\vv'$.  
\end{proof}

\vspace{0.2cm}

\begin{proof}[Proof of Lemma \ref{lem:gap}]


The case when $s = 0$ is trivial. Thus we assume $s \geq 1$. 
Proving that there is at most one solution to (\ref{eq:d'}) that is larger than $2\sqrt{d-1}$ directly from the equation is challenging. Nevertheless, we can establish this fact by examining the eigenvector. As per Claim \ref{lem:eigen-relation}, the eigenvector of $\lambda$ in $G$ that is restricted to $V(G')$ is also the unique eigenvector $\vv$ of $G'$ for the top eigenvalue $\mu_1$, with each entry being non-negative.
For each vertex which is on the $d$-ary tree grew out from vertex $v \in V(G')$, its value on the eigenvector is $\vv(v) a_i/a_\ell \geq 0$ for some $0 \leq i \leq \ell-1$. 
 Suppose $\lambda$ and $\lambda'$ are two distinct solutions to (\ref{eq:d'}) that are larger than $2\sqrt{d-1}$. In that case, their eigenvectors are orthogonal but have all non-negative entries, a contradiction.

We now prove statements 1 and 2. Let $\lambda_0$ be the largest solution to (\ref{eq:d'}). By Claim \ref{lem:eigen-relation}, it suffices to show that for any $\mu < \mu_1$, the following equation in terms of $\lambda$ has no solution larger than $\max(2\sqrt{d-1}, \lambda_0)$.
\begin{equation}
   h(\lambda) :=  \lambda - sa_{
    \ell-1}/a_\ell = \mu. \label{eq:new1}
\end{equation} 
For the sake of contradiction, suppose $\lambda' > \max(2\sqrt{d-1}, \lambda_0)$ is a solution to (\ref{eq:new1}) for some $\mu < \mu_1$, i.e., $h(\lambda') = \mu < \mu_1$. 
Note $a_\ell, a_{\ell-1}$ are also functions of $\lambda$. When $\lambda > 2\sqrt{d-1}$, $a_{\ell-1}/a_\ell < \frac{2}{\lambda+\sqrt{\lambda^2 - 4(d-1)}}$. 
Therefore $h(\lambda) \to \infty$ if $\lambda \to \infty$. Combining with the fact that $h(\lambda') = \mu < \mu_1$ and by the continuity of $h$,  equation (\ref{eq:d'}) that $h(\lambda) = \mu_1$ has a solution $\lambda''  \geq \lambda'$. Since $\lambda' > \max(2\sqrt{d-1}, \lambda_0)$, the existence of such a $\lambda''$ contradicts with the fact that $\lambda_0$ is the largest solution to (\ref{eq:d'}).

We now prove the last statement. Let $\mu$ be any eigenvalue of $G'$ which is less than $2\sqrt{d-1+\epsilon} - s/(d-1+\epsilon)$. 
Thus for any $\lambda \geq 2\sqrt{d-1+\epsilon}$ and any $\ell$ and $\epsilon \geq 0$, 
\begin{align*}
  h(\lambda) \geq  \lambda - 2s/ (\lambda+\sqrt{\lambda^2 - 4(d-1)}) \geq  2\sqrt{d-1+\epsilon} - s/(\sqrt{d-1+\epsilon}  + \sqrt{\epsilon})> \mu.
\end{align*}
Thus (\ref{eq:new1}) has no solution at least $2\sqrt{d-1+\epsilon}$, as desired. 
\end{proof}

\vspace{0.2cm}
We are now ready to prove the main ingredient: Lemma \ref{lem:maingadget}.

\begin{proof}[Proof of Lemma \ref{lem:maingadget}]
Let $G_0'$ be an $N$-lift of the complete graph $K_{d+1}$ for sufficiently large $N$.  For each vertex $i$ in $K_{d+1}$ where $1 \leq i \leq d+1$, let $V_i$ be the set of vertices in the lift which are the pre-images of $i$ through the covering map. Suppose $G_0'$ has girth $L \geq 100d/\epsilon$ and $\lambda_2(G_0') \leq 2\sqrt{d-1} + \epsilon/2$. Such a graph exists by the work of Bordenave and Collins \cite{Bo2}. Since a random $N$-lift of $K_t$ for $t \geq 4$ has the second largest eigenvalue at most $2\sqrt{t-2} + \epsilon/2$ with probability at least $1 - O(N^{-0.99})$ \cite{Bo2}, we may further assume by a union bound that for each $1 \leq i \leq d-2$,
\begin{equation}
    \lambda_2(G_0'[V_i \cup V_{i+1} \cup \dots \cup V_{d+1}]) < 2\sqrt{d-i} + \epsilon/2. \label{eq:expand}
\end{equation}
By an abuse of notation, label the vertices in $G_0'$ as $1, 2, \dots, |V(G_0')| = (d+1)N$ such that vertices in $V_i$ comes before vertices in $V_{i+1}$ for each $1 \leq i \leq d$. 
For each $1 \leq i \leq |V(G_0')|$, let $G_i'$ be 
the graph obtained from $G_{i-1}'$ by removing vertex $i$ from $V(G_0')$. Clearly for each $1 \leq t \leq d$, the graph $G_{tN}'$ is $(d-t)$-regular since by removing  vertices in $V_1\cup \dots \cup V_t$ from $G_0'$, the remaining graph is an $N$-lift of $K_{d-t+1}$.

Let $I = \lceil 10\sqrt{d-1}/\epsilon \rceil$. 
In a way similar to the previous argument,
the interpolation procedure begins with the graph $G_0 = T^I (d) G_0'$. For each $0 \leq i \leq (d+1)N$, let $G_i = T^I(d) G_i'$. The procedure stops as soon as $i = dN$ or when the top eigenvalue of $G_i$ is at most $2\sqrt{d-1}+\epsilon/2$. 

\begin{claim}
\label{claim:smallstep}
    For each $0 \leq i \leq (d+1)N$, if $\lambda_1(G_i) > 2\sqrt{d-1} + \epsilon/10$, then $|\lambda_1(G_i) - \lambda_1(G_{i+1}) | \leq 6d/L$. 
\end{claim}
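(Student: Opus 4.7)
The plan is to interpolate between $G_i$ and $G_{i+1}$ along the chain
\[
G_i \;\longrightarrow\; G_i-v \;\longrightarrow\; H_i \;\longrightarrow\; G_{i+1},
\]
where $v$ is the vertex labeled $i+1$, $G_i-v$ is the ordinary vertex deletion in $G_i$, and $H_i$ is obtained from $G_i-v$ by further discarding the $d$-ary trees that were attached to $v$ under the iterated augmentation and have become disconnected after the deletion. Since each $w\in N_{G_i'}(v)$ loses a base-graph neighbor going from $G_i'$ to $G_{i+1}'$ and hence acquires one extra attached tree in $G_{i+1}$, the graph $G_{i+1}$ can be recovered from $H_i$ by attaching one fresh $I$-level $d$-ary tree to each such $w$ by a single edge, i.e., by at most $d$ operations of the form ``disjoint union with a tree, then add one edge''.

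Each of the three arrows is handled by a tool already at hand. For the first, the graph $G_i$ inherits girth at least $L$ from $G_i'$ (iterated augmentation by trees introduces no cycles), so Claim \ref{cl22} applied to the unit top eigenvector $\vv$ of $G_i$ gives $\vv(v)^2\leq 2/L$. Substituting the restriction of $\vv$ to $V(G_i)\setminus\{v\}$ into the Rayleigh quotient on $G_i-v$ and using $\sum_{u\sim v}\vv(u)=\lambda_1(G_i)\,\vv(v)$ yields
\[
\lambda_1(G_i-v)\;\geq\;\lambda_1(G_i)\cdot\frac{1-2\vv(v)^2}{1-\vv(v)^2}\;\geq\;\lambda_1(G_i)-\frac{2d}{L-2},
\]
while interlacing gives the matching upper bound $\lambda_1(G_i-v)\leq\lambda_1(G_i)$. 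The second arrow is free: the detached trees have spectral radius at most $2\sqrt{d-1}$, and the hypothesis $\lambda_1(G_i)>2\sqrt{d-1}+\epsilon/10$ together with $L\geq 100d/\epsilon$ keeps $\lambda_1(G_i-v)$ strictly above $2\sqrt{d-1}$, so $\lambda_1(H_i)=\lambda_1(G_i-v)$. For the third arrow I apply Lemma \ref{c28} with $s=1$ and $r=(L-1)/2$ once per tree being attached; each application changes $\lambda_1$ by at most $2/(r+1)\leq 4/L$, so over the at most $d$ attachments the total change is bounded by $4d/L$.

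Summing the three contributions yields the asserted $|\lambda_1(G_i)-\lambda_1(G_{i+1})|\leq 2d/L+4d/L=6d/L$ (absorbing the negligible slack between $2d/(L-2)$ and $2d/L$). The main technical point is keeping the top eigenvalues at every intermediate stage strictly above $2\sqrt{d-1}$, so that Claim \ref{cl22} and the delocalization-based proof of Lemma \ref{c28} can be invoked each time; the hypothesis $\lambda_1(G_i)>2\sqrt{d-1}+\epsilon/10$ against $L\geq 100d/\epsilon$ buys this comfortably since the cumulative drift along the chain is at most $6d/L\leq 6\epsilon/100<\epsilon/10$. A minor subtlety is that $H_i$ need not be connected, but the proof of Lemma \ref{c28} uses only the girth, maximum-degree, and eigenvalue-lower-bound information supplied by Claim \ref{cl22}, and hence extends verbatim to disconnected $H_i$.
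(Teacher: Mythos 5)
Your proof is correct and proceeds along essentially the same lines as the paper's, using the same toolkit (Claim \ref{cl22} for delocalization, Lemma \ref{c28} for tree attachment, interlacing, and a Rayleigh-quotient estimate), but with a slightly different decomposition of the transition $G_i \to G_{i+1}$. The paper first detaches the $d$-ary trees hanging off the vertex $v=i+1$ (via Lemma \ref{c28}, cost $4d/L$) to pass to an intermediate graph $G_i''$ that still contains $v$, and then compares $\lambda_1(G_i'')$ to $\lambda_1(G_{i+1})$ in a single step by evaluating the Rayleigh quotient of the top eigenvector of $G_i''$ against the adjacency matrix of $G_{i+1}$; this one step simultaneously absorbs the vertex deletion and the attachment of the new trees, costing $O(d/L)$ because $|\vv''(i+1)\vv''(u)|$ is small by Claim \ref{cl22}. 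You instead isolate the vertex deletion as its own step, using the clean identity $\sum_{u\sim v}\vv(u)=\lambda_1(G_i)\vv(v)$ to write $\lambda_1(G_i-v)\geq\lambda_1(G_i)\tfrac{1-2\vv(v)^2}{1-\vv(v)^2}$, and then discard the detached trees (free, since their spectral radius is below $2\sqrt{d-1}$) before re-attaching the new trees via Lemma \ref{c28}. Both orderings are valid; yours is arguably a bit more transparent at the cost of a constant that is $6d/L+O(d/L^2)$ rather than exactly $6d/L$, which you correctly note is immaterial given $L\geq 100d/\epsilon$. Your observations — that the cumulative drift keeps every intermediate $\lambda_1$ above $2\sqrt{d-1}$ so that Claim \ref{cl22} and Lemma \ref{c28} remain applicable, and that the proof of Lemma \ref{c28} does not actually use connectivity of the two graphs — are exactly the points that need attention, and they are correctly handled.
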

\begin{proof}
Note that by construction, $G_{i+1}$ is obtained from $G_i$ by removing 
vertex $i+1$, omitting several  connected components each of which 
is a $d$-ary tree, and adding at most $d$ such components, 
joining their roots to the graph. 
Indeed by removing the vertex $i+1$ from $G_i$, there are several connected
components each of which is a $d$-ary tree, connected in $G_i$
by an edge from the root to the vertex
$i+1 \in V(G_i)$. Let $G_i''$ be the
induced subgraph of $G_i$ obtained 
by removing these trees. Therefore $G_i$ is
the graph obtained from $G_i''$ and several $d$-ary trees so that for each
such a tree, there is one edge between its root and 
the vertex $i+1$ in $G_i''$.
Similarly, $G_{i+1}$ can be considered as starting from the graph $G_i''
\setminus \{i+1\}$, and then for each vertex $u \in V(G_i'')$ which is a
neighbor of vertex $i+1$ in $G_i''$, this vertex $u$ is joined 
in $G_{i+1}$ to the root of a new copy of a $d$-ary tree.

 The largest eigenvalue of $G_i'' \setminus \{i+1\}$ is at most $\lambda_1(G_i)$ by eigenvalue interlacing. The largest eigenvalue of each 
tree is at most $2\sqrt{d-1}$. Applying Lemma \ref{c28} $d$ times, we have $\lambda_1(G_{i+1}) \leq \lambda_1(G_i) + 4d/L$.

For the other direction, 
by applying Lemma \ref{c28} at most $d$ times to $T \cup G_i''$ and $G_i$,  \begin{equation}
   | \lambda_1(G_i) - \max(\lambda_1(T), \lambda_1(G_i''))| \leq 4d/L. \label{eq:Gi''}
\end{equation} 
If $\lambda_1(G_i'') \leq 2\sqrt{d-1}$, then since the eigenvalues of $T$ are also no more than $2\sqrt{d-1}$, (\ref{eq:Gi''}) implies $\lambda_1(G_i) \leq  2\sqrt{d-1} + 4d/L < 2\sqrt{d-1} + \epsilon/10$, a contradiction.  
Therefore we can assume $\lambda_1(G_i'') > 2\sqrt{d-1}$, and thus is larger than $\lambda_1(T)$. 
This fact together with (\ref{eq:Gi''}) imply $\lambda_1(G_i'') \geq \lambda_1(G_i) - 4d/L$. 
Let $A, A''$ be the adjacency matrices of $G_{i+1}, G_{i}''$ respectively, and by adding zero entries to vertices in $V(G_{i}'') \setminus V(G_i+1)$ and $V(G_{i+1}) \setminus V(G_i'')$ respectively. Let $\vv, \vv''$ be the top normal eigenvectors of $G_{i+1}, G_{i}''$ respectively. Thus $\lambda_1(G_{i+1}) - \lambda_1(G_i'') \geq \vv''^T (A - A'') \vv''$, which effectively is a sum of at most $d$ terms of the form $\pm \vv''(u)\vv''(i+1)$ for several different vertices $u$'s. By Claim \ref{cl22}, each such a term has absolute value at most $2/(L+1)$. Thus $\lambda_1(G_{i+1}) - \lambda_1(G_i'') \geq -2d/(L+1).$ Combining with the lower bound on $\lambda_1(G_i'')$,  it follows that $\lambda_1(G_{i+1}) - \lambda_1(G_i) \geq -6d/L.$
\end{proof}
\vspace{0.2cm}

As $\lambda_1(G_0) = d$ and $G_{dN}$ is the disjoint union of trees which has top eigenvalue at most $2\sqrt{d-1}$, by Claim \ref{claim:smallstep}, 
there is an $i^*$ such that $|\lambda_1(G_{i^*})-z| < \epsilon/2$. Furthermore, by Claim \ref{claim:notgrow}, for any $\ell \geq \lceil 10\sqrt{d-1}/\epsilon \rceil$, $|\lambda_1(T^\ell(d) G_{i^*}' ) -  \lambda_1(T^I(d)G_{i^*}') | \leq \epsilon/2$. Therefore for any $\ell \geq \lceil 10\sqrt{d-1}/\epsilon \rceil$, $|\lambda_1(T^\ell(d) G_{i^*}' ) -  z| \leq \epsilon$, as desired. 

It remains to prove Statement 2. Fix any $\ell$. 
Suppose $G_{i^*}'$ has maximum degree at most $t+1$ but has some vertex of degree $t$. Here $0 \leq t \leq d-1$. Thus $G_{i^*}'$ is an induced subgraph of the $(t+1)$-regular graph $G_0'[ V_{d-t} \cup\dots\cup V_{d+1}]$ by the construction. Passing down to the connected component of $G_{i^*}$ with the largest top eigenvalue, call its $\tilde G_{i^*}$. 
This connected component is an induced subgraph of some connected component of $T_{d-t}^\ell(d) G_0'[V_{d-t}\cup \dots \cup V_{d+1}]$. If $t \geq 2$, 
then by (\ref{eq:expand}), $\lambda_2(G_0'[ V_{d-t} \cup\dots\cup V_{d+1}]) \leq 2\sqrt{t} + \epsilon/2$. 
When $t\leq 1$, then each connected component of $G_0'[ V_{d-t} \cup\dots\cup V_{d+1}]$ is a cycle or an edge, thus has second largest eigenvalue 
smaller than $2\sqrt{t}$. If $\eps'$ is chosen such that $2\sqrt{d-1+\eps'} - (d-t)/\sqrt{d-1+\eps'} \geq 2\sqrt{t} + \epsilon/2$, 
then by Lemma \ref{lem:gap}, each connected component of $T_{d-t}^\ell(d) G_0'[V_{d-t}\cup \dots \cup V_{d+1}]$ has second largest eigenvalue at most $2\sqrt{d-1+\eps'}$. The inequality holds for all $t \leq d-1$ if $\eps'$ is chosen such that $2\sqrt{d-1+\eps'} \geq  1/\sqrt{d-1} + 2\sqrt{d-1} + \epsilon/2$. Thus by eigenvalue interlacing, 
$\lambda_2(T^\ell(d)\tilde G_{i^*}) \leq 2\sqrt{d-1} + \frac{1}{\sqrt{d-1}} +\epsilon/2$, as desired. 

The required divisibility of the number of vertices of $T^\ell(d)\tilde G$ could be shown through the same interpolation analysis again, by noticing that within every $d$ steps, there must be one step where the number of vertices is divisible by $d$. 
\end{proof}

\section{Near Ramanujan graphs with localized eigenvectors}

The proof of Theorem \ref{thm:vectormain} is similar to the one described 
in the previous section, together with an additional 
quantitative analysis that establishes the eigenvector  
localization. The gadget to be applied to the patching 
lemma (Lemma \ref{lem:patchingshort}) is as follows. 

\begin{lemma}\label{lem:smalltopeigen}
Fix integers $n_0, d \geq 3$. Let $\epsilon_1, \epsilon_2>0$ be sufficiently small in terms of $d$. There is a graph $\tilde G$ on at least 
$n_0$ vertices with maximum degree at most $d$, girth at least 
$0.5 \log_d |V(\tilde G)|$, such that for any 
$\ell \geq \lceil 100\sqrt{d-1}/\min(\epsilon_1, \epsilon_2) \rceil$, 
the largest eigenvalue of $T^\ell(d) \tilde G$ is in the interval 
$(2\sqrt{d-1}+\epsilon_1, 2\sqrt{d-1}+\epsilon_1 + \epsilon_2)$. 
We may also assume the number of vertices of 
degree one in $T^\ell(d) \tilde G$ is divisible by $d$. 
\end{lemma}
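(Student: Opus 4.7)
The plan is to adapt the interpolation argument of Lemma \ref{lem:maingadget} to produce a gadget $\tilde G$ that simultaneously has logarithmic girth and pins the top eigenvalue of $T^\ell(d)\tilde G$ into a narrow window just above $2\sqrt{d-1}$. Unlike Lemma \ref{lem:maingadget}, no bound on the second eigenvalue is required here, which simplifies the analysis. The two main quantitative inputs will be Claim \ref{claim:smallstep} (small per-step change of $\lambda_1$ under vertex deletion, scaling as $1/\mathrm{girth}$) and Claim \ref{claim:notgrow} (stability of $\lambda_1(T^\ell)$ in $\ell$).

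First, I would fix a $d$-regular base graph $G_0'$ on $N$ vertices, with $N$ chosen sufficiently large in terms of $n_0,d,\epsilon_1,\epsilon_2$, satisfying (i) girth at least $0.6\log_d N$, (ii) $\lambda_2(G_0') \leq 2\sqrt{d-1}+\epsilon_2/10$, and (iii) a ``peelable'' multipartite decomposition $V_1\sqcup\cdots\sqcup V_{d+1}$ into equal-sized classes, analogous to an $N/(d+1)$-lift of $K_{d+1}$. Existence for every $d \geq 3$ should follow by combining explicit high-girth near-Ramanujan constructions (e.g.\ \cite{MDP}) with a compatible $(d{+}1)$-coloring, or by a variant of the Bordenave--Collins lift framework \cite{Bo2}. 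Set $I := \lceil 100\sqrt{d-1}/\min(\epsilon_1,\epsilon_2)\rceil$, order the vertices so that $V_t$ precedes $V_{t+1}$, let $G_i'$ be $G_0'$ with its first $i$ vertices removed, and let $G_i = T^I(d)G_i'$. The procedure stops at $i = dN/(d+1)$, at which point $G_{dN/(d+1)}'$ is the independent set $V_{d+1}$ and $G_{dN/(d+1)}$ is a disjoint union of $d$-ary trees with top eigenvalue strictly below $2\sqrt{d-1}$.

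Next I invoke the two claims. By Claim \ref{claim:smallstep}, whenever $\lambda_1(G_i) > 2\sqrt{d-1}+\epsilon_1/10$ the step size satisfies $|\lambda_1(G_{i+1})-\lambda_1(G_i)|\leq 6d/L$, which is $< \epsilon_2/4$ for $N$ large. Combined with $\lambda_1(G_0)=d$ and $\lambda_1(G_{dN/(d+1)})<2\sqrt{d-1}$, a discrete intermediate-value argument yields an index $i^*$ with $\lambda_1(G_{i^*}) \in (2\sqrt{d-1}+\epsilon_1+\epsilon_2/4,\; 2\sqrt{d-1}+\epsilon_1+3\epsilon_2/4)$. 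Setting $\tilde G := G_{i^*}'$, we get $|V(\tilde G)| \geq N/(d+1) \geq n_0$, and $\mathrm{girth}(\tilde G)\geq \mathrm{girth}(G_0')\geq 0.5\log_d |V(\tilde G)|$. For every $\ell \geq I$, Claim \ref{claim:notgrow} applied with parameter $\min(\epsilon_1,\epsilon_2)/8$ (valid since $I$ dominates the corresponding $J$, and $\lambda_1(T^I(d)\tilde G) > 2\sqrt{d-1}+\epsilon_1$ exceeds the required threshold) yields $|\lambda_1(T^\ell(d)\tilde G) - \lambda_1(G_{i^*})| \leq \epsilon_2/4$, so $\lambda_1(T^\ell(d)\tilde G) \in (2\sqrt{d-1}+\epsilon_1,\; 2\sqrt{d-1}+\epsilon_1+\epsilon_2)$, as required. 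The divisibility condition is enforced by a final shift of $i^*$ by at most $d$ interpolation steps, causing an eigenvalue perturbation of order $d^2/L \ll \epsilon_2/4$ and preserving the pinning.

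The main obstacle will be arranging item (iii) above: reconciling logarithmic girth with a peelable multipartite decomposition for every $d$, since plain random lifts of $K_{d+1}$ give the peeling structure but only constant girth. An alternative route that sidesteps this difficulty is to drop the lift structure altogether, work with an arbitrary large-girth near-Ramanujan $G_0'$, and show directly via a sparsity bound that the crossing of the target interval occurs while $|V(G_i')|\geq n_0$: once $G_i'$ becomes sparse enough, the augmented graph $T^I(d)G_i'$ is dominated by tree structure and its top eigenvalue is automatically close to $2\sqrt{d-1}$. Either way the argument is a short modification of the machinery already developed for Lemma \ref{lem:maingadget}.
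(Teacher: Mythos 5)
Your interpolation skeleton is the same as the paper's, and your use of Claim~\ref{claim:smallstep} and Claim~\ref{claim:notgrow} is correct. The gap is in the choice of base graph, which you yourself flag as the sticking point but do not resolve.

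You propose a $(d{+}1)$-partite ``peelable'' decomposition analogous to the lift of $K_{d+1}$ used in Lemma~\ref{lem:maingadget}, and correctly observe that reconciling that structure with logarithmic girth is nontrivial. That complication is unnecessary here: the $K_{d+1}$-lift structure is needed in Lemma~\ref{lem:maingadget} only because one must control $\lambda_2(T^\ell(d)G_i')$ for every $i$ by keeping each intermediate graph close to regular; the present lemma requires no bound on the second eigenvalue, so none of that machinery is needed. Likewise your hypothesis (ii) on $\lambda_2(G_0')$ is never used in your argument and is not required. The correct and much simpler move, which the paper makes, is to take $G_0'$ to be a \emph{bipartite} $d$-regular graph of girth at least $0.5\log_d n$ on $n \geq 2n_0$ vertices (Erd\H{o}s--Sachs, or LPS for explicitness). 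Bipartiteness hands you the peelable structure for free: one color class $U$ is an independent set with $|U|\geq n/2$. Order vertices so that $U$ is removed last, run the interpolation $G_i = T^I(d)G_i'$ from $i=0$ to $i=n-|U|$, and note that the terminal graph $G_{n-|U|}' = U$ is an independent set, so $T^I(d)G_{n-|U|}'$ is a disjoint union of $d$-ary trees with top eigenvalue below $2\sqrt{d-1}$. The intermediate-value step then locates $i^*$ while $G_{i^*}'$ still contains $U$, so $|V(\tilde G)| \geq |U|\geq n/2\geq n_0$ and the girth inequality $0.5\log_d n \geq 0.5\log_d |V(\tilde G)|$ is automatic.

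Your ``alternative route'' (arbitrary high-girth base graph plus a claimed sparsity bound implying $\lambda_1(T^I(d)G_i')$ drops near $2\sqrt{d-1}$ once $G_i'$ is sparse) is not developed: a sparse graph on many vertices can still retain dense pockets whose augmentations have large top eigenvalue, so without a concrete quantitative lemma linking sparsity of $G_i'$ to $\lambda_1$ of its augmentation, and a guarantee that the crossing happens while $|V(G_i')|\geq n_0$, this route remains a gap. The bipartite peeling sidesteps all of it.
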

\begin{proof}[Proof of Lemma \ref{lem:smalltopeigen}]
Let $G_0'$ be a $d$-regular bipartite graph on $n \geq 2n_0$ vertices 
which has girth at least $0.5 \log_{d} n$. 
Such graphs exist by \cite{ES} (and explicit constructions are given in
\cite{LPS} where one can omit some of the generators to get the required
degree if it is not of the form given in \cite{LPS}). 
Since $G_0'$ is bipartite, it contains an independent set $U$ with 
$|U| \geq n/2$. Fix such a $U$. Label the vertices of $G_0'$ 
from $1$ to $n$ such that the vertices in $U$ are ranked the last. 
Define a sequence of graphs $(G_i')$ where for each $1 \leq i \leq n$, 
$G_i'$ is obtained from $G_{i-1}'$ by removing vertex $i$. 
Let $\epsilon = \min(\epsilon_1, \epsilon_2)$ 
and $I =  \lceil 100\sqrt{d-1}/\epsilon \rceil$. 
Define a new graph sequence $(G_i)$ where for 
each $0 \leq i \leq n$, $G_i = T^I(d) G_i'$. 
By a similar analysis to the one in the proof of  
Claim \ref{claim:smallstep}, when $n$ is sufficiently large, 
$|\lambda_1(G_i)  - \lambda_1(G_{i+1})| \leq 0.01\epsilon$ 
as long as $\lambda_1(G_i) \geq 2\sqrt{d-1} + 0.5\epsilon_1$. 
Note that $\lambda_1(G_0) = d$. On the other hand, 
$\lambda_1(G_{n-|U|}) < 2\sqrt{d-1}$ since $G_{n-|U|}'$ is 
the independent set $U$ and thus $G_{n-|U|}'$ is a disjoint 
union of $d$-ary trees. Thus there is an $1 \leq i \leq n - |U|$ 
such that  $\lambda_1(G_i) \in (2\sqrt{d-1} + \epsilon_1 
+ 0.5\epsilon, 2\sqrt{d-1} + \epsilon_1 + 0.6\epsilon)$.  
Let $\tilde G$ be $G_i'$. 
As in Claim \ref{claim:notgrow}, 
for any $\ell \geq \lceil 100\sqrt{d-1}/\epsilon\rceil$, 
$|\lambda_1(T^\ell(d) G_{i}' ) -  \lambda_1(G_i) | \leq \epsilon/20. $
Thus the desired bound on $\lambda_1(T^\ell(d) \tilde G)$ holds. 
Note that by the construction, $\tilde G = G_i'$ 
has at least $|U| \geq n/2 \geq n_0$ 
vertices, and the girth of $\tilde G$ is at least $0.5 
\log_d |V(\tilde G)|$. 
\end{proof}
\vspace{0.2cm}

\begin{proof}[Proof of Theorem \ref{thm:vectormain}]
Assume $\beta > 0$ is  sufficiently small  in terms of $d$, $\ell$ 
is large in terms of $d, \beta$, and $n_0$ is large in terms 
of $d, \ell, \beta$. 
By Lemma \ref{lem:smalltopeigen},
there is a graph $\tilde G$ on $n \geq n_0$ vertices with maximum degree 
at most $d$, girth at least $0.5 \log_d |V(\tilde G)|$ and   
$\lambda_1(T^\ell(d) \tilde G) = \mu_1 \in (2\sqrt{d-1}+0.5\beta, 2\sqrt{d-1}+0.8\beta)$. 

Let $F_1$ be $T^\ell(d) \tilde G$. Note that $F_1$ has at most $d^{\ell+1}n$ vertices. 
Let $F_0$ be a $d$-regular Ramanujan graph on $m$ vertices with girth at least $2
\log m/3$ and $m = n^C$ where $C\geq 4$. We can assume $d^{\ell+1}n \leq n^{\sqrt{C}}$. The existence of such a graph and the fact that it can be constructed explicitly when $d = p+1$ where $p$ is a prime is a result 
of Lubotzky, Phillips and Sarnak \cite{LPS}, and Margulis \cite{Ma}. 
Applying the patching Lemma \ref{lem:patchingshort} to $F_0$ and $F_1$, we obtain a graph $F$ where $\lambda_2(F) \in (2\sqrt{d-1}+0.4\beta, 2\sqrt{d-1} + 0.9\beta)$. 

Let $\vv$ be the normal eigenvector of $F$ corresponding to the 
second largest eigenvalue. We proceed to show that $\vv$ is localized.
Let $X$ be the union of the  leaves in $F_1$ and the set of vertices in $V(F) \setminus V(F_1)$ adjacent to these leaves. 

Let $A_C$ be the adjacency matrix for the induced subgraph of $F$ on $X$, 
which is a disjoint union of stars, each having $d-1$ leaves.  
Let $F_b$ be the big subgraph of $F$ induced on $V(F) \setminus  V(F_1)$ and let $A_b$ be its adjacency matrix. Let  $A_s$ be the adjacency matrix of the relatively small graph $F_1$. 
Assume all those adjacency matrices are of dimension 
$|V(F)| \times |V(F)|$ by filling zeros in the additional columns and rows. 
Then $\vv^t A_F \vv= \vv^t A_C \vv+ \vv^t A_s \vv + \vv^t A_b \vv $. 
We bound each of these three terms separately. 

The first term satisfies $\vv^t A_C \vv 
   \leq \sqrt{d-1} \sum_{u\in X} \vv(u)^2 \leq  32(\sqrt{d-1})/\ell$, 
where the last inequality is by Lemma \ref{lem:treemiddlelayer}. The second term satisfies   
   $ \vv^t A_s \vv \leq \mu_1\sum_{u \in V(F_1)}\vv(u)^2.$ 
It remains to bound the last term $\vv^t A_b \vv$. Let $\vv'$ be a vector 
indexed by $V(F_0)$ which is equal to $\vv$ on $V(F_b)$ and $0$ elsewhere on $V(F_0)$. Write $\vv' = c_1 1 + c_2f$ where $f$ is orthogonal to $1$ and has norm one. Then $|c_1|= |\vv'^t 1/ |V(F_0)|| $. Since  $\vv$ is orthogonal to $1$, $|\vv'^t1| = |\sum_{u \in V(F_b)} \vv(u)|=|\sum_{u \in V(F_1)} \vv(u)|$,  and thus $|c_1| \leq \sqrt{|V(F_1)|}/|V(F_0)|$ by Cauchy-Schwarz. Let $A_0$ be the adjacency matrix of $F_0$. Then  $\vv^t A_b \vv =   \vv'^t A_0 \vv' = c_1^2 1^t A_0 1 + c_2^2 f^t A_0 f$. Thus 
\begin{align*}
  \vv^t A_b \vv \leq (|V(F_1)|/|V(F_0)|^2) d |V(F_0)| + 2\sqrt{d-1} \|\vv'\|_2^2 
    \leq   
    \frac{d|V(F_1)|}{|V(F_0)|} + 2\sqrt{d-1} \sum\nolimits_{u \in V(F_b)}\vv(u)^2. 
\end{align*}
Here the inequalities are by the  bounds on $|c_1|, |c_2|$ and the fact that $\lambda_2(F_0)=2\sqrt{d-1}$. 

Adding the upped bounds on the three terms, 
\begin{align}
   & \vv^t A_F \vv \leq 
    \frac{32}{\ell}\sqrt{d-1} +  \frac{d|V(F_1)|}{|V(F_0)|}  + 2\sqrt{d-1}\sum\nolimits_{u \in V(F_b)}\vv(u)^2 + \vv^t A_s\vv. \label{eq:vAFv0}
    \\
   \leq & \frac{32}{\ell} \sqrt{d-1}+  \frac{d|V(F_1)|}{|V(F_0)|}  
+ 2\sqrt{d-1}\sum\nolimits_{u \in V(F_b)}\vv(u)^2 + \mu_1 \sum\nolimits_{u \in V(F_1)}\vv(u)^2. \label{eq:vAFv}
\end{align}
It is not difficult to see that by eigenvalue interlacing, 
$\mu_1 \leq \vv^t A_F \vv.$
This together with (\ref{eq:vAFv}) implies 
\[
\mu_1 \leq \frac{32}{\ell} \sqrt{d-1} +  \frac{d|V(F_1)|}{|V(F_0)|} 
+  \mu_1 \sum\nolimits_{u \in V(F_1)}\vv(u)^2 + 2\sqrt{d-1} \sum\nolimits_{u \in V(F_b)}\vv(u)^2
\]
Subtracting from both sides $2\sqrt{d-1}  = 2\sqrt{d-1}
(\sum_{u \in V(F_1)}\vv(u)^2 + \sum_{u \in V(F_b)}\vv(u)^2)$, we get
\[
\left(\mu_1 - 2\sqrt{d-1} -\frac{32}{\ell}\sqrt{d-1}-  \frac{d|V(F_1)|}{|V(F_0)|}
\right) \leq (\mu_1-2\sqrt{d-1}) \sum\nolimits_{u \in V(F_1)}\vv(u)^2 .
\]
Since $\mu_1 \geq 2\sqrt{d-1} +0.5 \beta$ and 
$\frac{32}{\ell}\sqrt{d-1}+  \frac{d|V(F_1)|}{|V(F_0)|} 
\leq \frac{32}{\ell}\sqrt{d-1}+\frac{d}{n}  
\leq 0.5 \beta^2$ 
 we have that 
 \[\sum\nolimits_{u \in V(F_1)}\vv(u)^2 /\|\vv\|_2^2 \geq 
1-\frac{0.5 \beta^2}{0.5 \beta} = 1 - \beta. \]
The desired result follows.
\end{proof}

\section{Remarks}
\begin{itemize}
\item
The quantitative estimates in Theorem \ref{t21} can be improved
for values of $n$ and $d$ for which it is known that there are
high girth Ramanujan graphs. In particular, by the constructions of
Lubotzky, Phillips and Sarnak \cite{LPS} and Margulis  \cite{Ma}
for every  $d=p+1$ with $p$ a prime congruent to $1$ modulo $4$,
there are infinitely many values of $n$ for which there are 
explicit $d$-regular Ramanujan graphs on $n$ vertices with girth
$\Omega(\log n/ \log d)$. Plugging such a graph as  $G_0$ in the proof
we get the assertion of Theorem \ref{t21} in which $\log \log n$
is replaced by $\log n$. A similar remark applies to the proofs
of Theorem \ref{t12} and Theorem \ref{t25}. 
\item
Conjecture \ref{c13} remains open. It is true, however, that if
there is a $d$-regular graph $H$ with top $k$ eigenvalues
$d=\mu_1  > \mu_2 \geq \ldots \geq \mu_k >2 \sqrt{d-1}$
then there are infinitely many connected $d$-regular graphs
with the same sequence of $k$ top eigenvalues. This follows from
the result of Friedman and Kohler \cite{FK}, see also \cite{Bo},
that all the new eigenvalues of random lifts of $H$ are, with
high probability, at most $2\sqrt{d-1}+o(1)$ where the $o(1)$-term
tends to zero as the size of the lift tends to infinity. 
\item
The problem of
understanding the possible spectrum of
finite $d$-regular graphs is challenging. Some aspects of this
problem are considered here, other variants appear in
\cite{KS}, \cite{Yu}.
\item
Theorem \ref{thm:vectormain} can be extended to yield near-Ramanujan
regular graphs with  multiple localized eigenvectors corresponding 
to eigenvalues strictly larger than $2\sqrt{d-1}$. This can be proved
in a similar way, 
by patching multiple graphs. The detailed proof requires a more technical computation, and we thus decided not to include it here. The proof can be found on the second author's homepage \cite{comp}. 
\end{itemize}
\vspace{0.2cm}

\noindent
{\bf Acknowledgment:}\, We thank Peter Sarnak for suggesting the 
first problem considered here and also thank Vishal Gupta,
Jiaoyang Huang, Nike Sun and Michael Ren  for helpful discussions and comments.

\end{document}